\providecommand{\U}[1]{\protect\rule{.1in}{.1in}}
\newcounter{exer}
\theoremstyle{definition}
\newtheorem{theo}{Theorem}[section]
\newenvironment{theorem}[1][]
{\begin{theo}[#1]\begin{leftbar}}
{\end{leftbar}\end{theo}}
\newtheorem{lem}[theo]{Lemma}
\newenvironment{lemma}[1][]
{\begin{lem}[#1]\begin{leftbar}}
{\end{leftbar}\end{lem}}
\newtheorem{prop}[theo]{Proposition}
\newenvironment{proposition}[1][]
{\begin{prop}[#1]\begin{leftbar}}
{\end{leftbar}\end{prop}}
\newtheorem{defi}[theo]{Definition}
\newenvironment{definition}[1][]
{\begin{defi}[#1]\begin{leftbar}}
{\end{leftbar}\end{defi}}
\newtheorem{remk}[theo]{Remark}
\newtheorem{coro}[theo]{Corollary}
\newenvironment{corollary}[1][]
{\begin{coro}[#1]\begin{leftbar}}
{\end{leftbar}\end{coro}}
\newtheorem{conv}[theo]{Convention}
\newtheorem{quest}[theo]{Question}
\newenvironment{question}[1][]
{\begin{quest}[#1]\begin{leftbar}}
{\end{leftbar}\end{quest}}
\newtheorem{warn}[theo]{Warning}
\newtheorem{conj}[theo]{Conjecture}
\newtheorem{exam}[theo]{Example}
\newenvironment{example}[1][]
{\begin{exam}[#1]\begin{leftbar}}
{\end{leftbar}\end{exam}}
\newtheorem{exmp}[exer]{Exercise}
\let\sumnonlimits\sum
\let\prodnonlimits\prod
\let\cupnonlimits\bigcup
\let\capnonlimits\bigcap
\renewcommand{\sum}{\sumnonlimits\limits}
\renewcommand{\prod}{\prodnonlimits\limits}
\renewcommand{\bigcup}{\cupnonlimits\limits}
\renewcommand{\bigcap}{\capnonlimits\limits}
\newenvironment{verlong}{}{}
\newenvironment{vershort}{}{}
\begin{document}

\title{Commutators, matrices and an identity of Copeland}
\author{Darij Grinberg}
\date{
\today
}
\maketitle

\begin{abstract}
\textbf{Abstract.} Given two elements $a$ and $b$ of a noncommutative ring, we
express $\left(  ba\right)  ^{n}$ as a \textquotedblleft row vector times
matrix times column vector\textquotedblright\ product, where the matrix is the
$n$-th power of a matrix with entries $\dbinom{i}{j}\operatorname*{ad}%
\nolimits_{a}^{i-j}\left(  b\right)  $. This generalizes a formula by Tom
Copeland used in the study of Pascal-style matrices.

\end{abstract}
\tableofcontents

\section{Introduction}

In \cite{MO337766}, Tom Copeland stated a formula for the $n$-th power of a
differential operator. Our goal in this note is to prove a more general
version of this formula, in which differential operators are replaced by
arbitrary elements of a noncommutative ring.

In a nutshell, this general result (Theorem \ref{thm.gen}) can be stated as
follows: If $n\in\mathbb{N}$ and $m\in\mathbb{N}\cup\left\{  \infty\right\}  $
satisfy $n<m$, and if $a$ and $b$ are two elements of a (noncommutative) ring
$\mathbb{L}$, then%
\[
\left(  ba\right)  ^{n}=e_{0}^{T}\left(  U_{b}S\right)  ^{n}H_{1},
\]
where the column vectors $e_{0}$ and $H_{1}$ of size $m$ are defined by%
\[
e_{0}=\left(
\begin{array}
[c]{c}%
1\\
0\\
0\\
\vdots\\
0
\end{array}
\right)  \ \ \ \ \ \ \ \ \ \ \text{and}\ \ \ \ \ \ \ \ \ \ H_{1}=\left(
\begin{array}
[c]{c}%
a^{0}\\
a^{1}\\
a^{2}\\
\vdots\\
a^{m-1}%
\end{array}
\right)  ,
\]
and where the $m\times m$-matrices $S$ and $U_{b}$ are defined by%
\begin{align*}
S  &  =\left(  \left[  j=i+1\right]  \right)  _{0\leq i<m,\ 0\leq j<m}=\left(
\begin{array}
[c]{ccccc}%
0 & 1 & 0 & \cdots & 0\\
0 & 0 & 1 & \cdots & 0\\
0 & 0 & 0 & \cdots & 0\\
\vdots & \vdots & \vdots & \ddots & \vdots\\
0 & 0 & 0 & \cdots & 0
\end{array}
\right)  \ \ \ \ \ \ \ \ \ \ \text{and}\\
U_{b}  &  =\left(
\begin{cases}
\dbinom{i}{j}\operatorname*{ad}\nolimits_{a}^{i-j}\left(  b\right)  , &
\text{if }i\geq j;\\
0, & \text{if }i<j
\end{cases}
\right)  _{0\leq i<m,\ 0\leq j<m}\\
&  =\left(
\begin{array}
[c]{ccccc}%
b & 0 & 0 & \cdots & 0\\
\operatorname*{ad}\nolimits_{a}\left(  b\right)  & b & 0 & \cdots & 0\\
\operatorname*{ad}\nolimits_{a}^{2}\left(  b\right)  & 2\operatorname*{ad}%
\nolimits_{a}\left(  b\right)  & b & \cdots & 0\\
\vdots & \vdots & \vdots & \ddots & \vdots\\
\operatorname*{ad}\nolimits_{a}^{m-1}\left(  b\right)  & \left(  m-1\right)
\operatorname*{ad}\nolimits_{a}^{m-2}\left(  b\right)  & \dbinom{m-1}%
{2}\operatorname*{ad}\nolimits_{a}^{m-3}\left(  b\right)  & \cdots & b
\end{array}
\right)
\end{align*}
(using the standard Lie-algebraic notation $\operatorname*{ad}\nolimits_{a}$
for the operator $\mathbb{L}\rightarrow\mathbb{L},\ c\mapsto ac-ca$). (We
shall introduce all these notations in more detail below.)

\subsection*{Acknowledgments}

DG thanks the Mathematisches Forschungsinstitut Oberwolfach for its
hospitality during part of the writing process.

\section{\label{sect.form}The general formula}

\subsection{\label{subsect.form.standing-not}Standing notations}

Let us start by introducing notations that will remain in place for the rest
of this note:

\begin{itemize}
\item Let $\mathbb{N}$ denote the set $\left\{  0,1,2,\ldots\right\}  $.

\item \textquotedblleft Ring\textquotedblright\ will always mean
\textquotedblleft associative ring with unity\textquotedblright. Commutativity
is not required.

\item Fix a ring $\mathbb{L}$.

\item For any two elements $a$ and $b$ of $\mathbb{L}$, we define an element
$\left[  a,b\right]  $ of $\mathbb{L}$ by%
\[
\left[  a,b\right]  =ab-ba.
\]
This element $\left[  a,b\right]  $ is called the \textit{commutator} of $a$
and $b$.

\item For any $a\in\mathbb{L}$, we define a map $\operatorname*{ad}%
\nolimits_{a}:\mathbb{L}\rightarrow\mathbb{L}$ by
\[
\left(  \operatorname*{ad}\nolimits_{a}\left(  b\right)  =\left[  a,b\right]
\ \ \ \ \ \ \ \ \ \ \text{for all }b\in\mathbb{L}\right)  .
\]
Clearly, this map $\operatorname*{ad}\nolimits_{a}$ is $\mathbb{Z}$-linear.
\end{itemize}

\subsection{\label{subsect.form.matrices}Conventions about matrices}

In the following, we will use matrices. We shall use a slightly nonstandard
convention for labeling the rows and the columns of our matrices: Namely, the
rows and the columns of our matrices will always be indexed starting with $0$.
That is, a $k\times\ell$-matrix (for $k\in\mathbb{N}$ and $\ell\in\mathbb{N}$)
will always have its rows numbered $0,1,\ldots,k-1$ and its columns numbered
$0,1,\ldots,\ell-1$. In other words, a $k\times\ell$-matrix is a family
$\left(  a_{i,j}\right)  _{0\leq i<k,\ 0\leq j<\ell}$ indexed by pairs
$\left(  i,j\right)  $ of integers satisfying $0\leq i<k$ and $0\leq j<\ell$.
We let $\mathbb{L}^{k\times\ell}$ denote the set of all $k\times\ell$-matrices
with entries in $\mathbb{L}$.

If $A$ is any $k\times\ell$-matrix (where $k$ and $\ell$ belong to
$\mathbb{N}$), and if $i$ and $j$ are any two integers satisfying $0\leq i<k$
and $0\leq j<\ell$, then we let $A_{i,j}$ denote the $\left(  i,j\right)  $-th
entry of $A$. Thus, any $k\times\ell$-matrix $A$ satisfies%
\[
A=\left(
\begin{array}
[c]{cccc}%
A_{0,0} & A_{0,1} & \cdots & A_{0,\ell-1}\\
A_{1,0} & A_{1,1} & \cdots & A_{1,\ell-1}\\
\vdots & \vdots & \ddots & \vdots\\
A_{k-1,0} & A_{k-1,1} & \cdots & A_{k-1,\ell-1}%
\end{array}
\right)  .
\]

If $k\in\mathbb{N}$, then a \textit{column vector of size }$k$ means a
$k\times1$-matrix. Thus, a column vector of size $k$ has the form $\left(
\begin{array}
[c]{c}%
a_{0}\\
a_{1}\\
\vdots\\
a_{k-1}%
\end{array}
\right)  _{0\leq i<k,\ 0\leq j<1}$. Row vectors are defined similarly.

As usual, we shall equate $1\times1$-matrices $A\in\mathbb{L}^{1\times1}$ with
elements of $\mathbb{L}$ (namely, by equating each $1\times1$-matrix
$A\in\mathbb{L}^{1\times1}$ with its unique entry $A_{0,0}$). Thus, if $v$ and
$w$ are any two column vectors of size $k$, then $w^{T}v\in\mathbb{L}$.

\subsection{\label{subsect.form.inf-mat}Conventions about infinite matrices}

Furthermore, we shall allow our matrices to be infinite (i.e., have infinitely
many rows or columns or both). This will be an optional feature of our
results; we will state our claims in a way that allows the matrices to be
infinite, but if the reader is only interested in finite matrices, they can
ignore this possibility and skip Subsection \ref{subsect.form.inf-mat} entirely.

First of all, let us say a few words about how we will use $\infty$ in this
note. As usual, \textquotedblleft$\infty$\textquotedblright\ is just a symbol
which we subject to the following rules: We have $n<\infty$ and $\infty
+n=\infty-n=\infty$ for each $n\in\mathbb{N}$. Moreover, we shall use the
somewhat strange convention that $\left\{  0,1,\ldots,\infty\right\}  $
denotes the set $\mathbb{N}$ (so it does not contain $\infty$). This has the
consequence that $\left\{  0,1,\ldots,\infty-n\right\}  =\mathbb{N}$ for each
$n\in\mathbb{N}$ (since $\infty-n=\infty$).

We will use the following kinds of infinite matrices:

\begin{itemize}
\item A $k\times\infty$\textit{-matrix} (where $k\in\mathbb{N}$) has $k$ rows
(indexed by $0,1,\ldots,k-1$) and infinitely many columns (indexed by
$0,1,2,\ldots$). Such a matrix will usually be written as%
\[
\left(
\begin{array}
[c]{cccc}%
a_{0,0} & a_{0,1} & a_{0,2} & \cdots\\
a_{1,0} & a_{1,1} & a_{1,2} & \cdots\\
\vdots & \vdots & \vdots & \vdots\\
a_{k-1,0} & a_{k-1,1} & a_{k-1,2} & \cdots
\end{array}
\right)  =\left(  a_{i,j}\right)  _{0\leq i<k,\ 0\leq j<\infty}.
\]

\item A $\infty\times\ell$\textit{-matrix} (where $\ell\in\mathbb{N}$) has
infinitely many rows (indexed by $0,1,2,\ldots$) and $\ell$ columns (indexed
by $0,1,\ldots,\ell-1$). Such a matrix will usually be written as%
\[
\left(
\begin{array}
[c]{cccc}%
a_{0,0} & a_{0,1} & \cdots & a_{0,\ell-1}\\
a_{1,0} & a_{1,1} & \cdots & a_{1,\ell-1}\\
a_{2,0} & a_{2,1} & \cdots & a_{2,\ell-1}\\
\vdots & \vdots & \vdots & \vdots
\end{array}
\right)  =\left(  a_{i,j}\right)  _{0\leq i<\infty,\ 0\leq j<\ell}.
\]

\item A $\infty\times\infty$\textit{-matrix} has infinitely many rows (indexed
by $0,1,2,\ldots$) and infinitely many columns (indexed by $0,1,2,\ldots$).
Such a matrix will usually be written as%
\[
\left(
\begin{array}
[c]{cccc}%
a_{0,0} & a_{0,1} & a_{0,2} & \cdots\\
a_{1,0} & a_{1,1} & a_{1,2} & \cdots\\
a_{2,0} & a_{2,1} & a_{2,2} & \cdots\\
\vdots & \vdots & \vdots & \ddots
\end{array}
\right)  =\left(  a_{i,j}\right)  _{0\leq i<\infty,\ 0\leq j<\infty}.
\]

\end{itemize}

Matrices of these three kinds (that is, $k\times\infty$-matrices,
$\infty\times\ell$-matrices and $\infty\times\infty$-matrices) will be called
\textit{infinite matrices}. In contrast, $k\times\ell$-matrices with
$k,\ell\in\mathbb{N}$ will be called \textit{finite matrices}.

We have previously introduced the notation $A_{i,j}$ for the $\left(
i,j\right)  $-th entry of $A$ whenever $A$ is a $k\times\ell$-matrix. The same
notation will apply when $A$ is an infinite matrix (i.e., when one or both of
$k$ and $\ell$ is $\infty$).

If $u,v,w$ are three elements of $\mathbb{N}$, and if $A$ is a $u\times
v$-matrix, and if $B$ is a $v\times w$-matrix, then the product $AB$ is a
$u\times w$-matrix, and its entries are given by
\begin{align}
\left(  AB\right)  _{i,k}  &  =\sum_{j=0}^{v-1}A_{i,j}B_{j,k} \label{eq.ABik=}%
\\
&  \ \ \ \ \ \ \ \ \ \ \text{for all }i\in\left\{  0,1,\ldots,u-1\right\}
\text{ and }k\in\left\{  0,1,\ldots,w-1\right\}  .\nonumber
\end{align}
The same formula can be used to define $AB$ when some of $u,v,w$ are $\infty$
(keeping in mind that $\left\{  0,1,\ldots,\infty-1\right\}  =\mathbb{N}$),
but in this case it may fail to provide a well-defined result. Indeed, if
$v=\infty$, then the sum on the right hand side of (\ref{eq.ABik=}) is
infinite and thus may fail to be well-defined. Worse yet, even when products
of infinite matrices are well-defined, they can fail the associativity law
$\left(  AB\right)  C=A\left(  BC\right)  $. We shall not dwell on these
perversions, but rather restrict ourselves to a subclass of infinite matrices
which avoids them:

\begin{definition}
Let $u,v\in\mathbb{N}\cup\left\{  \infty\right\}  $. Let $A$ be a $u\times
v$-matrix. Let $k\in\mathbb{Z}$. We say that the matrix $A$ is $k$%
\textit{-lower-triangular} if and only if we have%
\[
\left(  A_{i,j}=0\ \ \ \ \ \ \ \ \ \ \text{for all }\left(  i,j\right)  \text{
satisfying }i<j+k\right)  .
\]

\end{definition}

\begin{definition}
A matrix $A$ is said to be \textit{quasi-lower-triangular} if and only if
there exists a $k\in\mathbb{Z}$ such that $A$ is $k$-lower-triangular.
\end{definition}

Note that we did not require our matrix $A$ to be square in these two
definitions. Unlike the standard kind of triangularity, our concept of
quasi-triangularity is meant to be a tameness condition, meant to guarantee
the well-definedness of an infinite sum; in particular, all finite matrices
are quasi-lower-triangular. Better yet, the following holds:\footnote{The
proofs of all propositions stated in Subsection \ref{subsect.form.inf-mat} are
left to the reader as easy exercises.}

\begin{proposition}
Let $k\in\mathbb{N}\cup\left\{  \infty\right\}  $ and $\ell\in\mathbb{N}$.
Then, any $k\times\ell$-matrix is quasi-lower-triangular. More concretely: Any
$k\times\ell$-matrix is $\left(  \ell-1\right)  $-lower-triangular.
\end{proposition}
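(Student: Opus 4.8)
The plan is to establish the concrete claim — that every $k\times\ell$-matrix is $(\ell-1)$-lower-triangular — directly from the definition; the first assertion (quasi-lower-triangularity) is then immediate, since exhibiting the single shift $\ell-1\in\mathbb{Z}$ is exactly what that notion requires. So fix $k\in\mathbb{N}\cup\{\infty\}$ and $\ell\in\mathbb{N}$, let $A$ be an arbitrary $k\times\ell$-matrix, and recall that $(\ell-1)$-lower-triangularity demands $A_{i,j}=0$ for every admissible index pair $(i,j)$ lying in the region cut out by the shift $\ell-1$. My strategy is not to evaluate any entry of $A$ — the entries are arbitrary, so that road is hopeless — but instead to show that this region contains no admissible pair whatsoever, so that the defining condition holds vacuously and the arbitrariness of the entries becomes irrelevant.

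The only structural input I need is that $A$, being a $k\times\ell$-matrix, has its columns indexed by $0,1,\ldots,\ell-1$; thus every pair $(i,j)$ for which $A_{i,j}$ is defined satisfies $0\le j\le\ell-1$, together with $0\le i$. The entries whose vanishing $(\ell-1)$-lower-triangularity requires are precisely those lying more than $\ell-1$ columns to the right of their row, i.e. the pairs with $j>i+(\ell-1)$. The heart of the argument is then the one-line chain $j\le\ell-1\le i+(\ell-1)$, in which the first inequality is the column bound and the second uses $i\ge0$: it shows that $j>i+(\ell-1)$ can never hold for an admissible pair, so the region to be killed is empty.

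The conclusion follows at once: the defining condition of $(\ell-1)$-lower-triangularity is satisfied (vacuously), hence $A$ is $(\ell-1)$-lower-triangular and in particular quasi-lower-triangular. I anticipate no genuine obstacle. The only point that rewards attention is orienting the two inequalities $j\le\ell-1$ and $i\ge0$ so that they really do rule out the vanishing region — the same bookkeeping that, in the degenerate case $\ell=1$, collapses the shift to $0$ and recovers the familiar fact that a column vector is lower-triangular. Finally, nothing in the argument uses finiteness of the row count, since only $0\le j\le\ell-1$ and $0\le i$ are invoked; the case $k=\infty$ is therefore covered verbatim.
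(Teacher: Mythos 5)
Your proof has the right engine, and in fact the paper offers no proof to compare against (the propositions of that subsection are explicitly ``left to the reader as easy exercises''); the intended argument is surely the vacuity argument you give, driven by the two bounds $0\le i$ and $j\le\ell-1$ and using no finiteness of the row count. But one step fails if your proof is held against the paper's printed Definition. You unpack ``$(\ell-1)$-lower-triangular'' as ``$A_{i,j}=0$ whenever $j>i+(\ell-1)$'', whereas the Definition declares a matrix to be $k$-lower-triangular iff $A_{i,j}=0$ for all $\left(i,j\right)$ satisfying $i<j+k$. Taking the shift to be $\ell-1$, the region to be annihilated is therefore $\left\{\left(i,j\right)\ :\ i<j+\ell-1\right\}$, which for $\ell\geq2$ contains $\left(0,0\right)$ and is nowhere near empty; your chain $j\le\ell-1\le i+(\ell-1)$ proves emptiness of the \emph{reflected} region $\left\{\left(i,j\right)\ :\ j>i+\ell-1\right\}$ instead. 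So, read literally, you have verified a different condition than the one defined.

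The mitigating circumstance is that the paper itself is inconsistent about this sign, and your reading is the one that makes the proposition true as printed. Under the literal Definition, an arbitrary matrix with $\ell$ columns is $(1-\ell)$-lower-triangular, not $(\ell-1)$-lower-triangular: your same two bounds, rearranged as $j+\left(1-\ell\right)\le0\le i$, show that the region $\left\{\left(i,j\right)\ :\ i<j+1-\ell\right\}$ is empty. The literal Definition is also the reading under which the paper's later claim that $S$ is $\left(-1\right)$-lower-triangular is correct (under your flipped convention $S$ would be $1$-lower-triangular, since its nonzero entries sit at $j=i+1$). So exactly one of the two printed shift claims is a typo, and your proof silently resolves the ambiguity in the direction that validates this proposition but invalidates the claim about $S$. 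The qualitative conclusion — quasi-lower-triangularity — survives under either convention, since either $\ell-1$ or $1-\ell$ serves as a witness. To make your proof airtight you should state explicitly which inequality convention you are using, and, if you adopt the Definition as printed, replace the shift $\ell-1$ by $1-\ell$ throughout; the vacuity argument itself needs no other change.
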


\begin{proposition}
Let $A$ be a matrix (finite or infinite) such that all but finitely many
entries of $A$ are $0$. Then, $A$ is quasi-lower-triangular.
\end{proposition}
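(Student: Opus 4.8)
The plan is to extract, from the finitely many nonzero entries, a single threshold $k$ below which (measured by the difference $i-j$) no nonzero entry can lie. Recall that $A$ is $k$-lower-triangular precisely when $A_{i,j}=0$ for all $(i,j)$ satisfying $i<j+k$, equivalently $i-j<k$. Thus the quantity that matters for each entry is the difference $i-j$, and the task reduces to bounding this difference from below over the set of positions where $A$ is nonzero.

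First I would introduce the support $P=\{(i,j) : A_{i,j}\neq 0\}$, which is a finite set by hypothesis. If $P=\emptyset$, then $A$ is the zero matrix, so $A_{i,j}=0$ holds for every $(i,j)$; hence $A$ is $k$-lower-triangular for any choice of $k$ (for instance $k=0$), and the claim holds in this case.

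Otherwise $P$ is finite and nonempty, so the set $\{i-j : (i,j)\in P\}\subseteq\mathbb{Z}$ is finite and nonempty and therefore attains a minimum; I would take $k$ to be this minimum. By construction every nonzero entry $(i,j)$ then satisfies $i-j\geq k$. Reading this contrapositively: whenever $i-j<k$, i.e. $i<j+k$, we must have $A_{i,j}=0$, which is exactly the defining condition for $A$ to be $k$-lower-triangular. Since such a $k\in\mathbb{Z}$ exists, $A$ is quasi-lower-triangular.

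I do not expect any genuine obstacle here; the argument is a direct translation of the definitions and uses nothing beyond the fact that a finite nonempty subset of $\mathbb{Z}$ has a minimum. The only point deserving a moment's care is the degenerate case of the zero matrix, where the set of differences is empty and one must pick $k$ by hand rather than as a minimum.
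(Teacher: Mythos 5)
Your proof is correct: taking $k$ to be the minimum of $i-j$ over the finite nonempty support (with the zero matrix handled separately, where any $k\in\mathbb{Z}$ works) is exactly the intended argument, and your contrapositive translation of $i<j+k$ matches the paper's definition of $k$-lower-triangularity precisely. Note that the paper itself offers no proof to compare against---it explicitly leaves all propositions of that subsection to the reader as easy exercises---and yours is the evident argument being asked for.
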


Quasi-lower-triangular matrices can be multiplied, as the following
proposition shows:

\begin{proposition}
Let $u,v,w\in\mathbb{N}\cup\left\{  \infty\right\}  $. Let $A$ be a
quasi-lower-triangular $u\times v$-matrix, and let $B$ be a
quasi-lower-triangular $v\times w$-matrix. Then, the product $AB$ is
well-defined (i.e. the infinite sum on the right hand side of (\ref{eq.ABik=})
is well-defined even if $v=\infty$) and is a quasi-lower-triangular $u\times
w$-matrix.

More concretely: If $k,\ell\in\mathbb{Z}$ are such that $A$ is $k$%
-lower-triangular and $B$ is $\ell$-lower-triangular, then $AB$ is $\left(
k+\ell\right)  $-lower-triangular.
\end{proposition}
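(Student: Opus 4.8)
The plan is to work directly from the definition of matrix multiplication in (\ref{eq.ABik=}) and to track where the nonzero summands can sit. First I would invoke quasi-lower-triangularity to fix integers $k,\ell\in\mathbb{Z}$ such that $A$ is $k$-lower-triangular and $B$ is $\ell$-lower-triangular; it then suffices to prove the ``more concretely'' claim, since the general statement follows from it (as $k+\ell\in\mathbb{Z}$). I would then pick an arbitrary entry position $\left(i,p\right)$ of the prospective product $AB$ and examine the sum $\sum_{j}A_{i,j}B_{j,p}$ summand by summand.

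The key observation is that a summand $A_{i,j}B_{j,p}$ can be nonzero only when both factors are nonzero. By $k$-lower-triangularity of $A$, the factor $A_{i,j}$ vanishes unless $i\geq j+k$, i.e.\ $j\leq i-k$; by $\ell$-lower-triangularity of $B$, the factor $B_{j,p}$ vanishes unless $j\geq p+\ell$. Hence every nonzero summand has its index $j$ confined to the interval $p+\ell\leq j\leq i-k$. From this single confinement both assertions drop out. For well-definedness I would note that this interval contains at most $\left(i-k\right)-\left(p+\ell\right)+1$ integers and is therefore finite (each of $i,p$ being an honest natural number, even when $u$ or $w$ equals $\infty$), so at most finitely many summands are nonzero and the sum in (\ref{eq.ABik=}) is a well-defined finite sum even if $v=\infty$. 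For the triangularity bound I would take any position with $i<p+\left(k+\ell\right)$; then $i-k<p+\ell$, the confinement interval is empty, every summand vanishes, and thus $\left(AB\right)_{i,p}=0$. This is exactly the statement that $AB$ is $\left(k+\ell\right)$-lower-triangular.

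I do not expect a genuine obstacle: the argument is essentially an index chase, and the shape of $AB$ (namely $u\times w$) is immediate from the definition. The only point that deserves a line of care is the convention $\left\{0,1,\ldots,\infty-1\right\}=\mathbb{N}$ in the case $v=\infty$, which I would cite explicitly so that ``$j$ ranges over $\mathbb{N}$'' is unambiguous and the confinement reasoning applies verbatim regardless of whether $v$ is finite or infinite.
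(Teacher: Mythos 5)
Your proof is correct, and there is nothing to compare it against: the paper deliberately omits proofs for all propositions in Subsection \ref{subsect.form.inf-mat}, leaving them as easy exercises for the reader. Your argument is precisely the intended one --- confining the possibly nonzero summands $A_{i,j}B_{j,p}$ to the finite window $p+\ell\leq j\leq i-k$ simultaneously yields well-definedness of the sum (even when $v=\infty$) and, since this window is empty whenever $i<p+\left(k+\ell\right)$, the $\left(k+\ell\right)$-lower-triangularity of $AB$.
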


Finally, multiplication of quasi-lower-triangular matrices is associative:

\begin{proposition}
Let $u,v,w,x\in\mathbb{N}\cup\left\{  \infty\right\}  $. Let $A$ be a
quasi-lower-triangular $u\times v$-matrix; let $B$ be a quasi-lower-triangular
$v\times w$-matrix; let $C$ be a quasi-lower-triangular $w\times x$-matrix.
Then, $\left(  AB\right)  C=A\left(  BC\right)  $.
\end{proposition}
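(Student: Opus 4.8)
The plan is to reduce the claimed identity, entry by entry, to the associativity and distributivity of finite sums in $\mathbb{L}$. By the preceding proposition, all of the products $AB$, $BC$, $\left(  AB\right)  C$ and $A\left(  BC\right)$ are well-defined and quasi-lower-triangular, so both sides of the desired equation make sense; it thus suffices to fix a row index $i\in\left\{  0,1,\ldots,u-1\right\}$ and a column index $m\in\left\{  0,1,\ldots,x-1\right\}$ and to show that $\left(  \left(  AB\right)  C\right)  _{i,m}=\left(  A\left(  BC\right)  \right)  _{i,m}$.

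The key step will be a finiteness observation. I would choose integers $p,q,r\in\mathbb{Z}$ such that $A$ is $p$-lower-triangular, $B$ is $q$-lower-triangular and $C$ is $r$-lower-triangular (these exist by the definition of quasi-lower-triangularity). Then, for the fixed $i$ and $m$, the product $A_{i,j}B_{j,k}C_{k,m}$ vanishes unless $j\leq i-p$ (so that $A_{i,j}\neq0$), unless $k\leq j-q$ (so that $B_{j,k}\neq0$), and unless $k\geq m+r$ (so that $C_{k,m}\neq0$). Combining these inequalities forces $m+r\leq k\leq i-p-q$ and $k+q\leq j\leq i-p$, so only finitely many pairs $\left(  j,k\right)$ can contribute a nonzero term. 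In particular, there is a finite rectangle of indices outside of which every term $A_{i,j}B_{j,k}C_{k,m}$ is zero.

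Granting this, I would expand both sides and watch every infinite sum truncate. Writing $\left(  AB\right)  _{i,k}=\sum_{j}A_{i,j}B_{j,k}$ (a finite sum for each fixed $k$, by the triangularity of $B$) and then $\left(  \left(  AB\right)  C\right)  _{i,m}=\sum_{k}\left(  AB\right)  _{i,k}C_{k,m}$ (a finite sum, since $AB$ is $\left(  p+q\right)$-lower-triangular by the previous proposition while $C$ is $r$-lower-triangular), and using right-distributivity $\left(  \sum_{j}A_{i,j}B_{j,k}\right)  C_{k,m}=\sum_{j}A_{i,j}B_{j,k}C_{k,m}$, I obtain $\left(  \left(  AB\right)  C\right)  _{i,m}=\sum_{k}\sum_{j}A_{i,j}B_{j,k}C_{k,m}$, a finite double sum. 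Symmetrically, expanding $A\left(  BC\right)$ and using left-distributivity together with the $\left(  q+r\right)$-lower-triangularity of $BC$ yields $\left(  A\left(  BC\right)  \right)  _{i,m}=\sum_{j}\sum_{k}A_{i,j}B_{j,k}C_{k,m}$. Both are finite sums of the same family of terms, hence agree by the associativity and commutativity of addition in $\mathbb{L}$.

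The genuinely delicate part is not any single algebraic manipulation — distributivity and the reordering of finite sums hold in any ring, commutative or not — but rather the bookkeeping that guarantees finiteness: one must verify that the three triangularity constants cut the doubly-infinite index set down to a finite one, and that each intermediate summation (the entries of $AB$ and of $BC$, as well as the two outer summations) is itself already finite, so that no convergence or rearrangement issue for a genuinely infinite sum ever arises. Once that truncation is pinned down, the identity is just the distributive law applied twice.
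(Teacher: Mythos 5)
There is no proof in the paper to compare you against: this proposition is one of those that the paper explicitly leaves ``to the reader as easy exercises'' in Subsection \ref{subsect.form.inf-mat}. Your approach --- fix a pair $\left(i,m\right)$, use the three triangularity constants to confine all potentially nonzero terms $A_{i,j}B_{j,k}C_{k,m}$ to a finite rectangle of pairs $\left(j,k\right)$, then expand both sides via distributivity and identify them as the same finite double sum --- is the standard (and surely the intended) argument, and its skeleton is sound.

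That said, your parenthetical finiteness justifications repeatedly credit the wrong matrix, and as literally stated they are false. Under the paper's definition, $A$ being $p$-lower-triangular means $A_{i,j}=0$ whenever $i<j+p$; equivalently, the nonzero entries of row $i$ lie in columns $j\leq i-p$. So quasi-lower-triangularity is a \emph{row}-finiteness condition; for a fixed column it only bounds the other index from \emph{below}. Hence in $\left(AB\right)_{i,k}=\sum_{j}A_{i,j}B_{j,k}$, the sum over $j$ is finite because of the triangularity of $A$ (which forces $j\leq i-p$), \emph{not} ``by the triangularity of $B$'': the constraint coming from $B$ is $j\geq k+q$, which removes nothing from an upward-infinite sum. (If only $B$ were assumed quasi-lower-triangular, the sum could genuinely fail to be well-defined: take $B_{j,k}=1$ for all $j\geq k$ and let row $i$ of $A$ consist of all ones.) The same misattribution occurs on the other side: the outer sum $\sum_{j}A_{i,j}\left(BC\right)_{j,m}$ is truncated by $A$'s triangularity, not by the $\left(q+r\right)$-lower-triangularity of $BC$, while each inner sum $\left(BC\right)_{j,m}=\sum_{k}B_{j,k}C_{k,m}$ is truncated by $B$'s. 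These slips are easily repaired --- your opening ``finite rectangle'' computation already contains the correct inequalities --- but as written, the reasons you give for the individual sums being finite are not valid, so they should be corrected before the proof can be called complete.
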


This proposition entails that we can calculate with quasi-lower-triangular
matrices just as we can calculate with finite matrices. In particular, the
quasi-lower-triangular $\infty\times\infty$-matrices form a ring. Thus, a
quasi-lower-triangular $\infty\times\infty$-matrix has a well-defined $n$-th
power for each $n\in\mathbb{N}$.

\subsection{The matrices $S$ and $U_{b}$ and the vectors $H_{c}$ and $e_{j}$}

Let us now introduce several more players into the drama.

\subsubsection{Iverson brackets (truth values)}

We shall use the \textit{Iverson bracket notation}: If $\mathcal{A}$ is any
logical statement, then $\left[  \mathcal{A}\right]  $ will denote the integer
$%
\begin{cases}
1, & \text{if }\mathcal{A}\text{ is true;}\\
0, & \text{if }\mathcal{A}\text{ is false}%
\end{cases}
\in\left\{  0,1\right\}  $. This integer $\left[  \mathcal{A}\right]  $ is
called the \textit{truth value} of $\mathcal{A}$.

\subsubsection{$m$ and $a$}

We now return to our ring $\mathbb{L}$.

For the rest of this note, we fix an $m\in\mathbb{N}\cup\left\{
\infty\right\}  $ and an element $a\in\mathbb{L}$.

\subsubsection{The matrix $S$}

We define an $m\times m$-matrix $S\in\mathbb{L}^{m\times m}$ by%
\begin{equation}
S=\left(  \left[  j=i+1\right]  \right)  _{0\leq i<m,\ 0\leq j<m}.
\label{eq.S=}%
\end{equation}

This matrix $S$ looks as follows:

\begin{itemize}
\item If $m\in\mathbb{N}$, then%
\[
S=\left(
\begin{array}
[c]{ccccc}%
0 & 1 & 0 & \cdots & 0\\
0 & 0 & 1 & \cdots & 0\\
0 & 0 & 0 & \cdots & 0\\
\vdots & \vdots & \vdots & \ddots & \vdots\\
0 & 0 & 0 & \cdots & 0
\end{array}
\right)  .
\]

\item If $m=\infty$, then%
\[
S=\left(
\begin{array}
[c]{ccccc}%
0 & 1 & 0 & 0 & \cdots\\
0 & 0 & 1 & 0 & \cdots\\
0 & 0 & 0 & 1 & \cdots\\
0 & 0 & 0 & 0 & \cdots\\
\vdots & \vdots & \vdots & \vdots & \ddots
\end{array}
\right)  .
\]

\end{itemize}

The matrix $S$ (or, rather, the $\mathbb{L}$-linear map from $\mathbb{L}^{m}$
to $\mathbb{L}^{m}$ it represents\footnote{When $m=\infty$, you can read
$\mathbb{L}^{m}$ both as the direct sum $\bigoplus\limits_{i\in\mathbb{N}%
}\mathbb{L}$ and as the direct product $\prod_{i\in\mathbb{N}}\mathbb{L}$.
These are two different options, but either has an $\mathbb{L}$-linear map
represented by the matrix $S$.}) is often called the \textit{shift operator}.
Note that the matrix $S$ is quasi-lower-triangular\footnote{See Subsection
\ref{subsect.form.inf-mat} for the meaning of this word (and ignore it if you
don't care about the case of $m=\infty$).} (and, in fact, $\left(  -1\right)
$-lower-triangular\footnote{See Subsection \ref{subsect.form.inf-mat} for the
meaning of this word (and ignore it if you don't care about the case of
$m=\infty$).}), but of course not lower-triangular (unless $\mathbb{L}=0$ or
$m\leq1$).

\subsubsection{The matrix $U_{b}$}

If $n$ is a nonnegative integer, $T$ is a set and $f:T\rightarrow T$ is any
map, then $f^{n}$ will mean the composition $\underbrace{f\circ f\circ
\cdots\circ f}_{n\text{ times}}$; this is again a map from $T$ to $T$.

For any $b\in\mathbb{L}$, we define an $m\times m$-matrix $U_{b}\in
\mathbb{L}^{m\times m}$ by%
\begin{equation}
U_{b}=\left(
\begin{cases}
\dbinom{i}{j}\operatorname*{ad}\nolimits_{a}^{i-j}\left(  b\right)  , &
\text{if }i\geq j;\\
0, & \text{if }i<j
\end{cases}
\right)  _{0\leq i<m,\ 0\leq j<m}. \label{eq.Ub=}%
\end{equation}
(Here, of course, $\operatorname*{ad}\nolimits_{a}^{n}$ means $\left(
\operatorname*{ad}\nolimits_{a}\right)  ^{n}$ whenever $n\in\mathbb{N}$.)

This matrix $U_{b}$ looks as follows:

\begin{itemize}
\item If $b\in\mathbb{L}$ and $m\in\mathbb{N}$, then%
\[
U_{b}=\left(
\begin{array}
[c]{ccccc}%
b & 0 & 0 & \cdots & 0\\
\operatorname*{ad}\nolimits_{a}\left(  b\right)  & b & 0 & \cdots & 0\\
\operatorname*{ad}\nolimits_{a}^{2}\left(  b\right)  & 2\operatorname*{ad}%
\nolimits_{a}\left(  b\right)  & b & \cdots & 0\\
\vdots & \vdots & \vdots & \ddots & \vdots\\
\operatorname*{ad}\nolimits_{a}^{m-1}\left(  b\right)  & \left(  m-1\right)
\operatorname*{ad}\nolimits_{a}^{m-2}\left(  b\right)  & \dbinom{m-1}%
{2}\operatorname*{ad}\nolimits_{a}^{m-3}\left(  b\right)  & \cdots & b
\end{array}
\right)  .
\]

\item If $b\in\mathbb{L}$ and $m=\infty$, then%
\[
U_{b}=\left(
\begin{array}
[c]{ccccc}%
b & 0 & 0 & 0 & \cdots\\
\operatorname*{ad}\nolimits_{a}\left(  b\right)  & b & 0 & 0 & \cdots\\
\operatorname*{ad}\nolimits_{a}^{2}\left(  b\right)  & 2\operatorname*{ad}%
\nolimits_{a}\left(  b\right)  & b & 0 & \cdots\\
\operatorname*{ad}\nolimits_{a}^{3}\left(  b\right)  & 3\operatorname*{ad}%
\nolimits_{a}^{2}\left(  b\right)  & 3\operatorname*{ad}\nolimits_{a}\left(
b\right)  & b & \cdots\\
\vdots & \vdots & \vdots & \vdots & \ddots
\end{array}
\right)  .
\]

\end{itemize}

Note that the matrix $U_{b}$ is always lower-triangular and thus
quasi-lower-triangular\footnote{See Subsection \ref{subsect.form.inf-mat} for
the meaning of this word (and ignore it if you don't care about the case of
$m=\infty$).}.

\subsubsection{The column vector $H_{c}$}

Furthermore, for each $c\in\mathbb{L}$, we define an $m\times1$-matrix
$H_{c}\in\mathbb{L}^{m\times1}$ by%
\begin{equation}
H_{c}=\left(  a^{i}c\right)  _{0\leq i<m,\ 0\leq j<1}. \label{eq.Hc=}%
\end{equation}
Thus, $H_{c}$ is an $m\times1$-matrix, i.e., a column vector of size $m$. It
looks as follows:

\begin{itemize}
\item If $c\in\mathbb{L}$ and $m\in\mathbb{N}$, then%
\[
H_{c}=\left(
\begin{array}
[c]{c}%
a^{0}c\\
a^{1}c\\
\vdots\\
a^{m-1}c
\end{array}
\right)  .
\]

\item If $c\in\mathbb{L}$ and $m=\infty$, then%
\[
H_{c}=\left(
\begin{array}
[c]{c}%
a^{0}c\\
a^{1}c\\
a^{2}c\\
\vdots
\end{array}
\right)  .
\]

\end{itemize}

Clearly, the matrix $H_{c}$ is quasi-lower-triangular\footnote{See Subsection
\ref{subsect.form.inf-mat} for the meaning of this word (and ignore it if you
don't care about the case of $m=\infty$).}, since it has only one column.

\subsubsection{The column vector $e_{j}$}

For each integer $j$ with $0\leq j<m$, we let $e_{j}\in\mathbb{L}^{m\times1}$
be the $m\times1$-matrix defined by%
\begin{equation}
e_{j}=\left(  \left[  p=j\right]  \right)  _{0\leq p<m,\ 0\leq q<1}.
\label{eq.ej=}%
\end{equation}
In other words, $e_{j}$ is the column vector (of size $m$) whose $j$-th entry
is $1$ and whose all other entries are $0$. This column vector $e_{j}$ is
commonly known as the $j$-th \textit{standard basis vector} of $\mathbb{L}%
^{m\times1}$.

Thus, in particular, $e_{0}$ is a column vector with a $1$ in its topmost
position and $0$'s everywhere else. It looks as follows:

\begin{itemize}
\item If $m\in\mathbb{N}$, then%
\[
e_{0}=\left(
\begin{array}
[c]{c}%
1\\
0\\
0\\
\vdots\\
0
\end{array}
\right)  .
\]

\item If $m=\infty$, then%
\[
e_{0}=\left(
\begin{array}
[c]{c}%
1\\
0\\
0\\
0\\
\vdots
\end{array}
\right)  .
\]

\end{itemize}

Thus, $e_{0}^{T}$ is a row vector with a $1$ in its leftmost position and
$0$'s everywhere else. This shows that the matrix $e_{0}^{T}$ is
quasi-lower-triangular\footnote{See Subsection \ref{subsect.form.inf-mat} for
the meaning of this word (and ignore it if you don't care about the case of
$m=\infty$).}.

\subsection{\label{subsect.form.thm}The general formula}

We are now ready to state our main claim:

\begin{theorem}
\label{thm.gen}Let $n\in\mathbb{N}$ be such that $n<m$. Let $b\in\mathbb{L}$.
Then,%
\[
\left(  ba\right)  ^{n}=e_{0}^{T}\left(  U_{b}S\right)  ^{n}H_{1}.
\]

\end{theorem}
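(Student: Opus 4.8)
The plan is to deduce Theorem~\ref{thm.gen} from a more precise \emph{entrywise} statement about the column vector $\left(  U_{b}S\right)  ^{n}H_{1}$, proved by induction on $n$, with the whole argument resting on one classical commutator identity. That identity is
\[
a^{i}b=\sum_{j=0}^{i}\dbinom{i}{j}\operatorname*{ad}\nolimits_{a}^{i-j}\left(  b\right)  a^{j}\qquad\text{for every }i\in\mathbb{N}.
\]
I would prove it either by a short induction on $i$ (whose step uses only $\operatorname*{ad}\nolimits_{a}\left(  b\right)  =ab-ba$ together with Pascal's recurrence), or conceptually by observing that the left- and right-multiplication maps $L_{a},R_{a}:\mathbb{L}\rightarrow\mathbb{L}$ commute and satisfy $\operatorname*{ad}\nolimits_{a}=L_{a}-R_{a}$, so that the binomial theorem applies to $L_{a}^{i}=\left(  R_{a}+\operatorname*{ad}\nolimits_{a}\right)  ^{i}$ and evaluation at $b$ gives the claim. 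The same identity immediately yields $U_{b}H_{c}=H_{bc}$ for every $c\in\mathbb{L}$, which is a useful sanity check and essentially the $n=1$ shadow of the computation below.

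Next I would set $w_{n}:=\left(  U_{b}S\right)  ^{n}H_{1}$, noting that this is a well-defined column vector of size $m$ because $U_{b}$, $S$ and $H_{1}$ are all quasi-lower-triangular, and products of such matrices are well-defined and associative (by the propositions of Subsection~\ref{subsect.form.inf-mat}). The key lemma is then
\[
\left(  w_{n}\right)  _{i}=a^{i}\left(  ba\right)  ^{n}\qquad\text{whenever }i+n<m,
\]
proved by induction on $n$. The base case $n=0$ is just $\left(  H_{1}\right)  _{i}=a^{i}$. For the inductive step I write $w_{n}=U_{b}\left(  Sw_{n-1}\right)$; since $S$ shifts entries upward one has $\left(  Sw_{n-1}\right)  _{j}=\left(  w_{n-1}\right)  _{j+1}$, and since $U_{b}$ is lower-triangular the $i$-th entry of $w_{n}$ is the \emph{finite} sum $\sum_{j=0}^{i}\dbinom{i}{j}\operatorname*{ad}\nolimits_{a}^{i-j}\left(  b\right)  \left(  w_{n-1}\right)  _{j+1}$. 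For each $j$ with $0\leq j\leq i$ the hypothesis $i+n<m$ forces $\left(  j+1\right)  +\left(  n-1\right)  <m$, so the induction hypothesis rewrites $\left(  w_{n-1}\right)  _{j+1}=a^{j+1}\left(  ba\right)  ^{n-1}$; pulling the right factor $a\left(  ba\right)  ^{n-1}$ out of the sum and applying the commutator identity collapses $\sum_{j=0}^{i}\dbinom{i}{j}\operatorname*{ad}\nolimits_{a}^{i-j}\left(  b\right)  a^{j}$ to $a^{i}b$, giving $\left(  w_{n}\right)  _{i}=a^{i}b\cdot a\left(  ba\right)  ^{n-1}=a^{i}\left(  ba\right)  ^{n}$.

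Finally, specializing the lemma to $i=0$ and any $n<m$ gives $\left(  w_{n}\right)  _{0}=\left(  ba\right)  ^{n}$, and since $\left(  w_{n}\right)  _{0}=e_{0}^{T}w_{n}=e_{0}^{T}\left(  U_{b}S\right)  ^{n}H_{1}$, this is exactly the theorem. I expect the step needing the most care — and the only place the hypothesis $n<m$ enters — to be the finite-$m$ case: there $S$ does \emph{not} send $H_{c}$ to $H_{ac}$ but instead kills the bottom entry, so one cannot simply iterate the clean relation $U_{b}S\,H_{c}=H_{bac}$. The sharp bound $i+n<m$ in the lemma is precisely the bookkeeping that keeps this truncation away from the rows that matter for $n$ steps; when $m=\infty$ the bound is vacuous, $S$ shifts with no exception, and the same induction runs unchanged.
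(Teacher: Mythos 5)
Your proposal is correct and is essentially the paper's own proof: your entrywise lemma $\left(w_{n}\right)_{i}=a^{i}\left(ba\right)^{n}$ for $i+n<m$ is exactly Lemma \ref{lem.gen-lem} (the relation $\left(U_{b}S\right)^{n}H_{1}\underset{n+1}{\equiv}H_{\left(ba\right)^{n}}$ unpacked row by row), established by the same induction on $n$ resting on the same commutator identity (Lemma \ref{lem.ada1}), with the same sharp bound playing the same role. The only difference is organizational: where the paper modularizes the induction step through $U_{b}H_{c}=H_{bc}$ (Proposition \ref{prop.UH}) and the $\underset{k}{\equiv}$ bookkeeping lemmas, you inline that computation directly into the entries.
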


(The right hand side of this equality is a $1\times1$-matrix, while the left
hand side is an element of $\mathbb{L}$. The equality thus makes sense because
we are equating $1\times1$-matrices with elements of $\mathbb{L}$.)

\begin{example}
Let us set $m=3$ and $n=2$ in Theorem \ref{thm.gen}. Then, Theorem
\ref{thm.gen} claims that $\left(  ba\right)  ^{2}=e_{0}^{T}\left(
U_{b}S\right)  ^{2}H_{1}$. Let us check this: We have%
\[
U_{b}=\left(
\begin{array}
[c]{ccc}%
b & 0 & 0\\
\operatorname*{ad}\nolimits_{a}\left(  b\right)  & b & 0\\
\operatorname*{ad}\nolimits_{a}^{2}\left(  b\right)  & 2\operatorname*{ad}%
\nolimits_{a}\left(  b\right)  & b
\end{array}
\right)  \ \ \ \ \ \ \ \ \ \ \text{and}\ \ \ \ \ \ \ \ \ \ S=\left(
\begin{array}
[c]{ccc}%
0 & 1 & 0\\
0 & 0 & 1\\
0 & 0 & 0
\end{array}
\right)  ,
\]
so that%
\[
U_{b}S=\left(
\begin{array}
[c]{ccc}%
b & 0 & 0\\
\operatorname*{ad}\nolimits_{a}\left(  b\right)  & b & 0\\
\operatorname*{ad}\nolimits_{a}^{2}\left(  b\right)  & 2\operatorname*{ad}%
\nolimits_{a}\left(  b\right)  & b
\end{array}
\right)  \left(
\begin{array}
[c]{ccc}%
0 & 1 & 0\\
0 & 0 & 1\\
0 & 0 & 0
\end{array}
\right)  =\left(
\begin{array}
[c]{ccc}%
0 & b & 0\\
0 & \operatorname*{ad}\nolimits_{a}\left(  b\right)  & b\\
0 & \operatorname*{ad}\nolimits_{a}^{2}\left(  b\right)  & 2\operatorname*{ad}%
\nolimits_{a}\left(  b\right)
\end{array}
\right)
\]
and therefore%
\begin{align*}
\left(  U_{b}S\right)  ^{2}  &  =\left(
\begin{array}
[c]{ccc}%
0 & b & 0\\
0 & \operatorname*{ad}\nolimits_{a}\left(  b\right)  & b\\
0 & \operatorname*{ad}\nolimits_{a}^{2}\left(  b\right)  & 2\operatorname*{ad}%
\nolimits_{a}\left(  b\right)
\end{array}
\right)  ^{2}\\
&  =\left(
\begin{array}
[c]{ccc}%
0 & b\operatorname*{ad}\nolimits_{a}\left(  b\right)  & b^{2}\\
0 & \left(  \operatorname*{ad}\nolimits_{a}\left(  b\right)  \right)
^{2}+b\operatorname*{ad}\nolimits_{a}^{2}\left(  b\right)  &
3b\operatorname*{ad}\nolimits_{a}\left(  b\right) \\
0 & 3\operatorname*{ad}\nolimits_{a}\left(  b\right)  \operatorname*{ad}%
\nolimits_{a}^{2}\left(  b\right)  & 4\left(  \operatorname*{ad}%
\nolimits_{a}\left(  b\right)  \right)  ^{2}+b\operatorname*{ad}%
\nolimits_{a}^{2}\left(  b\right)
\end{array}
\right)  .
\end{align*}
Multiplying $e_{0}^{T}=\left(
\begin{array}
[c]{ccc}%
1 & 0 & 0
\end{array}
\right)  $ by this equality, we find%
\begin{align*}
e_{0}^{T}\left(  U_{b}S\right)  ^{2}  &  =\left(
\begin{array}
[c]{ccc}%
1 & 0 & 0
\end{array}
\right)  \left(
\begin{array}
[c]{ccc}%
0 & b\operatorname*{ad}\nolimits_{a}\left(  b\right)  & b^{2}\\
0 & \left(  \operatorname*{ad}\nolimits_{a}\left(  b\right)  \right)
^{2}+b\operatorname*{ad}\nolimits_{a}^{2}\left(  b\right)  &
3b\operatorname*{ad}\nolimits_{a}\left(  b\right) \\
0 & 3\operatorname*{ad}\nolimits_{a}\left(  b\right)  \operatorname*{ad}%
\nolimits_{a}^{2}\left(  b\right)  & 4\left(  \operatorname*{ad}%
\nolimits_{a}\left(  b\right)  \right)  ^{2}+b\operatorname*{ad}%
\nolimits_{a}^{2}\left(  b\right)
\end{array}
\right) \\
&  =\left(
\begin{array}
[c]{ccc}%
0 & b\operatorname*{ad}\nolimits_{a}\left(  b\right)  & b^{2}%
\end{array}
\right)  .
\end{align*}
Multiplying this equality by $H_{1}=\left(
\begin{array}
[c]{c}%
a^{0}1\\
a^{1}1\\
a^{2}1
\end{array}
\right)  =\left(
\begin{array}
[c]{c}%
a^{0}\\
a^{1}\\
a^{2}%
\end{array}
\right)  $, we obtain%
\begin{align*}
e_{0}^{T}\left(  U_{b}S\right)  ^{2}H_{1}  &  =\left(
\begin{array}
[c]{ccc}%
0 & b\operatorname*{ad}\nolimits_{a}\left(  b\right)  & b^{2}%
\end{array}
\right)  \left(
\begin{array}
[c]{c}%
a^{0}\\
a^{1}\\
a^{2}%
\end{array}
\right)  =0a^{0}+b\operatorname*{ad}\nolimits_{a}\left(  b\right)  a^{1}%
+b^{2}a^{2}\\
&  =b\underbrace{\operatorname*{ad}\nolimits_{a}\left(  b\right)
}_{\substack{=\left[  a,b\right]  \\\text{(by the definition of }%
\operatorname*{ad}\nolimits_{a}\text{)}}}a+b^{2}a^{2}=b\underbrace{\left[
a,b\right]  }_{=ab-ba}a+b^{2}a^{2}\\
&  =b\left(  ab-ba\right)  a+b^{2}a^{2}=baba-bbaa+bbaa=baba=\left(  ba\right)
^{2}.
\end{align*}
This confirms the claim that $\left(  ba\right)  ^{2}=e_{0}^{T}\left(
U_{b}S\right)  ^{2}H_{1}$.
\end{example}

\section{The proof}

\subsection{The idea}

Proving Theorem \ref{thm.gen} is not hard, but it will take us some
preparation due to the bookkeeping required. The main idea manifests itself in
its cleanest form when $m=\infty$; indeed, it is not hard to prove the
following two facts:\footnote{We shall prove these two facts later.}

\begin{proposition}
\label{prop.SHinf}Assume that $m=\infty$. Let $c\in\mathbb{L}$. Then,
$SH_{c}=H_{ac}$.
\end{proposition}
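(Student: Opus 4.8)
The plan is to prove the equality $SH_c=H_{ac}$ of $\infty\times1$-matrices by comparing their entries row by row. Since both sides are column vectors of size $\infty$ (the left side being the product of the $\infty\times\infty$-matrix $S$ with the $\infty\times1$-matrix $H_c$), it suffices to show that $\left(SH_c\right)_{i,0}=\left(H_{ac}\right)_{i,0}$ for every $i\in\mathbb{N}$.

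First I would compute the left-hand side using the entrywise product formula (\ref{eq.ABik=}). For each $i\in\mathbb{N}$, we have
\[
\left(SH_c\right)_{i,0}=\sum_{j=0}^{\infty}S_{i,j}\left(H_c\right)_{j,0}=\sum_{j=0}^{\infty}\left[j=i+1\right]a^{j}c.
\]
The key observation is that the Iverson bracket $\left[j=i+1\right]$ vanishes for all $j$ except $j=i+1$, so this sum collapses to its single nonzero term, namely $a^{i+1}c$. In particular, there is no convergence worry: although the sum is formally infinite because $m=\infty$, all but one of its terms are zero, so it is manifestly well-defined (consistent with the quasi-lower-triangularity of $S$ and $H_c$ recorded in Subsection \ref{subsect.form.inf-mat}).

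Next I would compute the right-hand side, which is immediate from the definition (\ref{eq.Hc=}) of $H_{ac}$: for each $i\in\mathbb{N}$,
\[
\left(H_{ac}\right)_{i,0}=a^{i}\left(ac\right)=a^{i+1}c.
\]
Comparing the two computations gives $\left(SH_c\right)_{i,0}=a^{i+1}c=\left(H_{ac}\right)_{i,0}$ for all $i\in\mathbb{N}$, and hence $SH_c=H_{ac}$.

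I do not expect a genuine obstacle here: the statement is essentially a one-line entrywise verification, and the only potential subtlety -- the well-definedness of an infinite matrix product -- evaporates because each row of $S$ has exactly one nonzero entry. The single point worth getting right is the index bookkeeping, i.e.\ that applying $S$ shifts each column vector up by one position, and that this shift corresponds precisely to multiplying the generating element $c$ on the left by $a$ (turning the entry $a^{i+1}c$ into $a^{i}\left(ac\right)$).
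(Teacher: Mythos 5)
Your proof is correct. It takes a slightly different (and more self-contained) route than the paper: the paper deduces the proposition in two lines from Proposition \ref{prop.SH} \textbf{(b)} --- since $m=\infty$ makes $u+1<m$ automatic, that proposition gives $e_{u}^{T}SH_{c}=e_{u}^{T}H_{ac}$ for every $u\in\mathbb{N}$, and then Lemma \ref{lem.ei.row} converts this row-by-row agreement into the equality $SH_{c}=H_{ac}$ (details left to the reader). That route leans on machinery the paper needs anyway for the finite-$m$ case, namely the row-extraction Lemma \ref{lem.ei.row} and the identity $e_{u}^{T}S=e_{u+1}^{T}$, so the infinite case comes essentially for free. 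You instead compute the entries of $SH_{c}$ directly from the product formula (\ref{eq.ABik=}), observing that $\left(SH_{c}\right)_{i,0}=\sum_{j}\left[j=i+1\right]a^{j}c$ collapses to $a^{i+1}c=\left(H_{ac}\right)_{i,0}$; this makes the well-definedness of the infinite sum completely transparent and avoids invoking any lemmas, at the small cost of not producing the reusable identity $e_{u}^{T}S=e_{u+1}^{T}$ that the paper exploits later (in Lemmas \ref{lem.equivk.S} and \ref{lem.equivk.Sc}). In substance both arguments are the same one-line verification that the $i$-th entry of each side equals $a^{i+1}c$.
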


\begin{proposition}
\label{prop.UHinf}Let $b\in\mathbb{L}$ and $c\in\mathbb{L}$. Then, $U_{b}%
H_{c}=H_{bc}$.
\end{proposition}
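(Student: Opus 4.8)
The plan is to prove the stated equality of column vectors entrywise. Since $U_{b}$ is lower-triangular, its $(i,j)$-th entry vanishes whenever $j>i$, so the $i$-th entry of $U_{b}H_{c}$ is the \emph{finite} sum
\[
\left(U_{b}H_{c}\right)_{i}=\sum_{j=0}^{i}\binom{i}{j}\operatorname{ad}_{a}^{i-j}(b)\,a^{j}c ;
\]
in particular this is well-defined even when $m=\infty$, and no convergence subtleties arise. The $i$-th entry of $H_{bc}$ is $a^{i}(bc)=a^{i}bc$. Hence it suffices to prove, for every $i\in\mathbb{N}$ with $i<m$, the identity
\[
\sum_{j=0}^{i}\binom{i}{j}\operatorname{ad}_{a}^{i-j}(b)\,a^{j}=a^{i}b
\qquad\text{in }\mathbb{L};
\]
multiplying this on the right by $c$ then completes the comparison.

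The crux is this binomial-type identity, and I would derive it by passing to the endomorphism ring of the additive group of $\mathbb{L}$. Introduce the left- and right-multiplication operators $L_{a},R_{a}\colon\mathbb{L}\to\mathbb{L}$ given by $L_{a}(x)=ax$ and $R_{a}(x)=xa$. A one-line check shows $L_{a}R_{a}=R_{a}L_{a}$ (both send $x\mapsto axa$) and $\operatorname{ad}_{a}=L_{a}-R_{a}$, whence $L_{a}=\operatorname{ad}_{a}+R_{a}$ with $\operatorname{ad}_{a}$ and $R_{a}$ commuting (being polynomials in the commuting operators $L_{a},R_{a}$). The operators $\operatorname{ad}_{a}$ and $R_{a}$ therefore satisfy the binomial theorem, giving
\[
L_{a}^{\,i}=\left(\operatorname{ad}_{a}+R_{a}\right)^{i}=\sum_{k=0}^{i}\binom{i}{k}\operatorname{ad}_{a}^{k}R_{a}^{\,i-k}.
\]
Applying both sides to $b$, using $L_{a}^{\,i}(b)=a^{i}b$ and (by the commutativity just noted) $\left(\operatorname{ad}_{a}^{k}R_{a}^{\,i-k}\right)(b)=R_{a}^{\,i-k}\!\left(\operatorname{ad}_{a}^{k}(b)\right)=\operatorname{ad}_{a}^{k}(b)\,a^{i-k}$, yields $a^{i}b=\sum_{k=0}^{i}\binom{i}{k}\operatorname{ad}_{a}^{k}(b)\,a^{i-k}$; the substitution $j=i-k$ (together with $\binom{i}{i-k}=\binom{i}{k}$) turns this into the identity displayed above.

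If one prefers to stay inside $\mathbb{L}$, the same identity can be proved by induction on $i$: the base case $i=0$ reads $b=b$, and for the step one multiplies the identity for $i$ on the left by $a$ and rewrites $a\,\operatorname{ad}_{a}^{\,i-j}(b)=\operatorname{ad}_{a}^{\,i-j+1}(b)+\operatorname{ad}_{a}^{\,i-j}(b)\,a$ (the defining relation of $\operatorname{ad}_{a}$), then reindexes and collects the two resulting sums using Pascal's rule $\binom{i}{j}+\binom{i}{j-1}=\binom{i+1}{j}$.

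I expect the only real obstacle to be the binomial identity itself; everything around it is bookkeeping. The operator viewpoint reduces it to the commutativity $L_{a}R_{a}=R_{a}L_{a}$ plus a single application of the binomial theorem, which is the cleanest route, while the purely internal induction is elementary but demands the usual careful index-shifting and an appeal to Pascal's rule. The reduction to the identity, and the fact that the case $m=\infty$ needs no separate treatment, are immediate from the lower-triangularity of $U_{b}$.
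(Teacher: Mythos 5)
Your proposal is correct and follows essentially the same route as the paper: an entrywise comparison of the two column vectors (using lower-triangularity to truncate the sum at $j=i$), reduced to the identity $a^{i}b=\sum_{j=0}^{i}\binom{i}{j}\operatorname{ad}_{a}^{i-j}(b)\,a^{j}$, which the paper isolates as its Lemma \ref{lem.ada1}. Your derivation of that identity via $L_{a}$, $R_{a}$, the commutativity $L_{a}R_{a}=R_{a}L_{a}$, and the binomial theorem for commuting ring elements is exactly the paper's proof of that lemma, up to the cosmetic difference of expanding $(\operatorname{ad}_{a}+R_{a})^{i}$ rather than $(R_{a}+\operatorname{ad}_{a})^{i}$.
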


If $m=\infty$, then we can use Proposition \ref{prop.SHinf} and Proposition
\ref{prop.UHinf} to conclude that $\left(  U_{b}S\right)  H_{c}=H_{bac}$ for
each $b\in\mathbb{L}$ and $c\in\mathbb{L}$. Thus, by induction, we can
conclude that $\left(  U_{b}S\right)  ^{n}H_{c}=H_{\left(  ba\right)  ^{n}c}$
for each $n\in\mathbb{N}$, $b\in\mathbb{L}$ and $c\in\mathbb{L}$ (as long as
$m=\infty$). Applying this to $c=1$ and multiplying the resulting equality by
$e_{0}^{T}$ on both sides, we then obtain $e_{0}^{T}\left(  U_{b}S\right)
^{n}H_{1}=e_{0}^{T}H_{\left(  ba\right)  ^{n}1}=\left(  ba\right)  ^{n}$ (the
last equality sign is easy). This proves Theorem \ref{thm.gen} in the case
when $m=\infty$.

Unfortunately, this argument breaks down if $m\in\mathbb{N}$. In fact,
Proposition \ref{prop.SHinf} is true only for $m=\infty$; otherwise, the
vectors $SH_{c}$ and $H_{ac}$ differ in their last entry. This
\textquotedblleft corruption\textquotedblright\ then spreads further to
earlier and earlier entries as we inductively multiply by $U_{b}$ and by $S$.
What saves us is that it only spreads one entry at a time when we multiply by
$S$, and does not spread at all when we multiply by $U_{b}$; thus it does not
reach the first (i.e., $0$-th) entry as long as we multiply by $U_{b}S$ only
$n$ times. But this needs to be formalized and proved. This is what we shall
be doing further below.

\subsection{A lemma about $\operatorname*{ad}\nolimits_{a}$}

Before we come to this, however, we need a basic lemma about commutators:

\begin{lemma}
\label{lem.ada1}Let $b\in\mathbb{L}$ and $i\in\mathbb{N}$. Then,%
\[
a^{i}b=\sum_{j=0}^{i}\dbinom{i}{j}\operatorname*{ad}\nolimits_{a}^{i-j}\left(
b\right)  \cdot a^{j}.
\]

\end{lemma}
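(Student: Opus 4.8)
The plan is to prove the identity by induction on $i$. The base case $i=0$ is immediate: the left-hand side is $a^{0}b=b$, while the right-hand side collapses to the single term $\dbinom{0}{0}\operatorname*{ad}\nolimits_{a}^{0}\left(  b\right)  \cdot a^{0}=b$. The entire argument hinges on the elementary rewriting rule $ac=\operatorname*{ad}\nolimits_{a}\left(  c\right)  +ca$, which is just the defining equation $\operatorname*{ad}\nolimits_{a}\left(  c\right)  =ac-ca$ solved for $ac$. Applying this with $c=\operatorname*{ad}\nolimits_{a}^{k}\left(  b\right)  $ yields the more convenient form $a\operatorname*{ad}\nolimits_{a}^{k}\left(  b\right)  =\operatorname*{ad}\nolimits_{a}^{k+1}\left(  b\right)  +\operatorname*{ad}\nolimits_{a}^{k}\left(  b\right)  a$, which lets me push a factor of $a$ past any power of $\operatorname*{ad}\nolimits_{a}$ applied to $b$, at the cost of raising that power by one.

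For the inductive step, I assume the claim for some $i\in\mathbb{N}$ and multiply the asserted equality on the left by $a$, obtaining $a^{i+1}b=\sum_{j=0}^{i}\dbinom{i}{j}\,a\operatorname*{ad}\nolimits_{a}^{i-j}\left(  b\right)  \cdot a^{j}$. Applying the rewriting rule to each factor $a\operatorname*{ad}\nolimits_{a}^{i-j}\left(  b\right)  $ splits this into two sums: one in which the power of $\operatorname*{ad}\nolimits_{a}$ has been raised to $i+1-j$ while $a^{j}$ is left unchanged, and one in which the power stays $i-j$ but $a^{j}$ becomes $a^{j+1}$. In the second sum I reindex via $j\mapsto j-1$, so that both sums then run over terms of the common shape $\operatorname*{ad}\nolimits_{a}^{i+1-j}\left(  b\right)  \cdot a^{j}$.

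The main (and essentially only) subtlety is the bookkeeping of the summation ranges and binomial coefficients after this reindexing. The first sum contributes the coefficient $\dbinom{i}{j}$ over the range $0\leq j\leq i$, while the reindexed second sum contributes $\dbinom{i}{j-1}$ over the range $1\leq j\leq i+1$; interpreting the boundary terms through $\dbinom{i}{-1}=0$ and $\dbinom{i}{i+1}=0$, both ranges extend harmlessly to $0\leq j\leq i+1$. Pascal's recurrence $\dbinom{i}{j}+\dbinom{i}{j-1}=\dbinom{i+1}{j}$ then merges the two sums into $\sum_{j=0}^{i+1}\dbinom{i+1}{j}\operatorname*{ad}\nolimits_{a}^{i+1-j}\left(  b\right)  \cdot a^{j}$, which is precisely the asserted formula for $i+1$. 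This completes the induction and hence the proof.
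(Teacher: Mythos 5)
Your proof is correct, but it takes a genuinely different route from the paper. You carry out the direct induction on $i$: multiplying the induction hypothesis on the left by $a$, pushing $a$ past each $\operatorname*{ad}\nolimits_{a}^{i-j}\left(b\right)$ via the rewriting rule $ac=\operatorname*{ad}\nolimits_{a}\left(c\right)+ca$, reindexing, and merging the two resulting sums with Pascal's recurrence $\dbinom{i}{j}+\dbinom{i}{j-1}=\dbinom{i+1}{j}$ (the boundary conventions $\dbinom{i}{-1}=0$ and $\dbinom{i}{i+1}=0$ are handled correctly). The paper explicitly acknowledges that this induction works, but opts for what it calls a slicker argument: it passes to the endomorphism ring $\operatorname*{End}\mathbb{L}$ of the $\mathbb{Z}$-module $\mathbb{L}$, introduces the left- and right-multiplication operators $L_{a}$ and $R_{a}$, observes that $\operatorname*{ad}\nolimits_{a}=L_{a}-R_{a}$ and that $R_{a}$ commutes with $\operatorname*{ad}\nolimits_{a}$, and then applies the binomial formula for two commuting elements of a ring to $L_{a}=R_{a}+\operatorname*{ad}\nolimits_{a}$, finally evaluating the resulting operator identity at $b$. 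The trade-off is this: your argument is elementary and self-contained, needing nothing beyond Pascal's rule, but it redoes the binomial bookkeeping by hand; the paper's argument delegates exactly that bookkeeping to the already-known binomial theorem (its Proposition~\ref{prop.binom}), which both shortens the proof and makes conceptually transparent why binomial coefficients appear here at all, namely because $a^{i}b=L_{a}^{i}\left(b\right)$ is the $i$-th power of a sum of two commuting operators applied to $b$.
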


It is not hard to prove Lemma \ref{lem.ada1} by induction on $i$. However,
there is a slicker proof. It relies on the following well-known fact:

\begin{proposition}
\label{prop.binom}Let $\mathbb{A}$ be a ring. Let $x$ and $y$ be two elements
of $\mathbb{A}$ such that $xy=yx$. Then,%
\[
\left(  x+y\right)  ^{n}=\sum_{k=0}^{n}\dbinom{n}{k}x^{k}y^{n-k}%
\ \ \ \ \ \ \ \ \ \ \text{for every }n\in\mathbb{N}\text{.}%
\]

\end{proposition}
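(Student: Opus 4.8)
The plan is to prove the proposition by induction on $n$. The base case $n=0$ is immediate: the left-hand side $\left(x+y\right)^{0}$ equals $1$, while the right-hand side $\sum_{k=0}^{0}\dbinom{0}{k}x^{k}y^{0-k}$ reduces to its single term $\dbinom{0}{0}x^{0}y^{0}=1$. So both sides agree.

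For the induction step, I would fix $n\in\mathbb{N}$, assume the claim holds for $n$, and write $\left(x+y\right)^{n+1}=\left(x+y\right)^{n}\left(x+y\right)$. Substituting the induction hypothesis and distributing over the factor $x+y$ gives
\[
\left(\sum_{k=0}^{n}\dbinom{n}{k}x^{k}y^{n-k}\right)\left(x+y\right)=\sum_{k=0}^{n}\dbinom{n}{k}x^{k}y^{n-k}x+\sum_{k=0}^{n}\dbinom{n}{k}x^{k}y^{n-k+1}.
\]
The one place where the hypothesis $xy=yx$ is actually used is in rewriting $y^{n-k}x$ as $xy^{n-k}$, so that $x^{k}y^{n-k}x=x^{k+1}y^{n-k}$. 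This needs the auxiliary fact that $x$ commutes not merely with $y$ but with every power of $y$, which I would dispatch by a one-line secondary induction: if $xy^{j}=y^{j}x$, then $xy^{j+1}=\left(xy^{j}\right)y=y^{j}xy=y^{j}yx=y^{j+1}x$.

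After this rewriting, the first sum becomes $\sum_{k=0}^{n}\dbinom{n}{k}x^{k+1}y^{n-k}$, and reindexing via $k\mapsto k-1$ turns it into $\sum_{k=1}^{n+1}\dbinom{n}{k-1}x^{k}y^{n+1-k}$. Combining this with the second sum $\sum_{k=0}^{n}\dbinom{n}{k}x^{k}y^{n+1-k}$ and collecting the coefficient of each monomial $x^{k}y^{n+1-k}$, the Pascal recurrence $\dbinom{n}{k-1}+\dbinom{n}{k}=\dbinom{n+1}{k}$ handles the interior indices $1\leq k\leq n$, while the boundary terms $k=0$ and $k=n+1$ contribute $\dbinom{n}{0}=\dbinom{n+1}{0}$ and $\dbinom{n}{n}=\dbinom{n+1}{n+1}$ respectively. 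The result is exactly $\sum_{k=0}^{n+1}\dbinom{n+1}{k}x^{k}y^{n+1-k}$, completing the induction.

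The main obstacle here is bookkeeping rather than depth: one must check that the noncommutativity of $\mathbb{A}$ causes no harm, which it does not, because the only rearrangements the argument ever demands are moving $x$ past powers of $y$, and these are permitted by the hypothesis. Since no other reordering of factors is needed, the classical commutative proof carries over verbatim once the commuting-powers lemma is in place.
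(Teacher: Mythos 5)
Your proof is correct: the base case, the secondary induction showing $xy^{j}=y^{j}x$ for all $j\in\mathbb{N}$, the reindexing, and the application of Pascal's recurrence at the interior and boundary indices are all sound, and you correctly isolate the single point where the hypothesis $xy=yx$ is needed. Note that the paper itself offers no proof of this proposition at all --- it cites it as a well-known generalization of the binomial formula and uses it as a black box in the proof of Lemma \ref{lem.ada1} --- so your standard induction argument is exactly the kind of routine verification the author left to the reader, and it fills that gap completely.
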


Proposition \ref{prop.binom} is a straightforward generalization of the
binomial formula to two commuting elements of an arbitrary ring.

\begin{proof}
[Proof of Lemma \ref{lem.ada1}.]Let $\operatorname*{End}\mathbb{L}$ denote the
endomorphism ring of the $\mathbb{Z}$-module $\mathbb{L}$. Thus, the elements
of $\operatorname*{End}\mathbb{L}$ are the $\mathbb{Z}$-linear maps from
$\mathbb{L}$ to $\mathbb{L}$.

Define the map $L_{a}:\mathbb{L}\rightarrow\mathbb{L}$ by%
\[
\left(  L_{a}\left(  c\right)  =ac\ \ \ \ \ \ \ \ \ \ \text{for all }%
c\in\mathbb{L}\right)  .
\]
Clearly, this map $L_{a}$ is $\mathbb{Z}$-linear; thus, it belongs to
$\operatorname*{End}\mathbb{L}$.

Define the map $R_{a}:\mathbb{L}\rightarrow\mathbb{L}$ by%
\[
\left(  R_{a}\left(  c\right)  =ca\ \ \ \ \ \ \ \ \ \ \text{for all }%
c\in\mathbb{L}\right)  .
\]
Clearly, this map $R_{a}$ is $\mathbb{Z}$-linear; thus, it belongs to
$\operatorname*{End}\mathbb{L}$.

We have $\operatorname*{ad}\nolimits_{a}=L_{a}-R_{a}$%
\ \ \ \ \footnote{\textit{Proof.} Let $c\in\mathbb{L}$. Then, $L_{a}\left(
c\right)  =ac$ (by the definition of $L_{a}$) and $R_{a}\left(  c\right)  =ca$
(by the definition of $R_{a}$). Hence,
\[
\left(  L_{a}-R_{a}\right)  \left(  c\right)  =\underbrace{L_{a}\left(
c\right)  }_{=ac}-\underbrace{R_{a}\left(  c\right)  }_{=ca}=ac-ca.
\]
Comparing this with%
\begin{align*}
\operatorname*{ad}\nolimits_{a}\left(  c\right)   &  =\left[  a,c\right]
\ \ \ \ \ \ \ \ \ \ \left(  \text{by the definition of }\operatorname*{ad}%
\nolimits_{a}\right) \\
&  =ac-ca\ \ \ \ \ \ \ \ \ \ \left(  \text{by the definition of }\left[
a,c\right]  \right)  ,
\end{align*}
we obtain $\operatorname*{ad}\nolimits_{a}\left(  c\right)  =\left(
L_{a}-R_{a}\right)  \left(  c\right)  $.
\par
Now, forget that we fixed $c$. We thus have shown that $\operatorname*{ad}%
\nolimits_{a}\left(  c\right)  =\left(  L_{a}-R_{a}\right)  \left(  c\right)
$ for each $c\in\mathbb{L}$. In other words, $\operatorname*{ad}%
\nolimits_{a}=L_{a}-R_{a}$. Qed.}. Hence, $\operatorname*{ad}\nolimits_{a}$
belongs to $\operatorname*{End}\mathbb{L}$ (since $L_{a}$ and $R_{a}$ belong
to $\operatorname*{End}\mathbb{L}$). Also, $R_{a}+\operatorname*{ad}%
\nolimits_{a}=L_{a}$ (since $\operatorname*{ad}\nolimits_{a}=L_{a}-R_{a}$).

Furthermore, the elements $L_{a}$ and $R_{a}$ of $\operatorname*{End}%
\mathbb{L}$ satisfy $R_{a}\circ L_{a}=L_{a}\circ R_{a}$%
\ \ \ \ \footnote{\textit{Proof.} Let $c\in\mathbb{L}$. The definition of
$L_{a}$ yields $L_{a}\left(  c\right)  =ac$ and $L_{a}\left(  R_{a}\left(
c\right)  \right)  =a\cdot R_{a}\left(  c\right)  $. The definition of $R_{a}$
yields $R_{a}\left(  c\right)  =ca$ and $R_{a}\left(  L_{a}\left(  c\right)
\right)  =L_{a}\left(  c\right)  \cdot a$. Now, comparing%
\[
\left(  L_{a}\circ R_{a}\right)  \left(  c\right)  =L_{a}\left(  R_{a}\left(
c\right)  \right)  =a\cdot\underbrace{R_{a}\left(  c\right)  }_{=ca}=a\cdot
ca=aca
\]
with%
\[
\left(  R_{a}\circ L_{a}\right)  \left(  c\right)  =R_{a}\left(  L_{a}\left(
c\right)  \right)  =\underbrace{L_{a}\left(  c\right)  }_{=ac}\cdot a=ac\cdot
a=aca,
\]
we obtain $\left(  R_{a}\circ L_{a}\right)  \left(  c\right)  =\left(
L_{a}\circ R_{a}\right)  \left(  c\right)  $.
\par
Forget that we fixed $c$. We thus have proven that $\left(  R_{a}\circ
L_{a}\right)  \left(  c\right)  =\left(  L_{a}\circ R_{a}\right)  \left(
c\right)  $ for each $c\in\mathbb{L}$. In other words, $R_{a}\circ L_{a}%
=L_{a}\circ R_{a}$.}. But $\operatorname*{End}\mathbb{L}$ is a ring with
multiplication $\circ$; thus, in particular, the operation $\circ$ is
distributive (over $+$) on $\operatorname*{End}\mathbb{L}$. Since $L_{a}$,
$R_{a}$ and $\operatorname*{ad}\nolimits_{a}$ belong to $\operatorname*{End}%
\mathbb{L}$, we thus have%
\begin{align*}
R_{a}\circ\underbrace{\operatorname*{ad}\nolimits_{a}}_{=L_{a}-R_{a}}  &
=R_{a}\circ\left(  L_{a}-R_{a}\right)  =\underbrace{R_{a}\circ L_{a}}%
_{=L_{a}\circ R_{a}}-R_{a}\circ R_{a}\\
&  =L_{a}\circ R_{a}-R_{a}\circ R_{a}=\underbrace{\left(  L_{a}-R_{a}\right)
}_{=\operatorname*{ad}\nolimits_{a}}\circ R_{a}=\operatorname*{ad}%
\nolimits_{a}\circ R_{a}.
\end{align*}
Hence, Proposition \ref{prop.binom} (applied to $\mathbb{A}%
=\operatorname*{End}\mathbb{L}$, $x=R_{a}$, $y=\operatorname*{ad}%
\nolimits_{a}$ and $n=i$) yields%
\[
\left(  R_{a}+\operatorname*{ad}\nolimits_{a}\right)  ^{i}=\sum_{k=0}%
^{i}\dbinom{i}{k}R_{a}^{k}\circ\operatorname*{ad}\nolimits_{a}^{i-k}%
=\sum_{j=0}^{i}\dbinom{i}{j}R_{a}^{j}\circ\operatorname*{ad}\nolimits_{a}%
^{i-j}%
\]
(here, we have renamed the index $k$ as $j$ in the sum). In view of
$R_{a}+\operatorname*{ad}\nolimits_{a}=L_{a}$, this rewrites as%
\begin{equation}
L_{a}^{i}=\sum_{j=0}^{i}\dbinom{i}{j}R_{a}^{j}\circ\operatorname*{ad}%
\nolimits_{a}^{i-j}. \label{pf.lem.ada1.5}%
\end{equation}

But each $k\in\mathbb{N}$ satisfies%
\begin{equation}
L_{a}^{k}\left(  c\right)  =a^{k}c\ \ \ \ \ \ \ \ \ \ \text{for each }%
c\in\mathbb{L}. \label{pf.lem.ada1.Lakc}%
\end{equation}

\begin{vershort}
[\textit{Proof of (\ref{pf.lem.ada1.Lakc}):} It is straightforward to prove
(\ref{pf.lem.ada1.Lakc}) by induction on $k$.]
\end{vershort}

\begin{verlong}
[\textit{Proof of (\ref{pf.lem.ada1.Lakc}):} We shall prove
(\ref{pf.lem.ada1.Lakc}) by induction on $k$:

\textit{Induction base:} We have $\underbrace{L_{a}^{0}}_{=\operatorname*{id}%
}\left(  c\right)  =\operatorname*{id}\left(  c\right)  =c=a^{0}c$ (since
$\underbrace{a^{0}}_{=1}c=c$) for each $c\in\mathbb{L}$. In other words,
(\ref{pf.lem.ada1.Lakc}) holds for $k=0$. This completes the induction base.

\textit{Induction step:} Let $g\in\mathbb{N}$. Assume that
(\ref{pf.lem.ada1.Lakc}) holds for $k=g$. We must prove that
(\ref{pf.lem.ada1.Lakc}) holds for $k=g+1$.

We have assumed that (\ref{pf.lem.ada1.Lakc}) holds for $k=g$. In other words,
we have%
\begin{equation}
L_{a}^{g}\left(  c\right)  =a^{g}c\ \ \ \ \ \ \ \ \ \ \text{for each }%
c\in\mathbb{L}. \label{pf.lem.ada1.Lakc.pf.IH}%
\end{equation}

Now, for each $c\in\mathbb{L}$, we have
\begin{align*}
\underbrace{L_{a}^{g+1}}_{=L_{a}\circ L_{a}^{g}}\left(  c\right)   &  =\left(
L_{a}\circ L_{a}^{g}\right)  \left(  c\right)  =L_{a}\left(  L_{a}^{g}\left(
c\right)  \right)  =a\cdot\underbrace{L_{a}^{g}\left(  c\right)
}_{\substack{=a^{g}c\\\text{(by (\ref{pf.lem.ada1.Lakc.pf.IH}))}%
}}\ \ \ \ \ \ \ \ \ \ \left(  \text{by the definition of }L_{a}\right) \\
&  =\underbrace{a\cdot a^{g}}_{=a^{g+1}}c=a^{g+1}c.
\end{align*}

Thus, we have shown that $L_{a}^{g+1}\left(  c\right)  =a^{g+1}c$ for each
$c\in\mathbb{L}$. In other words, (\ref{pf.lem.ada1.Lakc}) holds for $k=g+1$.
This completes the induction step. Thus, the proof of (\ref{pf.lem.ada1.Lakc})
is complete.]
\end{verlong}

Furthermore, each $k\in\mathbb{N}$ satisfies%
\begin{equation}
R_{a}^{k}\left(  c\right)  =ca^{k}\ \ \ \ \ \ \ \ \ \ \text{for each }%
c\in\mathbb{L}. \label{pf.lem.ada1.Rakc}%
\end{equation}

\begin{vershort}
[\textit{Proof of (\ref{pf.lem.ada1.Rakc}):} It is straightforward to prove
(\ref{pf.lem.ada1.Rakc}) by induction on $k$.]
\end{vershort}

\begin{verlong}
[\textit{Proof of (\ref{pf.lem.ada1.Rakc}):} We shall prove
(\ref{pf.lem.ada1.Rakc}) by induction on $k$:

\textit{Induction base:} We have $\underbrace{R_{a}^{0}}_{=\operatorname*{id}%
}\left(  c\right)  =\operatorname*{id}\left(  c\right)  =c=ca^{0}$ (since
$c\underbrace{a^{0}}_{=1}=c$) for each $c\in\mathbb{L}$. In other words,
(\ref{pf.lem.ada1.Rakc}) holds for $k=0$. This completes the induction base.

\textit{Induction step:} Let $g\in\mathbb{N}$. Assume that
(\ref{pf.lem.ada1.Rakc}) holds for $k=g$. We must prove that
(\ref{pf.lem.ada1.Rakc}) holds for $k=g+1$.

We have assumed that (\ref{pf.lem.ada1.Rakc}) holds for $k=g$. In other words,
we have%
\begin{equation}
R_{a}^{g}\left(  c\right)  =ca^{g}\ \ \ \ \ \ \ \ \ \ \text{for each }%
c\in\mathbb{L}. \label{pf.lem.ada1.Rakc.pf.IH}%
\end{equation}

Now, for each $c\in\mathbb{L}$, we have
\begin{align*}
\underbrace{R_{a}^{g+1}}_{=R_{a}\circ R_{a}^{g}}\left(  c\right)   &  =\left(
R_{a}\circ R_{a}^{g}\right)  \left(  c\right)  =R_{a}\left(  R_{a}^{g}\left(
c\right)  \right)  =\underbrace{R_{a}^{g}\left(  c\right)  }%
_{\substack{=ca^{g}\\\text{(by (\ref{pf.lem.ada1.Rakc.pf.IH}))}}}\cdot
a\ \ \ \ \ \ \ \ \ \ \left(  \text{by the definition of }R_{a}\right) \\
&  =c\underbrace{a^{g}\cdot a}_{=a^{g+1}}=ca^{g+1}.
\end{align*}

Thus, we have shown that $R_{a}^{g+1}\left(  c\right)  =ca^{g+1}$ for each
$c\in\mathbb{L}$. In other words, (\ref{pf.lem.ada1.Rakc}) holds for $k=g+1$.
This completes the induction step. Thus, the proof of (\ref{pf.lem.ada1.Rakc})
is complete.]
\end{verlong}

Now, applying both sides of the equality (\ref{pf.lem.ada1.5}) to $b$, we
obtain%
\begin{align*}
L_{a}^{i}\left(  b\right)   &  =\left(  \sum_{j=0}^{i}\dbinom{i}{j}R_{a}%
^{j}\circ\operatorname*{ad}\nolimits_{a}^{i-j}\right)  \left(  b\right)
=\sum_{j=0}^{i}\dbinom{i}{j}\underbrace{\left(  R_{a}^{j}\circ
\operatorname*{ad}\nolimits_{a}^{i-j}\right)  \left(  b\right)  }%
_{\substack{=R_{a}^{j}\left(  \operatorname*{ad}\nolimits_{a}^{i-j}\left(
b\right)  \right)  \\=\operatorname*{ad}\nolimits_{a}^{i-j}\left(  b\right)
\cdot a^{j}\\\text{(by (\ref{pf.lem.ada1.Rakc}), applied}\\\text{to }k=j\text{
and }c=\operatorname*{ad}\nolimits_{a}^{i-j}\left(  b\right)  \text{)}}}\\
&  =\sum_{j=0}^{i}\dbinom{i}{j}\operatorname*{ad}\nolimits_{a}^{i-j}\left(
b\right)  \cdot a^{j}.
\end{align*}
Comparing this with%
\[
L_{a}^{i}\left(  b\right)  =a^{i}b\ \ \ \ \ \ \ \ \ \ \left(  \text{by
(\ref{pf.lem.ada1.Lakc}), applied to }k=i\text{ and }c=b\right)  ,
\]
we obtain
\[
a^{i}b=\sum_{j=0}^{i}\dbinom{i}{j}\operatorname*{ad}\nolimits_{a}^{i-j}\left(
b\right)  \cdot a^{j}.
\]
This proves Lemma \ref{lem.ada1}.
\end{proof}

\subsection{Formulas for $e_{i}^{T}A$}

We next recall a simple property of the vectors $e_{i}$:

\begin{lemma}
\label{lem.ei.row}Let $\ell\in\mathbb{N}\cup\left\{  \infty\right\}  $ and
$i\in\mathbb{N}$ be such that $0\leq i<m$. Let $A$ be an $m\times\ell$-matrix.
Then,%
\[
e_{i}^{T}A=\left(  \text{the }i\text{-th row of }A\right)  .
\]

\end{lemma}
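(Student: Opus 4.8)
The plan is to prove the claim by a single entrywise computation, unwinding the definition of matrix multiplication. Since $e_i$ is a column vector of size $m$ whose $p$-th entry is $\left[p=i\right]$ (by (\ref{eq.ej=})), its transpose $e_i^T$ is a $1\times m$-matrix (a row vector) whose $\left(0,p\right)$-th entry is $\left[p=i\right]$. The product $e_i^T A$ is therefore a $1\times\ell$-matrix, that is, a row vector, and in order to identify it with the $i$-th row of $A$ it suffices to show that its $\left(0,j\right)$-th entry equals $A_{i,j}$ for every $j$ with $0\leq j<\ell$.

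First I would apply the multiplication formula (\ref{eq.ABik=}) to obtain
\[
\left(e_i^T A\right)_{0,j}=\sum_{p=0}^{m-1}\left(e_i^T\right)_{0,p}A_{p,j}=\sum_{p=0}^{m-1}\left[p=i\right]A_{p,j}.
\]
Because $0\leq i<m$, the index $i$ lies in the range $\left\{0,1,\ldots,m-1\right\}$ over which the sum runs, while the factor $\left[p=i\right]$ vanishes for every $p\neq i$. Hence all summands except the one for $p=i$ are zero, and the sum collapses to the single surviving term $A_{i,j}$. This last observation also disposes of the well-definedness question when $m=\infty$: the a priori infinite sum has at most one nonzero summand (the one coming from the unique nonzero entry of $e_i^T$), so it is well-defined regardless of $A$. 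Note that here one cannot simply invoke the multiplication proposition of Subsection \ref{subsect.form.inf-mat}, since $A$ is an arbitrary $m\times\ell$-matrix and need not be quasi-lower-triangular when $\ell=\infty$; it is precisely the single-nonzero-entry structure of $e_i^T$ that rescues the product.

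This yields $\left(e_i^T A\right)_{0,j}=A_{i,j}$ for all admissible $j$, which is exactly the statement that $e_i^T A$ equals the $i$-th row of $A$. I do not expect any genuine obstacle: the whole argument is the single collapsing sum displayed above, and the only point needing attention is the bookkeeping for the infinite case, handled by the remark that $e_i^T$ has just one nonzero entry.
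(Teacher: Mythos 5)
Your proof is correct and takes essentially the same route as the paper's: expand $\left(e_i^TA\right)_{0,j}$ via the multiplication formula (\ref{eq.ABik=}) and collapse the sum $\sum_{p}\left[p=i\right]A_{p,j}$ to the single surviving term $A_{i,j}$. Your remark on well-definedness in the infinite case also matches the paper's own observation that the product exists because $e_i^T$ has only one nonzero entry.
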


Note that the product $e_{i}^{T}A$ on the left hand side of Lemma
\ref{lem.ei.row} is always well-defined, even when $\ell$ and $m$ are $\infty
$. (This stems from the fact that the row vector $e_{i}^{T}$ has only one
nonzero entry.)

Lemma \ref{lem.ei.row} says that the $i$-th row of $A$ can be extracted by
multiplying $A$ from the left by the row vector $e_{i}^{T}=\left(
\begin{array}
[c]{ccccccccc}%
0 & 0 & \cdots & 0 & 1 & 0 & 0 & \cdots & 0
\end{array}
\right)  $ (here, the $1$ is at the $i$-th position). This is a known fact
from linear algebra and is easy to prove.

\begin{verlong}
\begin{proof}
[Proof of Lemma \ref{lem.ei.row}.]Applying (\ref{eq.ej=}) to $j=i$, we find%
\[
e_{i}=\left(  \left[  p=i\right]  \right)  _{0\leq p<m,\ 0\leq q<1}.
\]
Thus, by the definition of the transpose of a matrix, we obtain%
\[
e_{i}^{T}=\left(  \left[  q=i\right]  \right)  _{0\leq p<1,\ 0\leq q<m}.
\]
Hence,%
\begin{equation}
\left(  e_{i}^{T}\right)  _{p,q}=\left[  q=i\right]
\label{pf.lem.ei.row.eiTpq=}%
\end{equation}
for any integers $p$ and $q$ satisfying $0\leq p<1$ and $0\leq q<m$.

Note that $i\in\left\{  0,1,\ldots,m-1\right\}  $ (since $0\leq i<m$). (Recall
that $\left\{  0,1,\ldots,\infty-1\right\}  =\mathbb{N}$.)

Now, for any integers $p$ and $q$ satisfying $0\leq p<1$ and $0\leq q<\ell$,
we have%
\begin{align}
\left(  e_{i}^{T}A\right)  _{p,q}  &  =\underbrace{\sum_{j=0}^{m-1}}%
_{=\sum_{j\in\left\{  0,1,\ldots,m-1\right\}  }}\underbrace{\left(  e_{i}%
^{T}\right)  _{p,j}}_{\substack{=\left[  j=i\right]  \\\text{(by
(\ref{pf.lem.ei.row.eiTpq=}),}\\\text{applied to }j\\\text{instead of
}q\text{)}}}A_{j,q}\nonumber\\
&  \ \ \ \ \ \ \ \ \ \ \left(
\begin{array}
[c]{c}%
\text{by (\ref{eq.ABik=}), applied to }1\text{, }m\text{, }\ell\text{, }%
e_{i}^{T}\text{, }A\text{, }p\text{ and }q\\
\text{instead of }u\text{, }v\text{, }w\text{, }A\text{, }B\text{, }i\text{
and }k
\end{array}
\right) \nonumber\\
&  =\sum_{j\in\left\{  0,1,\ldots,m-1\right\}  }\left[  j=i\right]
A_{j,q}=\underbrace{\left[  i=i\right]  }_{\substack{=1\\\text{(since
}i=i\text{)}}}A_{i,q}+\sum_{\substack{j\in\left\{  0,1,\ldots,m-1\right\}
;\\j\neq i}}\underbrace{\left[  j=i\right]  }_{\substack{=0\\\text{(since
}j\neq i\text{)}}}A_{j,q}\nonumber\\
&  \ \ \ \ \ \ \ \ \ \ \left(
\begin{array}
[c]{c}%
\text{here, we have split off the addend for }j=i\\
\text{from the sum (since }i\in\left\{  0,1,\ldots,m-1\right\}  \text{)}%
\end{array}
\right) \nonumber\\
&  =A_{i,q}+\underbrace{\sum_{\substack{j\in\left\{  0,1,\ldots,m-1\right\}
;\\j\neq i}}0A_{j,q}}_{=0}=A_{i,q}. \label{pf.lem.ei.row.4}%
\end{align}

But $e_{i}^{T}A$ is a $1\times\ell$-matrix. Thus,%
\begin{align*}
e_{i}^{T}A  &  =\left(  \left(  e_{i}^{T}A\right)  _{p,q}\right)  _{0\leq
p<1,\ 0\leq q<\ell}=\left(  A_{i,q}\right)  _{0\leq p<1,\ 0\leq q<\ell
}\ \ \ \ \ \ \ \ \ \ \left(  \text{by (\ref{pf.lem.ei.row.4})}\right) \\
&  =\left(
\begin{array}
[c]{cccc}%
A_{i,0} & A_{i,1} & \cdots & A_{i,\ell-1}%
\end{array}
\right)  =\left(  \text{the }i\text{-th row of }A\right)
\end{align*}
(since $A=\left(  A_{i,j}\right)  _{0\leq i<m,\ 0\leq j<\ell}$). This proves
Lemma \ref{lem.ei.row}.
\end{proof}
\end{verlong}

The next lemma is a slight restatement of Lemma \ref{lem.ei.row} in the case
when $\ell=m$:

\begin{lemma}
\label{lem.ei.row2}Let $i\in\mathbb{N}$ be such that $0\leq i<m$. Let $A$ be
an $m\times m$-matrix. Then,%
\[
e_{i}^{T}A=\sum_{j=0}^{m-1}A_{i,j}e_{j}^{T}.
\]

\end{lemma}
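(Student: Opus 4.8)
The plan is to prove Lemma~\ref{lem.ei.row2} by reducing it to Lemma~\ref{lem.ei.row}, which already tells us that $e_i^T A$ equals the $i$-th row of $A$. The only remaining content is to recognize that this row, written as the $1\times m$-matrix $\left(A_{i,0},\ A_{i,1},\ \ldots,\ A_{i,m-1}\right)$, can be re-expressed as the sum $\sum_{j=0}^{m-1} A_{i,j} e_j^T$. So the heart of the argument is the identity that a row vector decomposes as a linear combination of the standard basis row vectors $e_j^T$ with the entries of the row as coefficients.

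First I would invoke Lemma~\ref{lem.ei.row} (applied with $\ell = m$) to obtain $e_i^T A = \left(\text{the }i\text{-th row of }A\right)$, which is the $1\times m$-matrix whose $\left(0,j\right)$-th entry is $A_{i,j}$ for each $j$ with $0 \le j < m$. Then I would compute the $\left(0,q\right)$-th entry of the right-hand side $\sum_{j=0}^{m-1} A_{i,j} e_j^T$ for an arbitrary $q$ with $0 \le q < m$. Since $e_j^T$ has $\left(0,p\right)$-th entry equal to $\left[p = j\right]$, the $\left(0,q\right)$-th entry of $\sum_{j=0}^{m-1} A_{i,j} e_j^T$ equals $\sum_{j=0}^{m-1} A_{i,j}\left[q = j\right]$, and only the term $j = q$ survives, giving $A_{i,q}$. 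This matches the $\left(0,q\right)$-th entry of $e_i^T A$, so the two sides agree entry by entry and hence are equal.

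A minor point worth attending to is the case $m = \infty$: then the sum $\sum_{j=0}^{m-1} A_{i,j} e_j^T = \sum_{j \in \mathbb{N}} A_{i,j} e_j^T$ is formally an infinite sum of $1\times\infty$ row vectors. This causes no difficulty because, evaluated at any fixed column index $q$, all but the single term $j = q$ vanish, so the sum is well-defined coordinatewise; equivalently, each column of the resulting matrix is determined by a finite (in fact single-term) sum. I expect the main obstacle, such as it is, to be purely notational: making the entrywise comparison clean while handling both the finite and infinite $m$ uniformly, and being careful that the indices $p$ (always $0$, since we deal with $1\times m$ row vectors) and $q$ are tracked correctly. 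Beyond that bookkeeping, the lemma is an immediate consequence of Lemma~\ref{lem.ei.row} together with the definition \eqref{eq.ej=} of $e_j$.
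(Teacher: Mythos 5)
Your proposal is correct and follows essentially the same route as the paper's own proof: invoke Lemma \ref{lem.ei.row} to identify $e_{i}^{T}A$ with the $i$-th row of $A$, expand $e_{j}^{T}$ via \eqref{eq.ej=} as the row vector with entries $\left[  q=j\right]  $, and observe that the $q$-th entry of $\sum_{j=0}^{m-1}A_{i,j}e_{j}^{T}$ collapses to the single surviving term $A_{i,q}$. Your additional remark that the sum is coordinatewise finite when $m=\infty$ is a sound observation that the paper leaves implicit.
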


\begin{vershort}
\begin{proof}
[Proof of Lemma \ref{lem.ei.row2}.]For each $j\in\left\{  0,1,\ldots
,m-1\right\}  $, we have $e_{j}=\left(  \left[  p=j\right]  \right)  _{0\leq
p<m,\ 0\leq q<1}$ (by (\ref{eq.ej=})) and thus
\[
e_{j}^{T}=\left(  \left[  q=j\right]  \right)  _{0\leq p<1,\ 0\leq
q<m}\ \ \ \ \ \ \ \ \ \ \left(  \text{by the definition of the transpose of a
matrix}\right)  .
\]
Hence,%
\begin{align}
\sum_{j=0}^{m-1}A_{i,j}\underbrace{e_{j}^{T}}_{=\left(  \left[  q=j\right]
\right)  _{0\leq p<1,\ 0\leq q<m}}  &  =\sum_{j=0}^{m-1}A_{i,j}\left(  \left[
q=j\right]  \right)  _{0\leq p<1,\ 0\leq q<m}\nonumber\\
&  =\left(  \sum_{j=0}^{m-1}A_{i,j}\left[  q=j\right]  \right)  _{0\leq
p<1,\ 0\leq q<m}. \label{pf.lem.ei.row2.short.1}%
\end{align}
But for each $q\in\left\{  0,1,\ldots,m-1\right\}  $, we have%
\begin{align*}
\sum_{j=0}^{m-1}A_{i,j}\left[  q=j\right]   &  =A_{i,q}\underbrace{\left[
q=q\right]  }_{\substack{=1\\\text{(since }q=q\text{)}}}+\sum_{\substack{j\in
\left\{  0,1,\ldots,m-1\right\}  ;\\j\neq q}}A_{i,j}\underbrace{\left[
q=j\right]  }_{\substack{=0\\\text{(because }j\neq q\text{)}}}\\
&  \ \ \ \ \ \ \ \ \ \ \left(
\begin{array}
[c]{c}%
\text{here, we have split off the addend for }j=q\\
\text{from the sum (since }q\in\left\{  0,1,\ldots,m-1\right\}  \text{)}%
\end{array}
\right) \\
&  =A_{i,q}+\underbrace{\sum_{\substack{j\in\left\{  0,1,\ldots,m-1\right\}
;\\j\neq q}}A_{i,j}0}_{=0}=A_{i,q}.
\end{align*}
Hence,
\[
\left(  \sum_{j=0}^{m-1}A_{i,j}\left[  q=j\right]  \right)  _{0\leq
p<1,\ 0\leq q<m}=\left(  A_{i,q}\right)  _{0\leq p<1,\ 0\leq q<m}=\left(
\text{the }i\text{-th row of }A\right)
\]
(since $A=\left(  A_{i,j}\right)  _{0\leq i<m,\ 0\leq j<m}$). Hence,
(\ref{pf.lem.ei.row2.short.1}) becomes%
\[
\sum_{j=0}^{m-1}A_{i,j}e_{j}^{T}=\left(  \sum_{j=0}^{m-1}A_{i,j}\left[
q=j\right]  \right)  _{0\leq p<1,\ 0\leq q<m}=\left(  \text{the }i\text{-th
row of }A\right)  =e_{i}^{T}A
\]
(since Lemma \ref{lem.ei.row} yields $e_{i}^{T}A=\left(  \text{the }i\text{-th
row of }A\right)  $). This proves Lemma \ref{lem.ei.row2}.
\end{proof}
\end{vershort}

\begin{verlong}
\begin{proof}
[Proof of Lemma \ref{lem.ei.row2}.]For each $j\in\left\{  0,1,\ldots
,m-1\right\}  $, we have $e_{j}=\left(  \left[  p=j\right]  \right)  _{0\leq
p<m,\ 0\leq q<1}$ (by (\ref{eq.ej=})) and thus
\[
e_{j}^{T}=\left(  \left[  q=j\right]  \right)  _{0\leq p<1,\ 0\leq
q<m}\ \ \ \ \ \ \ \ \ \ \left(  \text{by the definition of the transpose of a
matrix}\right)  .
\]
Hence,%
\begin{align}
\sum_{j=0}^{m-1}A_{i,j}\underbrace{e_{j}^{T}}_{=\left(  \left[  q=j\right]
\right)  _{0\leq p<1,\ 0\leq q<m}}  &  =\sum_{j=0}^{m-1}A_{i,j}\left(  \left[
q=j\right]  \right)  _{0\leq p<1,\ 0\leq q<m}\nonumber\\
&  =\left(  \sum_{j=0}^{m-1}A_{i,j}\left[  q=j\right]  \right)  _{0\leq
p<1,\ 0\leq q<m}. \label{pf.lem.ei.row2.1}%
\end{align}
But for each $q\in\left\{  0,1,\ldots,m-1\right\}  $, we have%
\begin{align}
&  \underbrace{\sum_{j=0}^{m-1}}_{=\sum_{j\in\left\{  0,1,\ldots,m-1\right\}
}}A_{i,j}\left[  q=j\right] \nonumber\\
&  =\sum_{j\in\left\{  0,1,\ldots,m-1\right\}  }A_{i,j}\left[  q=j\right]
=A_{i,q}\underbrace{\left[  q=q\right]  }_{\substack{=1\\\text{(since
}q=q\text{)}}}+\sum_{\substack{j\in\left\{  0,1,\ldots,m-1\right\}  ;\\j\neq
q}}A_{i,j}\underbrace{\left[  q=j\right]  }_{\substack{=0\\\text{(since }q\neq
j\\\text{(because }j\neq q\text{))}}}\nonumber\\
&  \ \ \ \ \ \ \ \ \ \ \left(
\begin{array}
[c]{c}%
\text{here, we have split off the addend for }j=q\\
\text{from the sum (since }q\in\left\{  0,1,\ldots,m-1\right\}  \text{)}%
\end{array}
\right) \nonumber\\
&  =A_{i,q}+\underbrace{\sum_{\substack{j\in\left\{  0,1,\ldots,m-1\right\}
;\\j\neq q}}A_{i,j}0}_{=0}=A_{i,q}. \label{pf.lem.ei.row2.2}%
\end{align}
Hence, (\ref{pf.lem.ei.row2.1}) becomes
\begin{align*}
\sum_{j=0}^{m-1}A_{i,j}e_{j}^{T}  &  =\left(  \underbrace{\sum_{j=0}%
^{m-1}A_{i,j}\left[  q=j\right]  }_{\substack{=A_{i,q}\\\text{(by
(\ref{pf.lem.ei.row2.2}))}}}\right)  _{0\leq p<1,\ 0\leq q<m}=\left(
A_{i,q}\right)  _{0\leq p<1,\ 0\leq q<m}\\
&  =\left(
\begin{array}
[c]{cccc}%
A_{i,0} & A_{i,1} & \cdots & A_{i,m-1}%
\end{array}
\right) \\
&  =\left(  \text{the }i\text{-th row of }A\right)
\ \ \ \ \ \ \ \ \ \ \left(  \text{since }A=\left(  A_{i,j}\right)  _{0\leq
i<m,\ 0\leq j<m}\right) \\
&  =e_{i}^{T}A
\end{align*}
(since Lemma \ref{lem.ei.row} (applied to $\ell=m$) yields $e_{i}^{T}A=\left(
\text{the }i\text{-th row of }A\right)  $). This proves Lemma
\ref{lem.ei.row2}.
\end{proof}
\end{verlong}

\subsection{Proving $e_{u}^{T}SH_{c}=e_{u}^{T}H_{ac}$ for $u+1<m$}

We can now prove a generalization of Proposition \ref{prop.SHinf} to the case
of arbitrary $m$:

\begin{proposition}
\label{prop.SH}Let $u\in\mathbb{N}$ be such that $u+1<m$. Then:

\textbf{(a)} We have $e_{u}^{T}S=e_{u+1}^{T}$.

\textbf{(b)} Let $c\in\mathbb{L}$. Then, $e_{u}^{T}SH_{c}=e_{u}^{T}H_{ac}$.
\end{proposition}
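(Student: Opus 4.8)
The plan is to prove the two parts in sequence, using part \textbf{(a)} as the main engine for part \textbf{(b)}.

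For part \textbf{(a)}, I would invoke Lemma \ref{lem.ei.row2} with $A = S$ and $i = u$. This immediately gives
\[
e_{u}^{T}S = \sum_{j=0}^{m-1} S_{u,j} e_{j}^{T}.
\]
Now I recall that, by the definition (\ref{eq.S=}) of $S$, the entry $S_{u,j}$ equals the truth value $\left[ j = u+1 \right]$. So the sum collapses: every term vanishes except the one with $j = u+1$, whose coefficient is $\left[ u+1 = u+1 \right] = 1$. The one subtlety to check is that the index $j = u+1$ actually lies in the summation range $\left\{ 0, 1, \ldots, m-1 \right\}$; this is exactly guaranteed by the hypothesis $u+1 < m$ (which ensures $e_{u+1}^{T}$ is even defined). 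Splitting off the $j = u+1$ addend therefore yields $e_{u}^{T}S = e_{u+1}^{T}$.

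For part \textbf{(b)}, the strategy is to postmultiply the identity from part \textbf{(a)} by $H_{c}$ and then identify both sides. Using associativity of quasi-lower-triangular matrix multiplication (all the matrices $e_{u}^{T}$, $S$, $H_{c}$ are quasi-lower-triangular, the first two by the remarks in Subsection \ref{subsect.form.inf-mat} and $H_{c}$ because it has a single column), I get
\[
e_{u}^{T} S H_{c} = \left( e_{u}^{T} S \right) H_{c} = e_{u+1}^{T} H_{c}.
\]
On the other side, I want to compute $e_{u}^{T} H_{ac}$. Applying Lemma \ref{lem.ei.row} (the row-extraction lemma) with $\ell = 1$ to both $e_{u+1}^{T} H_{c}$ and $e_{u}^{T} H_{ac}$ reduces each to a single entry of the respective column vector. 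By the definition (\ref{eq.Hc=}) of $H_{c}$, the $(u+1)$-st entry of $H_{c}$ is $a^{u+1} c$, while the $u$-th entry of $H_{ac}$ is $a^{u}(ac) = a^{u+1} c$. These agree, so $e_{u+1}^{T} H_{c} = e_{u}^{T} H_{ac}$, and combining with the previous display gives the claim.

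I do not anticipate a genuinely hard step here: the content is entirely bookkeeping, and the two lemmas already stated do all the real work. The one place demanding a little care is making sure the index $u+1$ is in range (handled by the hypothesis $u+1 < m$) and, in the $m = \infty$ case, that the associativity rearrangement $\left( e_{u}^{T} S \right) H_{c} = e_{u}^{T}\left( S H_{c} \right)$ is legitimate — but this is licensed by the quasi-lower-triangularity of all three factors. Everything else is a matter of matching up definitions, so the proof should be short.
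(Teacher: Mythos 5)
Your proposal is correct and follows essentially the same route as the paper: part \textbf{(a)} by extracting the $u$-th row of $S$ (you via Lemma \ref{lem.ei.row2}, the paper via Lemma \ref{lem.ei.row} — a cosmetic difference, since both collapse to identifying that row with $e_{u+1}^{T}$), and part \textbf{(b)} by multiplying \textbf{(a)} by $H_{c}$ and comparing the $(u+1)$-st entry of $H_{c}$ with the $u$-th entry of $H_{ac}$, both equal to $a^{u+1}c$. Your explicit attention to the index range $u+1<m$ and to associativity in the infinite case matches the paper's implicit reliance on the same facts.
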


\begin{vershort}
\begin{proof}
[Proof of Proposition \ref{prop.SH}.]\textbf{(a)} Lemma \ref{lem.ei.row}
(applied to $\ell=m$, $A=S$ and $i=u$) yields%
\begin{equation}
e_{u}^{T}S=\left(  \text{the }u\text{-th row of }S\right)  =\left(  \left[
q=u+1\right]  \right)  _{0\leq p<1,\ 0\leq q<m} \label{pf.prop.SH.short.1}%
\end{equation}
(by (\ref{eq.S=})). But (\ref{eq.ej=}) (applied to $j=u+1$) yields%
\[
e_{u+1}=\left(  \left[  p=u+1\right]  \right)  _{0\leq p<m,\ 0\leq q<1}.
\]
Thus, by the definition of the transpose of a matrix, we obtain%
\[
e_{u+1}^{T}=\left(  \left[  q=u+1\right]  \right)  _{0\leq p<1,\ 0\leq q<m}.
\]
Comparing this with (\ref{pf.prop.SH.short.1}), we obtain $e_{u}^{T}%
S=e_{u+1}^{T}$. This proves Proposition \ref{prop.SH} \textbf{(a)}.

\textbf{(b)} Lemma \ref{lem.ei.row} (applied to $\ell=1$, $A=H_{ac}$ and
$i=u$) yields%
\begin{align*}
e_{u}^{T}H_{ac}  &  =\left(  \text{the }u\text{-th row of }H_{ac}\right) \\
&  =\left(  \text{the }u\text{-th entry of }H_{ac}\right)
\ \ \ \ \ \ \ \ \ \ \left(  \text{since }H_{ac}\text{ is a column
vector}\right) \\
&  =\underbrace{a^{u}a}_{=a^{u+1}}c\ \ \ \ \ \ \ \ \ \ \left(  \text{since
(\ref{eq.Hc=}) yields }H_{ac}=\left(  a^{i}ac\right)  _{0\leq i<m,\ 0\leq
j<1}\right) \\
&  =a^{u+1}c.
\end{align*}
Comparing this with%
\begin{align*}
&  \underbrace{e_{u}^{T}S}_{\substack{=e_{u+1}^{T}\\\text{(by Proposition
\ref{prop.SH} \textbf{(a)})}}}H_{c}\\
&  =e_{u+1}^{T}H_{c}=\left(  \text{the }\left(  u+1\right)  \text{-th row of
}H_{c}\right) \\
&  \ \ \ \ \ \ \ \ \ \ \left(  \text{by Lemma \ref{lem.ei.row}, applied to
}\ell=1\text{, }A=H_{c}\text{ and }i=u+1\right) \\
&  =\left(  \text{the }\left(  u+1\right)  \text{-th entry of }H_{c}\right)
\ \ \ \ \ \ \ \ \ \ \left(  \text{since }H_{c}\text{ is a column
vector}\right) \\
&  =a^{u+1}c\ \ \ \ \ \ \ \ \ \ \left(  \text{since (\ref{eq.Hc=}) yields
}H_{c}=\left(  a^{i}c\right)  _{0\leq i<m,\ 0\leq j<1}\right)  ,
\end{align*}
we obtain $e_{u}^{T}SH_{c}=e_{u}^{T}H_{ac}$. This proves Proposition
\ref{prop.SH} \textbf{(b)}.
\end{proof}
\end{vershort}

\begin{verlong}
\begin{proof}
[Proof of Proposition \ref{prop.SH}.]We have $u\in\mathbb{N}$, hence
$u+1\in\mathbb{N}$, thus $u+1\geq0$, hence $0\leq u+1<m$.

Also, $u\geq0$ (since $u\in\mathbb{N}$), thus $0\leq u<m$ (since $u<u+1<m$).

\textbf{(a)} Recall that $0\leq u+1<m$. Therefore, (\ref{eq.ej=}) (applied to
$j=u+1$) yields%
\[
e_{u+1}=\left(  \left[  p=u+1\right]  \right)  _{0\leq p<m,\ 0\leq q<1}.
\]
Thus, by the definition of the transpose of a matrix, we obtain%
\begin{equation}
e_{u+1}^{T}=\left(  \left[  q=u+1\right]  \right)  _{0\leq p<1,\ 0\leq
q<m}=\left(  \left[  j=u+1\right]  \right)  _{0\leq p<1,\ 0\leq j<m}
\label{pf.prop.SH.1}%
\end{equation}
(here, we have renamed the index $\left(  p,q\right)  $ as $\left(
p,j\right)  $).

Recall that $0\leq u<m$. Thus, Lemma \ref{lem.ei.row} (applied to $\ell=m$,
$A=S$ and $i=u$) yields%
\[
e_{u}^{T}S=\left(  \text{the }u\text{-th row of }S\right)  =\left(  \left[
j=u+1\right]  \right)  _{0\leq p<1,\ 0\leq j<m}\ \ \ \ \ \ \ \ \ \ \left(
\text{by (\ref{eq.S=})}\right)
\]
Comparing this with (\ref{pf.prop.SH.1}), we obtain $e_{u}^{T}S=e_{u+1}^{T}$.
This proves Proposition \ref{prop.SH} \textbf{(a)}.

\textbf{(b)} From (\ref{eq.Hc=}) (applied to $ac$ instead of $c$), we obtain%
\[
H_{ac}=\left(  a^{i}ac\right)  _{0\leq i<m,\ 0\leq j<1}.
\]
Hence,%
\begin{align*}
\left(  \text{the }u\text{-th entry of }H_{ac}\right)   &  =\underbrace{a^{u}%
a}_{=a^{u+1}}c\ \ \ \ \ \ \ \ \ \ \left(  \text{since }0\leq u<m\right) \\
&  =a^{u+1}c.
\end{align*}

But Lemma \ref{lem.ei.row} (applied to $\ell=1$, $A=H_{ac}$ and $i=u$) yields%
\begin{align*}
e_{u}^{T}H_{ac}  &  =\left(  \text{the }u\text{-th row of }H_{ac}\right) \\
&  =\left(  \text{the }u\text{-th entry of }H_{ac}\right)
\ \ \ \ \ \ \ \ \ \ \left(  \text{since }H_{ac}\text{ is a column
vector}\right) \\
&  =a^{u+1}c.
\end{align*}
Comparing this with%
\begin{align*}
&  \underbrace{e_{u}^{T}S}_{\substack{=e_{u+1}^{T}\\\text{(by Proposition
\ref{prop.SH} \textbf{(a)})}}}H_{c}\\
&  =e_{u+1}^{T}H_{c}=\left(  \text{the }\left(  u+1\right)  \text{-th row of
}H_{c}\right) \\
&  \ \ \ \ \ \ \ \ \ \ \left(  \text{by Lemma \ref{lem.ei.row}, applied to
}\ell=1\text{, }A=H_{c}\text{ and }i=u+1\right) \\
&  =\left(  \text{the }\left(  u+1\right)  \text{-th entry of }H_{c}\right)
\ \ \ \ \ \ \ \ \ \ \left(  \text{since }H_{c}\text{ is a column
vector}\right) \\
&  =a^{u+1}c\ \ \ \ \ \ \ \ \ \ \left(  \text{by (\ref{eq.Hc=})}\right)  ,
\end{align*}
we obtain $e_{u}^{T}SH_{c}=e_{u}^{T}H_{ac}$. This proves Proposition
\ref{prop.SH} \textbf{(b)}.
\end{proof}
\end{verlong}

It is now easy to derive Proposition \ref{prop.SHinf} from Proposition
\ref{prop.SH} \textbf{(b)}:

\begin{proof}
[Proof of Proposition \ref{prop.SHinf} (sketched).]We have $m=\infty$; thus,
every $u\in\mathbb{N}$ satisfies $u+1<m$. Hence, Proposition \ref{prop.SH}
\textbf{(b)} yields that $e_{u}^{T}SH_{c}=e_{u}^{T}H_{ac}$ for every
$u\in\mathbb{N}$. From this, it is easy to conclude that $SH_{c}=H_{ac}$
(using Lemma \ref{lem.ei.row}). We leave the details to the reader, since we
will not use Proposition \ref{prop.SHinf}.
\end{proof}

\subsection{Proving $U_{b}H_{c}=H_{bc}$}

Next, we shall prove Proposition \ref{prop.UHinf}. For convenience, let us
recall its statement:

\begin{proposition}
\label{prop.UH}Let $b\in\mathbb{L}$ and $c\in\mathbb{L}$. Then, $U_{b}%
H_{c}=H_{bc}$.
\end{proposition}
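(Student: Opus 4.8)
The goal is to prove $U_b H_c = H_{bc}$, where both sides are $m \times 1$ column vectors. Since this is an equality of column vectors, the cleanest approach is to verify it entry by entry: I will compute the $i$-th entry of each side for every $i$ with $0 \le i < m$, and check they agree. This avoids any subtlety about infinite matrix products, since $H_c$ has only one column, so each entry of $U_b H_c$ is computed by a single (finite, because $U_b$ is lower-triangular) sum.

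The plan is as follows. First I would fix an integer $i$ with $0 \le i < m$ and extract the $i$-th entry of $U_b H_c$ using Lemma \ref{lem.ei.row} (applied with $\ell = 1$ and $A = U_b H_c$), giving $e_i^T U_b H_c$. To evaluate this, I would first apply Lemma \ref{lem.ei.row} to $A = U_b$ to read off the $i$-th row of $U_b$, whose entries are $\dbinom{i}{j}\operatorname*{ad}\nolimits_a^{i-j}(b)$ for $0 \le j \le i$ and $0$ for $j > i$ (using the definition \eqref{eq.Ub=}). Multiplying this row by the column vector $H_c = \left(a^j c\right)_{0 \le j < m}$ (from \eqref{eq.Hc=}) and using the lower-triangularity to truncate the sum at $j = i$, I obtain that the $i$-th entry of $U_b H_c$ equals
\[
\sum_{j=0}^{i}\dbinom{i}{j}\operatorname*{ad}\nolimits_a^{i-j}(b)\cdot a^j c.
\]

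The key step is then recognizing this as a known quantity: Lemma \ref{lem.ada1} states precisely that $\sum_{j=0}^{i}\dbinom{i}{j}\operatorname*{ad}\nolimits_a^{i-j}(b)\cdot a^j = a^i b$. Therefore the displayed sum factors as $\left(\sum_{j=0}^{i}\dbinom{i}{j}\operatorname*{ad}\nolimits_a^{i-j}(b)\cdot a^j\right)c = a^i b\, c = a^i (bc)$. On the other hand, the $i$-th entry of $H_{bc}$ is, by definition \eqref{eq.Hc=} (applied with $bc$ in place of $c$), exactly $a^i (bc)$. Since the two entries agree for every $i$, the two column vectors are equal, proving $U_b H_c = H_{bc}$.

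I do not expect any serious obstacle here; the argument is essentially a direct computation once Lemma \ref{lem.ada1} is invoked. The only point requiring a little care is the bookkeeping around the sum's upper limit: because $U_b$ is lower-triangular, the entries $(U_b)_{i,j}$ vanish for $j > i$, so even when $m = \infty$ the sum defining the $i$-th entry of $U_b H_c$ is genuinely finite (it runs only over $0 \le j \le i$), which is what lets me apply the finite identity of Lemma \ref{lem.ada1} without worrying about convergence. Matching the index range of that lemma (which sums from $j=0$ to $i$) with the truncated matrix product is the one place where I would state things explicitly rather than leave them implicit.
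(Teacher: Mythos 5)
Your proposal is correct and follows essentially the same route as the paper's proof: compute the $i$-th entry of $U_bH_c$ as a sum truncated at $j=i$ by lower-triangularity, invoke Lemma \ref{lem.ada1} to identify it with $a^i bc$, and compare entrywise with $H_{bc}$. The only cosmetic difference is that you extract entries via Lemma \ref{lem.ei.row} and $e_i^T$, whereas the paper computes $\left(U_bH_c\right)_{u,0}$ directly from the matrix product formula (\ref{eq.ABik=}); the substance is identical.
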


\begin{proof}
[Proof of Proposition \ref{prop.UH}.]Let $u\in\left\{  0,1,\ldots,m-1\right\}
$. Hence, $0\leq u\leq m-1$. (Keep in mind that $\left\{  0,1,\ldots
,\infty-1\right\}  =\mathbb{N}$, so $u$ cannot be $\infty$ even when
$m=\infty$.)

From (\ref{eq.Ub=}), we see that%
\begin{equation}
\left(  U_{b}\right)  _{i,j}=%
\begin{cases}
\dbinom{i}{j}\operatorname*{ad}\nolimits_{a}^{i-j}\left(  b\right)  , &
\text{if }i\geq j;\\
0, & \text{if }i<j
\end{cases}
\label{pf.prop.UH.Ubij=}%
\end{equation}
for each $i\in\left\{  0,1,\ldots,m-1\right\}  $ and $j\in\left\{
0,1,\ldots,m-1\right\}  $.

From (\ref{eq.Hc=}), we obtain%
\begin{equation}
\left(  H_{c}\right)  _{i,0}=a^{i}c \label{pf.prop.UH.Hci=}%
\end{equation}
for each $i\in\left\{  0,1,\ldots,m-1\right\}  $. The same argument (applied
to $bc$ instead of $c$) yields%
\begin{equation}
\left(  H_{bc}\right)  _{i,0}=a^{i}bc \label{pf.prop.UH.Hbci=}%
\end{equation}
for each $i\in\left\{  0,1,\ldots,m-1\right\}  $.

Now, (\ref{eq.ABik=}) (applied to $m$, $m$, $1$, $U_{b}$, $H_{c}$, $u$ and $0$
instead of $u$, $v$, $w$, $A$, $B$, $i$ and $k$) yields%
\begin{align*}
\left(  U_{b}H_{c}\right)  _{u,0}  &  =\sum_{j=0}^{m-1}\underbrace{\left(
U_{b}\right)  _{u,j}}_{\substack{=%
\begin{cases}
\dbinom{u}{j}\operatorname*{ad}\nolimits_{a}^{u-j}\left(  b\right)  , &
\text{if }u\geq j;\\
0, & \text{if }u<j
\end{cases}
\\\text{(by (\ref{pf.prop.UH.Ubij=}), applied to }i=u\text{)}}%
}\ \ \ \underbrace{\left(  H_{c}\right)  _{j,0}}_{\substack{=a^{j}c\\\text{(by
(\ref{pf.prop.UH.Hci=}), applied to }i=j\text{)}}}\\
&  =\sum_{j=0}^{m-1}%
\begin{cases}
\dbinom{u}{j}\operatorname*{ad}\nolimits_{a}^{u-j}\left(  b\right)  , &
\text{if }u\geq j;\\
0, & \text{if }u<j
\end{cases}
\cdot a^{j}c\\
&  =\sum_{j=0}^{u}\underbrace{%
\begin{cases}
\dbinom{u}{j}\operatorname*{ad}\nolimits_{a}^{u-j}\left(  b\right)  , &
\text{if }u\geq j;\\
0, & \text{if }u<j
\end{cases}
}_{\substack{=\dbinom{u}{j}\operatorname*{ad}\nolimits_{a}^{u-j}\left(
b\right)  \\\text{(since }u\geq j\text{ (because }j\leq u\text{))}}}\cdot
a^{j}c+\sum_{j=u+1}^{m-1}\underbrace{%
\begin{cases}
\dbinom{u}{j}\operatorname*{ad}\nolimits_{a}^{u-j}\left(  b\right)  , &
\text{if }u\geq j;\\
0, & \text{if }u<j
\end{cases}
}_{\substack{=0\\\text{(since }u<j\text{ (because }j\geq u+1>u\text{))}}}\cdot
a^{j}c\\
&  \ \ \ \ \ \ \ \ \ \ \left(  \text{here, we have split the sum at
}j=u\text{, since }0\leq u\leq m-1\right) \\
&  =\sum_{j=0}^{u}\dbinom{u}{j}\operatorname*{ad}\nolimits_{a}^{u-j}\left(
b\right)  \cdot a^{j}c+\underbrace{\sum_{j=u+1}^{m-1}0\cdot a^{j}c}_{=0}%
=\sum_{j=0}^{u}\dbinom{u}{j}\operatorname*{ad}\nolimits_{a}^{u-j}\left(
b\right)  \cdot a^{j}c.
\end{align*}
Comparing this with%
\begin{align*}
\left(  H_{bc}\right)  _{u,0}  &  =\underbrace{a^{u}b}_{\substack{=\sum
_{j=0}^{u}\dbinom{u}{j}\operatorname*{ad}\nolimits_{a}^{u-j}\left(  b\right)
\cdot a^{j}\\\text{(by Lemma \ref{lem.ada1}, applied to }i=u\text{)}%
}}c\ \ \ \ \ \ \ \ \ \ \left(  \text{by (\ref{pf.prop.UH.Hbci=}), applied to
}i=u\right) \\
&  =\left(  \sum_{j=0}^{u}\dbinom{u}{j}\operatorname*{ad}\nolimits_{a}%
^{u-j}\left(  b\right)  \cdot a^{j}\right)  c=\sum_{j=0}^{u}\dbinom{u}%
{j}\operatorname*{ad}\nolimits_{a}^{u-j}\left(  b\right)  \cdot a^{j}c,
\end{align*}
we obtain $\left(  U_{b}H_{c}\right)  _{u,0}=\left(  H_{bc}\right)  _{u,0}$.

Now, recall that $U_{b}H_{c}$ is a column vector. Hence,%
\begin{equation}
\left(  \text{the }u\text{-th entry of }U_{b}H_{c}\right)  =\left(  U_{b}%
H_{c}\right)  _{u,0}=\left(  H_{bc}\right)  _{u,0}. \label{pf.prop.UH.7}%
\end{equation}
But $H_{bc}$ is also a column vector. Thus,%
\[
\left(  \text{the }u\text{-th entry of }H_{bc}\right)  =\left(  H_{bc}\right)
_{u,0}.
\]
Comparing this with (\ref{pf.prop.UH.7}), we obtain%
\[
\left(  \text{the }u\text{-th entry of }U_{b}H_{c}\right)  =\left(  \text{the
}u\text{-th entry of }H_{bc}\right)  .
\]

Now, forget that we fixed $u$. We thus have shown that $\left(  \text{the
}u\text{-th entry of }U_{b}H_{c}\right)  =\left(  \text{the }u\text{-th entry
of }H_{bc}\right)  $ for each $u\in\left\{  0,1,\ldots,m-1\right\}  $. In
other words, each entry of $U_{b}H_{c}$ equals the corresponding entry of
$H_{bc}$. Thus, the two column vectors $U_{b}H_{c}$ and $H_{bc}$ are
identical. In other words, $U_{b}H_{c}=H_{bc}$. This proves Proposition
\ref{prop.UH}.
\end{proof}

\subsection{The $\protect\underset{k}{\equiv}$ relations}

Now, we introduce a notation for saying that two $m\times\ell$-matrices are
equal in their first $m-k+1$ rows:

\begin{definition}
\label{def.equivk}Let $\ell\in\mathbb{N}\cup\left\{  \infty\right\}  $. Let
$A\in\mathbb{L}^{m\times\ell}$ and $B\in\mathbb{L}^{m\times\ell}$ be two
$m\times\ell$-matrices. Let $k$ be a positive integer. We shall say that
$A\underset{k}{\equiv}B$ if and only if we have%
\[
\left(  e_{u}^{T}A=e_{u}^{T}B\ \ \ \ \ \ \ \ \ \ \text{for all }u\in\left\{
0,1,\ldots,m-k\right\}  \right)  .
\]

\end{definition}

(Recall again that $\left\{  0,1,\ldots,\infty\right\}  $ means $\mathbb{N}$;
thus, \textquotedblleft$u\in\left\{  0,1,\ldots,m-k\right\}  $%
\textquotedblright\ means \textquotedblleft$u\in\mathbb{N}$\textquotedblright%
\ in the case when $m=\infty$. Note that $\left\{  0,1,\ldots,g\right\}  $
means the empty set $\varnothing$ when $g<0$.)

Note that the condition \textquotedblleft$e_{u}^{T}A=e_{u}^{T}B$%
\textquotedblright\ in Definition \ref{def.equivk} can be restated as
\textquotedblleft the $u$-th row of $A$ equals the $u$-th row of
$B$\textquotedblright, because of Lemma \ref{lem.ei.row}. But we will find it
easier to use it in the form \textquotedblleft$e_{u}^{T}A=e_{u}^{T}%
B$\textquotedblright.

The following lemma is easy:

\begin{lemma}
\label{lem.equivk.Ub}Let $\ell\in\mathbb{N}\cup\left\{  \infty\right\}  $. Let
$A\in\mathbb{L}^{m\times\ell}$ and $B\in\mathbb{L}^{m\times\ell}$ be two
$m\times\ell$-matrices. Let $k$ be a positive integer such that
$A\underset{k}{\equiv}B$. Let $b\in\mathbb{L}$. Then, $U_{b}%
A\underset{k}{\equiv}U_{b}B$.
\end{lemma}

All that is needed of the matrix $U_{b}$ for Lemma \ref{lem.equivk.Ub} to hold
is that $U_{b}$ is lower-triangular; we stated it for $U_{b}$ just for
convenience reasons.

\begin{proof}
[Proof of Lemma \ref{lem.equivk.Ub}.]We have $A\underset{k}{\equiv}B$. In
other words, we have%
\begin{equation}
\left(  e_{u}^{T}A=e_{u}^{T}B\ \ \ \ \ \ \ \ \ \ \text{for all }u\in\left\{
0,1,\ldots,m-k\right\}  \right)  \label{pf.lem.equivk.Ub.ass}%
\end{equation}
(by the definition of \textquotedblleft$A\underset{k}{\equiv}B$%
\textquotedblright).

\begin{verlong}
We have $k\geq1$ (since $k$ is a positive integer).
\end{verlong}

Let $u\in\left\{  0,1,\ldots,m-k\right\}  $. Thus, $0\leq u\leq m-k$. Also,
$u\in\left\{  0,1,\ldots,m-k\right\}  \subseteq\left\{  0,1,\ldots
,m-1\right\}  $ (since $m-\underbrace{k}_{\geq1}\leq m-1$). Hence, $0\leq
u\leq m-1<m$. For each $j\in\left\{  0,1,\ldots,u\right\}  $, we have
$j\in\left\{  0,1,\ldots,u\right\}  \subseteq\left\{  0,1,\ldots,m-k\right\}
$ (since $u\leq m-k$) and thus%
\begin{equation}
e_{j}^{T}A=e_{j}^{T}B \label{pf.lem.equivk.Ub.assv}%
\end{equation}
(by (\ref{pf.lem.equivk.Ub.ass}), applied to $j$ instead of $u$).

From (\ref{eq.Ub=}), we see that%
\begin{equation}
\left(  U_{b}\right)  _{i,j}=%
\begin{cases}
\dbinom{i}{j}\operatorname*{ad}\nolimits_{a}^{i-j}\left(  b\right)  , &
\text{if }i\geq j;\\
0, & \text{if }i<j
\end{cases}
\label{pf.lem.equivk.Ub.Ubij=}%
\end{equation}
for each $i\in\left\{  0,1,\ldots,m-1\right\}  $ and $j\in\left\{
0,1,\ldots,m-1\right\}  $.

For each $j\in\left\{  u+1,u+2,\ldots,m-1\right\}  $, we have $j\geq u+1$ and%
\begin{align}
\left(  U_{b}\right)  _{u,j}  &  =%
\begin{cases}
\dbinom{u}{j}\operatorname*{ad}\nolimits_{a}^{u-j}\left(  b\right)  , &
\text{if }u\geq j;\\
0, & \text{if }u<j
\end{cases}
\ \ \ \ \ \ \ \ \ \ \left(  \text{by (\ref{pf.lem.equivk.Ub.Ubij=}), applied
to }i=u\right) \nonumber\\
&  =0\ \ \ \ \ \ \ \ \ \ \left(  \text{since }u<j\text{ (because }j\geq
u+1>u\text{)}\right)  . \label{pf.lem.equivk.Ub.lav=0}%
\end{align}

Now, Lemma \ref{lem.ei.row2} (applied to $i=u$ and $A=U_{b}$) yields%
\begin{align*}
e_{u}^{T}U_{b}  &  =\sum_{j=0}^{m-1}\left(  U_{b}\right)  _{u,j}e_{j}^{T}%
=\sum_{j=0}^{u}\left(  U_{b}\right)  _{u,j}e_{j}^{T}+\sum_{j=u+1}%
^{m-1}\underbrace{\left(  U_{b}\right)  _{u,j}}_{\substack{=0\\\text{(by
(\ref{pf.lem.equivk.Ub.lav=0}))}}}e_{j}^{T}\\
&  \ \ \ \ \ \ \ \ \ \ \left(  \text{here, we have split the sum at
}j=u\text{, since }0\leq u\leq m-1\right) \\
&  =\sum_{j=0}^{u}\left(  U_{b}\right)  _{u,j}e_{j}^{T}+\underbrace{\sum
_{j=u+1}^{m-1}0e_{j}^{T}}_{=0}=\sum_{j=0}^{u}\left(  U_{b}\right)  _{u,j}%
e_{j}^{T}.
\end{align*}
Hence,%
\[
\underbrace{e_{u}^{T}U_{b}}_{=\sum_{j=0}^{u}\left(  U_{b}\right)  _{u,j}%
e_{j}^{T}}A=\left(  \sum_{j=0}^{u}\left(  U_{b}\right)  _{u,j}e_{j}%
^{T}\right)  A=\sum_{j=0}^{u}\left(  U_{b}\right)  _{u,j}\underbrace{e_{j}%
^{T}A}_{\substack{=e_{j}^{T}B\\\text{(by (\ref{pf.lem.equivk.Ub.assv}))}%
}}=\sum_{j=0}^{u}\left(  U_{b}\right)  _{u,j}e_{j}^{T}B.
\]
Comparing this with%
\[
\underbrace{e_{u}^{T}U_{b}}_{=\sum_{j=0}^{u}\left(  U_{b}\right)  _{u,j}%
e_{j}^{T}}B=\left(  \sum_{j=0}^{u}\left(  U_{b}\right)  _{u,j}e_{j}%
^{T}\right)  B=\sum_{j=0}^{u}\left(  U_{b}\right)  _{u,j}e_{j}^{T}B,
\]
we obtain $e_{u}^{T}U_{b}A=e_{u}^{T}U_{b}B$.

Forget that we fixed $u$. We thus have shown that%
\[
\left(  e_{u}^{T}U_{b}A=e_{u}^{T}U_{b}B\ \ \ \ \ \ \ \ \ \ \text{for all }%
u\in\left\{  0,1,\ldots,m-k\right\}  \right)  .
\]
In other words, $U_{b}A\underset{k}{\equiv}U_{b}B$ (by the definition of
\textquotedblleft$U_{b}A\underset{k}{\equiv}U_{b}B$\textquotedblright). This
proves Lemma \ref{lem.equivk.Ub}.
\end{proof}

The analogue of Lemma \ref{lem.equivk.Ub} for $S$ is even simpler:

\begin{lemma}
\label{lem.equivk.S}Let $\ell\in\mathbb{N}\cup\left\{  \infty\right\}  $. Let
$A\in\mathbb{L}^{m\times\ell}$ and $B\in\mathbb{L}^{m\times\ell}$ be two
$m\times\ell$-matrices. Let $k$ be a positive integer such that
$A\underset{k}{\equiv}B$. Then, $SA\underset{k+1}{\equiv}SB$.
\end{lemma}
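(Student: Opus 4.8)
The plan is to unfold both sides through Definition \ref{def.equivk} and to reduce every row-equality demanded of $SA$ and $SB$ to a row-equality of $A$ and $B$ that the hypothesis already guarantees. The single nontrivial ingredient is Proposition \ref{prop.SH} \textbf{(a)}, which says that $S$ acts on the standard basis row vectors by a shift: $e_u^T S = e_{u+1}^T$ whenever $u+1 < m$.

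First I would restate the hypothesis: $A \underset{k}{\equiv} B$ means $e_u^T A = e_u^T B$ for all $u \in \{0,1,\ldots,m-k\}$. The goal $SA \underset{k+1}{\equiv} SB$ asks for $e_u^T(SA) = e_u^T(SB)$ for all $u \in \{0,1,\ldots,m-(k+1)\}$. So I fix one such $u$; thus $0 \le u \le m-k-1$ (and if this range is empty, which can happen for finite $m$ with $k \ge m$, there is nothing to prove).

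Next I would verify the two index bounds that make everything go through. Since $k$ is a positive integer, $u+1 \le m-k \le m-1 < m$, so Proposition \ref{prop.SH} \textbf{(a)} applies and gives $e_u^T S = e_{u+1}^T$; invoking associativity of quasi-lower-triangular multiplication, this yields $e_u^T(SA) = (e_u^T S)A = e_{u+1}^T A$ and likewise $e_u^T(SB) = e_{u+1}^T B$. Separately, from $u \le m-k-1$ I obtain $u+1 \le m-k$, so the shifted index $u+1$ lies in $\{0,1,\ldots,m-k\}$ and the hypothesis applies to it, giving $e_{u+1}^T A = e_{u+1}^T B$. Chaining these three equalities produces $e_u^T(SA) = e_u^T(SB)$, and since $u$ was arbitrary in the required range, this is exactly $SA \underset{k+1}{\equiv} SB$.

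There is no real obstacle here; the only thing to watch is the interplay of the two bounds on $u+1$. We need $u+1 < m$ so that $S$ genuinely shifts the basis vector (rather than sending it off the edge of the matrix, which is precisely where the naive identity of Proposition \ref{prop.SHinf} fails for finite $m$), and we need $u+1 \le m-k$ so that the hypothesis still covers the shifted row. The increment of the subscript from $k$ to $k+1$ is exactly the price of this shift: each multiplication by $S$ costs us one row of agreement, matching the \emph{corruption spreads one entry at a time} heuristic from the proof idea.
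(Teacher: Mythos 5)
Your proof is correct and follows essentially the same route as the paper's own: fix $u$ with $0\leq u\leq m-(k+1)$, use Proposition \ref{prop.SH} \textbf{(a)} (valid since $u+1\leq m-k<m$) to rewrite $e_{u}^{T}SA=e_{u+1}^{T}A$ and $e_{u}^{T}SB=e_{u+1}^{T}B$, then apply the hypothesis at the shifted index $u+1\in\left\{ 0,1,\ldots,m-k\right\}$. The only (harmless) additions beyond the paper's argument are your explicit remarks about the empty-range case and about associativity of the products involved.
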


\begin{proof}
[Proof of Lemma \ref{lem.equivk.S}.]We have $A\underset{k}{\equiv}B$. In other
words, we have%
\begin{equation}
\left(  e_{u}^{T}A=e_{u}^{T}B\ \ \ \ \ \ \ \ \ \ \text{for all }u\in\left\{
0,1,\ldots,m-k\right\}  \right)  \label{pf.lem.equivk.S.ass}%
\end{equation}
(by the definition of \textquotedblleft$A\underset{k}{\equiv}B$%
\textquotedblright).

Let $u\in\left\{  0,1,\ldots,m-\left(  k+1\right)  \right\}  $. Then, $u\leq
m-\left(  k+1\right)  =m-k-1$, so that $u+1\leq m-k$. Combining this with
$u+1\in\mathbb{N}$ (since $u\in\left\{  0,1,\ldots,m-\left(  k+1\right)
\right\}  \subseteq\mathbb{N}$), we obtain $u+1\in\left\{  0,1,\ldots
,m-k\right\}  $. Hence, (\ref{pf.lem.equivk.S.ass}) (applied to $u+1$ instead
of $u$) yields $e_{u+1}^{T}A=e_{u+1}^{T}B$.

But $u+1\leq m-\underbrace{k}_{>0}<m$. Hence, Proposition \ref{prop.SH}
\textbf{(a)} yields $e_{u}^{T}S=e_{u+1}^{T}$. Hence, $\underbrace{e_{u}^{T}%
S}_{=e_{u+1}^{T}}A=e_{u+1}^{T}A=e_{u+1}^{T}B$. Comparing this with
$\underbrace{e_{u}^{T}S}_{=e_{u+1}^{T}}B=e_{u+1}^{T}B$, we obtain $e_{u}%
^{T}SA=e_{u}^{T}SB$.

Forget that we fixed $u$. We thus have shown that%
\[
\left(  e_{u}^{T}SA=e_{u}^{T}SB\ \ \ \ \ \ \ \ \ \ \text{for all }u\in\left\{
0,1,\ldots,m-\left(  k+1\right)  \right\}  \right)  .
\]
In other words, $SA\underset{k+1}{\equiv}SB$ (by the definition of
\textquotedblleft$SA\underset{k+1}{\equiv}SB$\textquotedblright). This proves
Lemma \ref{lem.equivk.S}.
\end{proof}

Now, we can prove the following lemma, which is as close as we can get to
Proposition \ref{prop.SHinf} without requiring $m=\infty$:

\begin{lemma}
\label{lem.equivk.Sc}Let $A\in\mathbb{L}^{m\times1}$ and $c\in\mathbb{L}$. Let
$k$ be a positive integer such that $A\underset{k}{\equiv}H_{c}$. Then,
$SA\underset{k+1}{\equiv}H_{ac}$.
\end{lemma}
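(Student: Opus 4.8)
The plan is to combine Lemma \ref{lem.equivk.S}, which governs how multiplication by $S$ interacts with the relation $\underset{k}{\equiv}$, with Proposition \ref{prop.SH} \textbf{(b)}, which compares $SH_c$ against $H_{ac}$ one row at a time. I will use the fact that, for each fixed positive integer $j$, the relation $\underset{j}{\equiv}$ is transitive (this is immediate from Definition \ref{def.equivk}, since it is defined by the equality of the rows indexed $0,1,\ldots,m-j$), so that I may chain two such congruences together.

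First I would apply Lemma \ref{lem.equivk.S} to the hypothesis $A\underset{k}{\equiv}H_{c}$, taking $\ell=1$ (both $A$ and $H_{c}$ lie in $\mathbb{L}^{m\times1}$). This yields at once $SA\underset{k+1}{\equiv}SH_{c}$. It therefore suffices to prove the single congruence $SH_{c}\underset{k+1}{\equiv}H_{ac}$; the desired $SA\underset{k+1}{\equiv}H_{ac}$ then follows by transitivity of $\underset{k+1}{\equiv}$.

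To establish $SH_{c}\underset{k+1}{\equiv}H_{ac}$, I would unfold Definition \ref{def.equivk}: I must show $e_{u}^{T}SH_{c}=e_{u}^{T}H_{ac}$ for every $u\in\left\{0,1,\ldots,m-\left(k+1\right)\right\}$. Fix such a $u$. The crucial bookkeeping observation is that $u\leq m-\left(k+1\right)=m-k-1$ together with $k\geq1$ forces $u+1\leq m-k\leq m-1<m$, that is, $u+1<m$ (and in the case $m=\infty$ this holds automatically, since then the index set is all of $\mathbb{N}$). This is exactly the hypothesis under which Proposition \ref{prop.SH} \textbf{(b)} is available, and applying it gives $e_{u}^{T}SH_{c}=e_{u}^{T}H_{ac}$ directly.

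I do not expect a genuine obstacle here; the only point requiring care is the index arithmetic above, namely checking that the shrunken range $\left\{0,1,\ldots,m-\left(k+1\right)\right\}$ is precisely small enough to guarantee $u+1<m$, so that Proposition \ref{prop.SH} \textbf{(b)} applies. This is the quantitative form of the ``one entry at a time'' heuristic from the idea of the proof: multiplying by $S$ shifts the index $u$ up to $u+1$ and costs one unit in the subscript of $\equiv$, which is exactly accounted for by the passage from $k$ to $k+1$.
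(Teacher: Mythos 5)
Your proposal is correct and follows essentially the same route as the paper's proof: apply Lemma \ref{lem.equivk.S} (with $\ell=1$, $B=H_{c}$) to get $SA\underset{k+1}{\equiv}SH_{c}$, then use the index bound $u+1\leq m-k<m$ to invoke Proposition \ref{prop.SH} \textbf{(b)} row by row. The only cosmetic difference is that you package the second step as a separate congruence $SH_{c}\underset{k+1}{\equiv}H_{ac}$ and appeal to transitivity of $\underset{k+1}{\equiv}$, whereas the paper chains the equalities $e_{u}^{T}SA=e_{u}^{T}SH_{c}=e_{u}^{T}H_{ac}$ directly inside one pass over $u$; both are valid and equivalent.
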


\begin{proof}
[Proof of Lemma \ref{lem.equivk.Sc}.]Lemma \ref{lem.equivk.S} (applied to
$\ell=1$ and $B=H_{c}$) yields $SA\underset{k+1}{\equiv}SH_{c}$. In other
words, we have
\begin{equation}
\left(  e_{u}^{T}SA=e_{u}^{T}SH_{c}\ \ \ \ \ \ \ \ \ \ \text{for all }%
u\in\left\{  0,1,\ldots,m-\left(  k+1\right)  \right\}  \right)
\label{pf.lem.equivk.Sc.ass}%
\end{equation}
(by the definition of \textquotedblleft$SA\underset{k+1}{\equiv}SH_{c}%
$\textquotedblright).

Now, let $u\in\left\{  0,1,\ldots,m-\left(  k+1\right)  \right\}  $. Thus,
$u\leq m-\left(  k+1\right)  =m-k-1$, so that $u+1\leq m-\underbrace{k}%
_{>0}<m$. Thus, Proposition \ref{prop.SH} \textbf{(b)} yields $e_{u}^{T}%
SH_{c}=e_{u}^{T}H_{ac}$. But (\ref{pf.lem.equivk.Sc.ass}) yields%
\[
e_{u}^{T}SA=e_{u}^{T}SH_{c}=e_{u}^{T}H_{ac}.
\]

Now, forget that we fixed $u$. We thus have shown that
\[
\left(  e_{u}^{T}SA=e_{u}^{T}H_{ac}\ \ \ \ \ \ \ \ \ \ \text{for all }%
u\in\left\{  0,1,\ldots,m-\left(  k+1\right)  \right\}  \right)  .
\]
In other words, $SA\underset{k+1}{\equiv}H_{ac}$ (by the definition of
\textquotedblleft$SA\underset{k+1}{\equiv}H_{ac}$\textquotedblright). This
proves Lemma \ref{lem.equivk.Sc}.
\end{proof}

\subsection{Proof of Theorem \ref{thm.gen}}

Our last stop before Theorem \ref{thm.gen} is the following lemma, which by
now is an easy induction:

\begin{lemma}
\label{lem.gen-lem}Let $b\in\mathbb{L}$. Let $n\in\mathbb{N}$. Then,%
\[
\left(  U_{b}S\right)  ^{n}H_{1}\underset{n+1}{\equiv}H_{\left(  ba\right)
^{n}}.
\]

\end{lemma}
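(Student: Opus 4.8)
The plan is to induct on $n$, feeding the three results already in hand into each other: Proposition \ref{prop.UH} (the identity $U_b H_c = H_{bc}$), Lemma \ref{lem.equivk.Sc} (which turns $A \underset{k}{\equiv} H_c$ into $SA \underset{k+1}{\equiv} H_{ac}$), and Lemma \ref{lem.equivk.Ub} (which preserves the relation $\underset{k}{\equiv}$ under left multiplication by $U_b$). The relation $\underset{k}{\equiv}$ depends only on the actual values of the matrices involved, so I may freely replace a matrix by an equal one on either side of a $\underset{k}{\equiv}$ sign.

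For the base case $n = 0$, I note that $(U_b S)^0 H_1 = H_1 = H_{(ba)^0}$ (since $(ba)^0 = 1$), so the two sides are literally the same matrix; hence $(U_b S)^0 H_1 \underset{1}{\equiv} H_{(ba)^0}$ holds trivially (indeed $\underset{1}{\equiv}$ is just equality).

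For the induction step, I assume $(U_b S)^n H_1 \underset{n+1}{\equiv} H_{(ba)^n}$ and prove the statement for $n+1$. Abbreviating $A = (U_b S)^n H_1$, the induction hypothesis reads $A \underset{n+1}{\equiv} H_{(ba)^n}$, where $A \in \mathbb{L}^{m \times 1}$. First I apply Lemma \ref{lem.equivk.Sc} (with $c = (ba)^n$ and $k = n+1$) to obtain $SA \underset{n+2}{\equiv} H_{a(ba)^n}$. Next I apply Lemma \ref{lem.equivk.Ub} (with $\ell = 1$ and $k = n+2$) to get $U_b S A \underset{n+2}{\equiv} U_b H_{a(ba)^n}$. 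Finally, Proposition \ref{prop.UH} (with $c = a(ba)^n$) rewrites the right-hand side as $U_b H_{a(ba)^n} = H_{b \cdot a(ba)^n} = H_{(ba)^{n+1}}$. Since $U_b S A = (U_b S)^{n+1} H_1$, this yields exactly $(U_b S)^{n+1} H_1 \underset{n+2}{\equiv} H_{(ba)^{n+1}}$, completing the induction.

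I expect no serious obstacle: every step is a direct invocation of a previously proven result. The one thing to watch is the bookkeeping of the index $k$ in $\underset{k}{\equiv}$ — it must rise by exactly one in the $S$-step (via Lemma \ref{lem.equivk.Sc}) and remain unchanged in the $U_b$-step (via Lemma \ref{lem.equivk.Ub}), so that the net effect of one factor $U_b S$ is an increment from $n+1$ to $n+2$. This is precisely the mechanism anticipated in the idea subsection: the ``corruption'' in the last entries advances one row under each $S$ and not at all under $U_b$, so after $n$ applications of $U_b S$ it has not yet reached row $0$, which is all the theorem will ultimately need.
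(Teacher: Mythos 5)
Your proof is correct and follows essentially the same route as the paper's own proof: an induction whose base case uses $(U_bS)^0H_1 = H_1 = H_{(ba)^0}$, and whose step chains Lemma \ref{lem.equivk.Sc} (raising the index from $n+1$ to $n+2$), then Lemma \ref{lem.equivk.Ub} (preserving it), then Proposition \ref{prop.UH} to identify $U_bH_{a(ba)^n}$ with $H_{(ba)^{n+1}}$. The only difference is cosmetic (you step from $n$ to $n+1$ where the paper steps from $k-1$ to $k$), and all index conditions — positivity of $k$ in both lemmas — are satisfied in your argument.
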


\begin{proof}
[Proof of Lemma \ref{lem.gen-lem}.]We shall prove Lemma \ref{lem.gen-lem} by
induction on $n$:

\textit{Induction base:} It is easy to see that $H_{1}\underset{0+1}{\equiv
}H_{1}$\ \ \ \ \footnote{\textit{Proof.} Clearly,%
\[
\left(  e_{u}^{T}H_{1}=e_{u}^{T}H_{1}\ \ \ \ \ \ \ \ \ \ \text{for all }%
u\in\left\{  0,1,\ldots,m-\left(  0+1\right)  \right\}  \right)  .
\]
In other words, $H_{1}\underset{0+1}{\equiv}H_{1}$ (by the definition of
\textquotedblleft$H_{1}\underset{0+1}{\equiv}H_{1}$\textquotedblright).}.

But $\underbrace{\left(  U_{b}S\right)  ^{0}}_{=I_{m}}H_{1}=I_{m}H_{1}=H_{1}$
and $H_{\left(  ba\right)  ^{0}}=H_{1}$ (since $\left(  ba\right)  ^{0}=1$).
In view of these two equalities, we can rewrite $H_{1}\underset{0+1}{\equiv
}H_{1}$ as $\left(  U_{b}S\right)  ^{0}H_{1}\underset{0+1}{\equiv}H_{\left(
ba\right)  ^{0}}$. In other words, Lemma \ref{lem.gen-lem} holds for $n=0$.
This completes the induction base.

\textit{Induction step:} Let $k$ be a positive integer. Assume that Lemma
\ref{lem.gen-lem} holds for $n=k-1$. We must prove that Lemma
\ref{lem.gen-lem} holds for $n=k$.

\begin{vershort}
We have assumed that Lemma \ref{lem.gen-lem} holds for $n=k-1$. In other
words, we have%
\[
\left(  U_{b}S\right)  ^{k-1}H_{1}\underset{k}{\equiv}H_{\left(  ba\right)
^{k-1}}.
\]

\end{vershort}

\begin{verlong}
We have assumed that Lemma \ref{lem.gen-lem} holds for $n=k-1$. In other
words, we have%
\[
\left(  U_{b}S\right)  ^{k-1}H_{1}\underset{\left(  k-1\right)  +1}{\equiv
}H_{\left(  ba\right)  ^{k-1}}.
\]
In other words, we have%
\[
\left(  U_{b}S\right)  ^{k-1}H_{1}\underset{k}{\equiv}H_{\left(  ba\right)
^{k-1}}%
\]
(since $\left(  k-1\right)  +1=k$).
\end{verlong}

Hence, Lemma \ref{lem.equivk.Sc} (applied to $A=\left(  U_{b}S\right)
^{k-1}H_{1}$ and $c=\left(  ba\right)  ^{k-1}$) yields
\[
S\left(  U_{b}S\right)  ^{k-1}H_{1}\underset{k+1}{\equiv}H_{a\left(
ba\right)  ^{k-1}}.
\]
Thus, Lemma \ref{lem.equivk.Ub} (applied to $1$, $k+1$, $S\left(
U_{b}S\right)  ^{k-1}H_{1}$ and $H_{a\left(  ba\right)  ^{k-1}}$ instead of
$\ell$, $k$, $A$ and $B$) yields%
\[
U_{b}S\left(  U_{b}S\right)  ^{k-1}H_{1}\underset{k+1}{\equiv}U_{b}H_{a\left(
ba\right)  ^{k-1}}.
\]
In view of%
\[
U_{b}S\left(  U_{b}S\right)  ^{k-1}=\left(  U_{b}S\right)  \left(
U_{b}S\right)  ^{k-1}=\left(  U_{b}S\right)  ^{k}%
\]
and%
\begin{align*}
U_{b}H_{a\left(  ba\right)  ^{k-1}}  &  =H_{ba\left(  ba\right)  ^{k-1}%
}\ \ \ \ \ \ \ \ \ \ \left(  \text{by Proposition \ref{prop.UH}, applied to
}c=a\left(  ba\right)  ^{k-1}\right) \\
&  =H_{\left(  ba\right)  ^{k}}\ \ \ \ \ \ \ \ \ \ \left(  \text{since
}ba\left(  ba\right)  ^{k-1}=\left(  ba\right)  \left(  ba\right)
^{k-1}=\left(  ba\right)  ^{k}\right)  ,
\end{align*}
this rewrites as
\[
\left(  U_{b}S\right)  ^{k}H_{1}\underset{k+1}{\equiv}H_{\left(  ba\right)
^{k}}.
\]
In other words, Lemma \ref{lem.gen-lem} holds for $n=k$. This completes the
induction step. Thus, Lemma \ref{lem.gen-lem} is proven by induction.
\end{proof}

We can now easily prove Theorem \ref{thm.gen}:

\begin{proof}
[Proof of Theorem \ref{thm.gen}.]We have $n<m$, thus $n+1\leq m$ (since $n$
and $m$ are integers), hence $m-\left(  n+1\right)  \geq0$. Thus,
$0\in\left\{  0,1,\ldots,m-\left(  n+1\right)  \right\}  $. Also, $0\leq0<m$
(since $0\leq n<m$).

Lemma \ref{lem.gen-lem} shows that
\[
\left(  U_{b}S\right)  ^{n}H_{1}\underset{n+1}{\equiv}H_{\left(  ba\right)
^{n}}.
\]
In other words, we have
\[
\left(  e_{u}^{T}\left(  U_{b}S\right)  ^{n}H_{1}=e_{u}^{T}H_{\left(
ba\right)  ^{n}}\ \ \ \ \ \ \ \ \ \ \text{for all }u\in\left\{  0,1,\ldots
,m-\left(  n+1\right)  \right\}  \right)
\]
(by the definition of \textquotedblleft$\left(  U_{b}S\right)  ^{n}%
H_{1}\underset{n+1}{\equiv}H_{\left(  ba\right)  ^{n}}$\textquotedblright). We
can apply this to $u=0$ (since $0\in\left\{  0,1,\ldots,m-\left(  n+1\right)
\right\}  $), and thus obtain%
\begin{align*}
e_{0}^{T}\left(  U_{b}S\right)  ^{n}H_{1}  &  =e_{0}^{T}H_{\left(  ba\right)
^{n}}=\left(  \text{the }0\text{-th row of }H_{\left(  ba\right)  ^{n}}\right)
\\
&  \ \ \ \ \ \ \ \ \ \ \left(  \text{by Lemma \ref{lem.ei.row}, applied to
}\ell=1\text{, }i=0\text{ and }A=H_{\left(  ba\right)  ^{n}}\right) \\
&  =\left(  \text{the }0\text{-th entry of }H_{\left(  ba\right)  ^{n}%
}\right)  \ \ \ \ \ \ \ \ \ \ \left(  \text{since }H_{\left(  ba\right)  ^{n}%
}\text{ is a column vector}\right) \\
&  =\underbrace{a^{0}}_{=1}\left(  ba\right)  ^{n}\ \ \ \ \ \ \ \ \ \ \left(
\begin{array}
[c]{c}%
\text{since (\ref{eq.Hc=}) (applied to }c=\left(  ba\right)  ^{n}\text{)}\\
\text{yields }H_{\left(  ba\right)  ^{n}}=\left(  a^{i}\left(  ba\right)
^{n}\right)  _{0\leq i<m,\ 0\leq j<1}%
\end{array}
\right) \\
&  =\left(  ba\right)  ^{n}.
\end{align*}
This proves Theorem \ref{thm.gen}.
\end{proof}

\section{\label{sect.weyl}A Weyl-algebraic application}

\subsection{The claim}

We shall now restrict ourselves to a more special situation.

Namely, we let $\mathbb{K}$ be a commutative ring, and we assume that the ring
$\mathbb{L}$ is a $\mathbb{K}$-algebra.

Consider the polynomial ring $\mathbb{K}\left[  t\right]  $ in one variable
$t$ over $\mathbb{K}$. For each polynomial $g\in\mathbb{K}\left[  t\right]  $
and each $n\in\mathbb{N}$, we let $g^{\left(  n\right)  }$ be the $n$-th
derivative of $g$; that is,
\begin{equation}
g^{\left(  n\right)  }=\dfrac{d^{n}}{dt^{n}}g. \label{eq.gn=}%
\end{equation}
Thus, in particular, $g^{\left(  0\right)  }=g$ and $g^{\left(  1\right)
}=g^{\prime}$ (where $g^{\prime}$ denotes the derivative $\dfrac{d}{dt}g$ of
$g$).

Recall that we fixed $a\in\mathbb{L}$. Furthermore, let $h\in\mathbb{L}$ and
$x\in\mathbb{L}$ be such that
\[
\left[  a,x\right]  =h\ \ \ \ \ \ \ \ \ \ \text{and}%
\ \ \ \ \ \ \ \ \ \ \left[  h,a\right]  =0\ \ \ \ \ \ \ \ \ \ \text{and}%
\ \ \ \ \ \ \ \ \ \ \left[  h,x\right]  =0.
\]
This situation is actually fairly common:

\begin{example}
\label{exa.weyl.weyl}Let $D$ be the differentiation operator
\[
\mathbb{K}\left[  t\right]  \rightarrow\mathbb{K}\left[  t\right]
,\ \ \ \ \ \ \ \ \ \ g\mapsto\dfrac{d}{dt}g.
\]
Let $T$ be the \textquotedblleft multiplication by $t$\textquotedblright%
\ operator%
\[
\mathbb{K}\left[  t\right]  \rightarrow\mathbb{K}\left[  t\right]
,\ \ \ \ \ \ \ \ \ \ g\mapsto tg.
\]
Then, the three operators $D$, $T$ and $\operatorname*{id}%
\nolimits_{\mathbb{K}\left[  t\right]  }$ belong to the $\mathbb{K}$-algebra
$\operatorname*{End}\nolimits_{\mathbb{K}}\left(  \mathbb{K}\left[  t\right]
\right)  $ of all endomorphisms of the $\mathbb{K}$-module $\mathbb{K}\left[
t\right]  $. These three operators satisfy%
\[
\left[  D,T\right]  =\operatorname*{id}\nolimits_{\mathbb{K}\left[  t\right]
},\ \ \ \ \ \ \ \ \ \ \left[  \operatorname*{id}\nolimits_{\mathbb{K}\left[
t\right]  },D\right]  =0\ \ \ \ \ \ \ \ \ \ \text{and}%
\ \ \ \ \ \ \ \ \ \ \left[  \operatorname*{id}\nolimits_{\mathbb{K}\left[
t\right]  },T\right]  =0.
\]
Hence, we can obtain an example of the situation we are considering by setting
$\mathbb{L}=\operatorname*{End}\nolimits_{\mathbb{K}}\left(  \mathbb{K}\left[
t\right]  \right)  $, $a=D$, $x=T$ and $h=\operatorname*{id}%
\nolimits_{\mathbb{K}\left[  t\right]  }$.

Further examples can be obtained by varying this one. For example, if
$\mathbb{K}$ is a field, then $\mathbb{K}\left[  t\right]  $ can be replaced
by the field of rational functions $\mathbb{K}\left(  t\right)  $.
Alternatively, if $\mathbb{K}=\mathbb{R}$, then $\mathbb{K}\left[  t\right]  $
can be replaced by the algebra of $C^{\infty}$-functions $\mathbb{R}%
\rightarrow\mathbb{R}$.

Other examples appear in the theory of Weyl algebras and of $2$-step nilpotent
Lie algebras.
\end{example}

Now, we return to the generality of $\mathbb{K}$, $\mathbb{L}$, $a$, $h$, $x$
and $m$ satisfying $\left[  a,x\right]  =h$ and $\left[  h,a\right]  =0$ and
$\left[  h,x\right]  =0.$

For any polynomial $g\in\mathbb{K}\left[  t\right]  $, we define an $m\times
m$-matrix $V_{g}\in\mathbb{L}^{m\times m}$ by%
\begin{equation}
V_{g}=\left(
\begin{cases}
\dbinom{i}{j}g^{\left(  i-j\right)  }\left(  x\right)  \cdot h^{i-j}, &
\text{if }i\geq j;\\
0, & \text{if }i<j
\end{cases}
\right)  _{0\leq i<m,\ 0\leq j<m}. \label{eq.Vg=}%
\end{equation}
This matrix $V_{g}$ looks as follows:

\begin{itemize}
\item If $g\in\mathbb{K}\left[  t\right]  $ and $m\in\mathbb{N}$, then%
\[
V_{g}=\left(
\begin{smallmatrix}
g^{\left(  0\right)  }\left(  x\right)  & 0 & 0 & \cdots & 0\\
g^{\left(  1\right)  }\left(  x\right)  \cdot h & g^{\left(  0\right)
}\left(  x\right)  & 0 & \cdots & 0\\
g^{\left(  2\right)  }\left(  x\right)  \cdot h^{2} & 2g^{\left(  1\right)
}\left(  x\right)  \cdot h & g^{\left(  0\right)  }\left(  x\right)  & \cdots
& 0\\
\vdots & \vdots & \vdots & \ddots & \vdots\\
g^{\left(  m-1\right)  }\left(  x\right)  \cdot h^{m-1} & \left(  m-1\right)
g^{\left(  m-2\right)  }\left(  x\right)  \cdot h^{m-2} & \tbinom{m-1}%
{2}g^{\left(  m-3\right)  }\left(  x\right)  \cdot h^{m-3} & \cdots &
g^{\left(  0\right)  }\left(  x\right)
\end{smallmatrix}
\right)  .
\]

\item If $g\in\mathbb{K}\left[  t\right]  $ and $m=\infty$, then%
\[
V_{g}=\left(
\begin{array}
[c]{ccccc}%
g^{\left(  0\right)  }\left(  x\right)  & 0 & 0 & 0 & \cdots\\
g^{\left(  1\right)  }\left(  x\right)  \cdot h & g^{\left(  0\right)
}\left(  x\right)  & 0 & 0 & \cdots\\
g^{\left(  2\right)  }\left(  x\right)  \cdot h^{2} & 2g^{\left(  1\right)
}\left(  x\right)  \cdot h & g^{\left(  0\right)  }\left(  x\right)  & 0 &
\cdots\\
g^{\left(  3\right)  }\left(  x\right)  \cdot h^{3} & 3g^{\left(  2\right)
}\left(  x\right)  \cdot h^{2} & 3g^{\left(  1\right)  }\left(  x\right)
\cdot h & g^{\left(  0\right)  }\left(  x\right)  & \cdots\\
\vdots & \vdots & \vdots & \vdots & \ddots
\end{array}
\right)  .
\]

\end{itemize}

Now, Tom Copeland has found the following identity \cite{MO337766}:

\begin{theorem}
\label{thm.copeland}Let $n\in\mathbb{N}$ be such that $n<m$. Let
$g\in\mathbb{K}\left[  t\right]  $. Then,%
\[
\left(  g\left(  x\right)  \cdot a\right)  ^{n}=e_{0}^{T}\left(
V_{g}S\right)  ^{n}H_{1}.
\]

\end{theorem}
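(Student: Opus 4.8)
The plan is to deduce Theorem \ref{thm.copeland} from the general formula in Theorem \ref{thm.gen}. Specifically, I would apply Theorem \ref{thm.gen} to the element $b = g(x)$; this immediately yields $\left( g(x) \cdot a \right)^{n} = e_{0}^{T}\left( U_{g(x)} S \right)^{n} H_{1}$. Comparing this with the desired right-hand side $e_{0}^{T}\left( V_{g} S \right)^{n} H_{1}$, it suffices to prove the matrix identity $U_{g(x)} = V_{g}$. Reading off the defining formulas \eqref{eq.Ub=} and \eqref{eq.Vg=} entrywise, both matrices vanish above the diagonal, and on or below the diagonal their $\left( i,j \right)$-entries are $\dbinom{i}{j}\operatorname*{ad}\nolimits_{a}^{i-j}\left( g(x) \right)$ and $\dbinom{i}{j} g^{\left( i-j \right)}(x) \cdot h^{i-j}$; hence the whole claim collapses to the single operator identity
\[
\operatorname*{ad}\nolimits_{a}^{k}\left( g(x) \right) = g^{\left( k \right)}(x) \cdot h^{k} \qquad \text{for all } k \in \mathbb{N}.
\]

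The core of the argument is this identity, which I would prove by induction on $k$ after first establishing the auxiliary fact
\[
\operatorname*{ad}\nolimits_{a}\left( f(x) \right) = f'(x) \cdot h \qquad \text{for every } f \in \mathbb{K}[t].
\]
To prove the auxiliary fact, I would use that $\operatorname*{ad}\nolimits_{a}$ is a $\mathbb{K}$-linear derivation of $\mathbb{L}$: it is $\mathbb{K}$-linear because $\mathbb{K}$ acts centrally, and it satisfies the Leibniz rule $\operatorname*{ad}\nolimits_{a}(pq) = \operatorname*{ad}\nolimits_{a}(p)\, q + p\, \operatorname*{ad}\nolimits_{a}(q)$, a standard property of commutators. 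By $\mathbb{K}$-linearity it is enough to check the identity on monomials $f = t^{n}$, and there a short induction on $n$ (using the Leibniz rule, the seed $\operatorname*{ad}\nolimits_{a}(x) = \left[ a,x \right] = h$, and the hypothesis $\left[ h,x \right] = 0$ to move the single factor of $h$ past powers of $x$) gives $\operatorname*{ad}\nolimits_{a}(x^{n}) = n x^{n-1} h = \left( t^{n} \right)'(x) \cdot h$.

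With the auxiliary fact in place, the induction on $k$ is routine. The base case $k = 0$ is just $g(x) = g^{\left( 0 \right)}(x) \cdot h^{0}$. For the step, I apply $\operatorname*{ad}\nolimits_{a}$ to $g^{\left( k \right)}(x) \cdot h^{k}$ via the Leibniz rule. The term $g^{\left( k \right)}(x) \cdot \operatorname*{ad}\nolimits_{a}(h^{k})$ vanishes because $\left[ h,a \right] = 0$ makes $h^{k}$ commute with $a$, while the auxiliary fact (applied to $f = g^{\left( k \right)}$) turns the remaining term into $g^{\left( k+1 \right)}(x) \cdot h \cdot h^{k} = g^{\left( k+1 \right)}(x) \cdot h^{k+1}$, completing the induction.

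The computation has no serious obstacle; the only thing requiring care is the precise role of each hypothesis. The relation $\left[ a,x \right] = h$ seeds the single nonzero commutator, $\left[ h,x \right] = 0$ is what lets the factors of $h$ be collected in the monomial computation, and $\left[ h,a \right] = 0$ is what annihilates the spurious $\operatorname*{ad}\nolimits_{a}(h^{k})$ term in the inductive step. Once $U_{g(x)} = V_{g}$ is established, Theorem \ref{thm.copeland} follows at once from Theorem \ref{thm.gen}.
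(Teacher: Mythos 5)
Your proposal is correct and follows essentially the same route as the paper: reduce to Theorem \ref{thm.gen} with $b = g(x)$, prove the matrix identity $U_{g(x)} = V_g$ entrywise, and reduce that to the key identity $\operatorname*{ad}\nolimits_{a}^{k}\left(g(x)\right) = g^{(k)}(x)\cdot h^{k}$, proven by induction on $k$ from the first-order fact $\operatorname*{ad}\nolimits_{a}\left(f(x)\right) = f'(x)\cdot h$ (the paper's Corollary \ref{cor.deriv-ad} and Proposition \ref{prop.deriv1}, with the hypotheses $[a,x]=h$, $[h,x]=0$, $[h,a]=0$ playing exactly the roles you identify). The only difference is cosmetic: you package the commutator manipulations via the Leibniz rule for $\operatorname*{ad}\nolimits_{a}$, where the paper carries out the equivalent computations explicitly (Propositions \ref{prop.deriv1} and \ref{prop.deriv.triv-comm}).
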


This identity will easily follow from Theorem \ref{thm.gen} (applied to
$b=g\left(  x\right)  $), once we can show the following:

\begin{proposition}
\label{prop.copeland-main}Let $g\in\mathbb{K}\left[  t\right]  $. Then,
$U_{g\left(  x\right)  }=V_{g}$.
\end{proposition}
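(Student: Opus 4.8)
The plan is to reduce the matrix identity $U_{g(x)}=V_g$ to a single statement about the operator $\operatorname{ad}_a$. Comparing the definitions \eqref{eq.Ub=} (with $b=g(x)$) and \eqref{eq.Vg=} entrywise, the two matrices agree below the diagonal automatically (both entries vanish when $i<j$), so it suffices to prove that
\[
\operatorname{ad}_a^{k}\left(g(x)\right)=g^{(k)}(x)\cdot h^{k}\qquad\text{for all }k\in\mathbb{N}.
\]
Indeed, for $i\geq j$ the $(i,j)$-entry of $U_{g(x)}$ is $\dbinom{i}{j}\operatorname{ad}_a^{i-j}\left(g(x)\right)$, whereas that of $V_g$ is $\dbinom{i}{j}g^{(i-j)}(x)\cdot h^{i-j}$, and these coincide by the displayed identity with $k=i-j$. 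Thus the whole proposition rests on this one commutator formula.

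Two routine facts underlie it. First, $\operatorname{ad}_a$ is a $\mathbb{K}$-linear derivation of $\mathbb{L}$: the Leibniz rule $\operatorname{ad}_a(uv)=\operatorname{ad}_a(u)\,v+u\,\operatorname{ad}_a(v)$ follows by inserting $-uav+uav$ into $[a,uv]$, and $\mathbb{K}$-linearity holds because scalars are central in the $\mathbb{K}$-algebra $\mathbb{L}$. Second, the substitution map $\mathbb{K}[t]\to\mathbb{L},\ g\mapsto g(x)$ is a $\mathbb{K}$-algebra homomorphism, and the formal derivative $g\mapsto g'$ is $\mathbb{K}$-linear with $\dfrac{d}{dt}t^{n}=nt^{n-1}$. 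Using these, I would first establish the case $k=1$, namely $\operatorname{ad}_a\left(g(x)\right)=g'(x)\cdot h$. Since both sides are $\mathbb{K}$-linear in $g$, it is enough to treat monomials $g=t^{n}$, where a short induction on $n$ (via the Leibniz rule, the relation $\operatorname{ad}_a(x)=[a,x]=h$, and the fact that $h$ commutes with $x$) gives $\operatorname{ad}_a(x^{n})=nx^{n-1}h$, i.e.\ exactly $(t^{n})'(x)\cdot h$.

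Finally I would obtain the general identity by induction on $k$. The base case $k=0$ reads $g(x)=g^{(0)}(x)\cdot h^{0}$. For the inductive step, apply the case $k=1$ to the polynomial $g^{(k)}$ and combine it with $\operatorname{ad}_a(h)=[a,h]=-[h,a]=0$ (so that $h^{k}$ is annihilated by $\operatorname{ad}_a$) and the centrality of $h$:
\[
\operatorname{ad}_a^{k+1}\left(g(x)\right)=\operatorname{ad}_a\left(g^{(k)}(x)\cdot h^{k}\right)=\operatorname{ad}_a\left(g^{(k)}(x)\right)\cdot h^{k}=g^{(k+1)}(x)\cdot h\cdot h^{k},
\]
which is $g^{(k+1)}(x)\cdot h^{k+1}$, as desired. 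With the displayed identity in hand, the entrywise comparison from the first paragraph yields $U_{g(x)}=V_g$.

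The only genuine content is the case $k=1$ for an arbitrary polynomial; everything else is formal bookkeeping and induction. The subtle point, such as it is, lies in using all three hypotheses $[a,x]=h$, $[h,a]=0$, and $[h,x]=0$ correctly: the centrality of $h$ (coming from the latter two) is what allows the factors $h$ and $h^{k}$ to slide past powers of $x$ and to be ignored by $\operatorname{ad}_a$, and without $[h,a]=0$ the iterated brackets would fail to collapse cleanly into powers of $h$.
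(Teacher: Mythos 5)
Your proof is correct and follows essentially the same route as the paper: both reduce the matrix identity entrywise to the key formula $\operatorname{ad}_a^{k}\left(g(x)\right)=g^{(k)}(x)\cdot h^{k}$ (the paper's Corollary \ref{cor.deriv-ad}), prove the first-order case by reduction to monomials (the paper's Proposition \ref{prop.deriv1}), and then induct on $k$, using $[h,a]=0$ to let $\operatorname{ad}_a$ pass over the factor $h^{k}$ (the paper's Proposition \ref{prop.deriv.triv-comm}). The only difference is cosmetic: you organize the computations via the Leibniz rule for $\operatorname{ad}_a$, while the paper carries them out by direct commutator manipulation.
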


We shall thus mostly focus on proving Proposition \ref{prop.copeland-main}.

\subsection{How derivatives appear in commutators}

The main idea of our proof will be the following proposition, which relates
derivatives in $\mathbb{K}\left[  t\right]  $ to commutators in $\mathbb{L}$:

\begin{proposition}
\label{prop.deriv1}\textbf{(a)} We have $ax^{i}=x^{i}a+ix^{i-1}h$ for each
positive integer $i$.

\textbf{(b)} We have $a\cdot g\left(  x\right)  =g\left(  x\right)  \cdot
a+g^{\prime}\left(  x\right)  \cdot h$ for each $g\in\mathbb{K}\left[
t\right]  $.
\end{proposition}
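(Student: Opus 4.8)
The plan is to prove part (a) by induction on $i$ and then deduce part (b) by $\mathbb{K}$-linearity. The two facts I will lean on throughout are the rewriting $ax = xa + h$ (which is just $\left[a,x\right]=h$ unraveled) and the commutation $hx = xh$ (which is $\left[h,x\right]=0$ unraveled); notably, the third hypothesis $\left[h,a\right]=0$ will not be needed for this proposition.

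For part (a), I would argue by induction on $i$. The base case $i=1$ is the assertion $ax = xa + h = x^1 a + 1\cdot x^0 h$, which is exactly $\left[a,x\right]=h$. For the induction step, assuming $ax^i = x^i a + i x^{i-1} h$, I would write $ax^{i+1} = (ax^i)x = (x^i a + i x^{i-1} h)x = x^i(ax) + i x^{i-1}(hx)$. Then I substitute $ax = xa + h$ in the first summand to obtain $x^{i+1}a + x^i h$, and use $hx = xh$ in the second summand to turn $i x^{i-1}(hx)$ into $i x^{i-1}(xh) = i x^i h$. Adding these gives $x^{i+1}a + (i+1)x^i h$, which is precisely the claim for $i+1$, completing the induction.

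For part (b), I would exploit that both sides of the desired identity $a\cdot g(x) = g(x)\cdot a + g'(x)\cdot h$ are $\mathbb{K}$-linear in $g$: differentiation $g \mapsto g'$ is $\mathbb{K}$-linear, and evaluation at $x$ followed by left or right multiplication by a fixed element of $\mathbb{L}$ is $\mathbb{K}$-linear. Since the monomials $t^i$ (for $i \in \mathbb{N}$) span $\mathbb{K}[t]$ as a $\mathbb{K}$-module, it suffices to verify the identity for $g = t^i$. For $i \geq 1$ we have $g(x) = x^i$ and $g'(x) = i x^{i-1}$, so the claim reduces exactly to part (a). For $i = 0$ we have $g(x) = 1$ and $g'(x) = 0$, so the claim degenerates to the trivial equality $a = a$.

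The computation has no genuine obstacle; the only point requiring mild care is the reduction in part (b), where the constant monomial must be handled separately because part (a) is stated only for positive $i$. The remaining work is the routine bookkeeping of sliding the central element $h$ past a power of $x$ in the induction step.
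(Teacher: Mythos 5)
Your proposal is correct and follows essentially the same route as the paper: part (a) by induction on $i$ using $ax=xa+h$ and $hx=xh$ exactly as in the paper's induction step, and part (b) by reducing to monomials via $\mathbb{K}$-linearity, which is just the abstract phrasing of the paper's explicit expansion $g=\sum_{i=0}^{k}g_{i}t^{i}$ with the $i=0$ addend split off. Your observation that $\left[h,a\right]=0$ is never used also matches a remark the paper makes immediately after its proof.
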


Here, of course, $g^{\prime}$ means the derivative $\dfrac{d}{dt}g$ of the
polynomial $g$.

\begin{proof}
[Proof of Proposition \ref{prop.deriv1}.]The definition of $\left[
a,x\right]  $ yields $\left[  a,x\right]  =ax-xa$. Hence, $ax-xa=\left[
a,x\right]  =h$. Thus, $ax=xa+h$.

From $\left[  h,x\right]  =0$, we obtain $0=\left[  h,x\right]  =hx-xh$ (by
the definition of $\left[  h,x\right]  $). In other words, $hx=xh$.

\textbf{(a)} We shall prove Proposition \ref{prop.deriv1} \textbf{(a)} by
induction on $i$:

\textit{Induction base:} Comparing $a\underbrace{x^{1}}_{=x}=ax=xa+h$ with
$\underbrace{x^{1}}_{=x}a+1\underbrace{x^{1-1}}_{=x^{0}=1}h=xa+h$, we find
$ax^{1}=x^{1}a+1x^{1-1}h$. In other words, Proposition \ref{prop.deriv1}
\textbf{(a)} holds for $i=1$. This completes the induction base.

\textit{Induction step:} Let $n$ be a positive integer. Assume that
Proposition \ref{prop.deriv1} \textbf{(a)} holds for $i=n$. We must prove that
Proposition \ref{prop.deriv1} \textbf{(a)} holds for $i=n+1$.

We have assumed that Proposition \ref{prop.deriv1} \textbf{(a)} holds for
$i=n$. In other words, we have $ax^{n}=x^{n}a+nx^{n-1}h$.

Now,
\begin{align*}
a\underbrace{x^{n+1}}_{=x^{n}x}  &  =\underbrace{ax^{n}}_{=x^{n}a+nx^{n-1}%
h}x=\left(  x^{n}a+nx^{n-1}h\right)  x=x^{n}\underbrace{ax}_{=xa+h}%
+nx^{n-1}\underbrace{hx}_{=xh}\\
&  =\underbrace{x^{n}\left(  xa+h\right)  }_{=x^{n}xa+x^{n}h}%
+n\underbrace{x^{n-1}x}_{=x^{n}}h=\underbrace{x^{n}x}_{=x^{n+1}}%
a+\underbrace{x^{n}h+nx^{n}h}_{=\left(  n+1\right)  x^{n}h}\\
&  =x^{n+1}a+\left(  n+1\right)  \underbrace{x^{n}}_{=x^{\left(  n+1\right)
-1}}h=x^{n+1}a+\left(  n+1\right)  x^{\left(  n+1\right)  -1}h.
\end{align*}
In other words, Proposition \ref{prop.deriv1} \textbf{(a)} holds for $i=n+1$.
This completes the induction step. Thus, Proposition \ref{prop.deriv1}
\textbf{(a)} is proven by induction.

\textbf{(b)} Let $g\in\mathbb{K}\left[  t\right]  $. Write the polynomial $g$
in the form $g=\sum_{i=0}^{k}g_{i}t^{i}$ for some $k\in\mathbb{N}$ and some
$g_{0},g_{1},\ldots,g_{k}\in\mathbb{K}$. Thus, the definition of the
derivative $g^{\prime}$ yields $g^{\prime}=\sum_{i=1}^{k}ig_{i}t^{i-1}$.
Substituting $x$ for $t$ in this equality, we find%
\begin{equation}
g^{\prime}\left(  x\right)  =\sum_{i=1}^{k}\underbrace{ig_{i}}_{=g_{i}%
i}x^{i-1}=\sum_{i=1}^{k}g_{i}ix^{i-1}. \label{pf.prop.deriv1.b.g'(x)}%
\end{equation}

Substituting $x$ for $t$ in the equality $g=\sum_{i=0}^{k}g_{i}t^{i}$, we
obtain $g\left(  x\right)  =\sum_{i=0}^{k}g_{i}x^{i}$. Hence,%
\begin{align*}
a\cdot\underbrace{g\left(  x\right)  }_{=\sum_{i=0}^{k}g_{i}x^{i}}  &
=a\cdot\sum_{i=0}^{k}g_{i}x^{i}=\sum_{i=0}^{k}g_{i}ax^{i}=g_{0}%
a\underbrace{x^{0}}_{=1}+\sum_{i=1}^{k}g_{i}\underbrace{ax^{i}}%
_{\substack{=x^{i}a+ix^{i-1}h\\\text{(by Proposition \ref{prop.deriv1}
\textbf{(a)})}}}\\
&  \ \ \ \ \ \ \ \ \ \ \left(  \text{here, we have split off the addend for
}i=0\text{ from the sum}\right) \\
&  =g_{0}a+\underbrace{\sum_{i=1}^{k}g_{i}\left(  x^{i}a+ix^{i-1}h\right)
}_{=\sum_{i=1}^{k}g_{i}x^{i}a+\sum_{i=1}^{k}g_{i}ix^{i-1}h}=g_{0}a+\sum
_{i=1}^{k}g_{i}x^{i}a+\sum_{i=1}^{k}g_{i}ix^{i-1}h.
\end{align*}
Comparing this with%
\begin{align*}
&  \underbrace{g\left(  x\right)  }_{=\sum_{i=0}^{k}g_{i}x^{i}}\cdot
a+\underbrace{g^{\prime}\left(  x\right)  }_{\substack{=\sum_{i=1}^{k}%
g_{i}ix^{i-1}\\\text{(by (\ref{pf.prop.deriv1.b.g'(x)}))}}}\cdot h\\
&  =\left(  \sum_{i=0}^{k}g_{i}x^{i}\right)  \cdot a+\left(  \sum_{i=1}%
^{k}g_{i}ix^{i-1}\right)  \cdot h=\underbrace{\sum_{i=0}^{k}g_{i}x^{i}%
a}_{\substack{=g_{0}x^{0}a+\sum_{i=1}^{k}g_{i}x^{i}a\\\text{(here, we have
split off the}\\\text{addend for }i=0\text{ from the sum)}}}+\sum_{i=1}%
^{k}g_{i}ix^{i-1}h\\
&  =g_{0}\underbrace{x^{0}}_{=1}a+\sum_{i=1}^{k}g_{i}x^{i}a+\sum_{i=1}%
^{k}g_{i}ix^{i-1}h=g_{0}a+\sum_{i=1}^{k}g_{i}x^{i}a+\sum_{i=1}^{k}%
g_{i}ix^{i-1}h,
\end{align*}
we obtain $a\cdot g\left(  x\right)  =g\left(  x\right)  \cdot a+g^{\prime
}\left(  x\right)  \cdot h$. This proves Proposition \ref{prop.deriv1}
\textbf{(b)}.
\end{proof}

Note that we have not used the condition $\left[  h,a\right]  =0$ in
Proposition \ref{prop.deriv1}; but we will use it now:

\begin{proposition}
\label{prop.deriv.triv-comm}Let $b\in\mathbb{L}$. Then, $\operatorname*{ad}%
\nolimits_{a}\left(  bh^{i}\right)  =\operatorname*{ad}\nolimits_{a}\left(
b\right)  \cdot h^{i}$ for each $i\in\mathbb{N}$.
\end{proposition}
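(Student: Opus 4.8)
The plan is to unfold both sides into explicit commutators and reduce the claim to the single fact that $a$ commutes with every power of $h$. By the definition of $\operatorname*{ad}\nolimits_{a}$, the left-hand side is $\operatorname*{ad}\nolimits_{a}\left( bh^{i}\right) =a\cdot bh^{i}-bh^{i}\cdot a$, whereas the right-hand side is $\operatorname*{ad}\nolimits_{a}\left( b\right) \cdot h^{i}=\left[ a,b\right] \cdot h^{i}=\left( ab-ba\right) h^{i}=abh^{i}-bah^{i}$. Comparing the two, the asserted equality is equivalent to $bh^{i}a=bah^{i}$, i.e., to the commutation relation $ah^{i}=h^{i}a$. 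Thus the entire content of the proposition boils down to showing that $a$ commutes with $h^{i}$.

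So the one ingredient I actually need is $ah^{i}=h^{i}a$, which I would extract from the hypothesis $\left[ h,a\right] =0$. The definition of the commutator gives $ha-ah=\left[ h,a\right] =0$, hence $ha=ah$. From this I would deduce $ah^{i}=h^{i}a$ for all $i\in\mathbb{N}$ by a short induction on $i$: the base case $i=0$ reads $a=a$, and the induction step computes $ah^{i+1}=\left( ah^{i}\right) h=\left( h^{i}a\right) h=h^{i}\left( ah\right) =h^{i}\left( ha\right) =h^{i+1}a$, using the induction hypothesis and then $ah=ha$. This induction is the only moving part, and it is entirely routine.

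With $ah^{i}=h^{i}a$ established, I would finish by direct computation: $\operatorname*{ad}\nolimits_{a}\left( bh^{i}\right) =abh^{i}-b\left( h^{i}a\right) =abh^{i}-b\left( ah^{i}\right) =abh^{i}-bah^{i}=\left( ab-ba\right) h^{i}=\left[ a,b\right] h^{i}=\operatorname*{ad}\nolimits_{a}\left( b\right) \cdot h^{i}$, as desired. I do not anticipate any genuine obstacle here; the only point worth flagging is that the hypotheses $\left[ a,x\right] =h$ and $\left[ h,x\right] =0$ play no role in this proposition, and that it is precisely the condition $\left[ h,a\right] =0$ (which, as the text notes, was \emph{not} used in Proposition \ref{prop.deriv1}) that does all the work.
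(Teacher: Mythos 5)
Your proof is correct and matches the paper's own argument essentially step for step: both derive $ha=ah$ from $\left[h,a\right]=0$, extend it to $ah^{i}=h^{i}a$ by a routine induction, and then conclude by the same direct commutator computation. Your closing observation that only $\left[h,a\right]=0$ (and not the other two hypotheses) is used here is also consistent with the paper's remarks.
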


\begin{proof}
[Proof of Proposition \ref{prop.deriv.triv-comm}.]From $\left[  h,a\right]
=0$, we obtain $0=\left[  h,a\right]  =ha-ah$ (by the definition of $\left[
h,a\right]  $). In other words, $ha=ah$. Hence,%
\begin{equation}
h^{i}a=ah^{i}\ \ \ \ \ \ \ \ \ \ \text{for each }i\in\mathbb{N}\text{.}
\label{pf.prop.deriv.triv-comm.hia}%
\end{equation}

\begin{vershort}
[\textit{Proof of (\ref{pf.prop.deriv.triv-comm.hia}):} This follows by
induction on $i$, using $ha=ah$ in the induction step.]
\end{vershort}

\begin{verlong}
[\textit{Proof of (\ref{pf.prop.deriv.triv-comm.hia}):} We shall prove
(\ref{pf.prop.deriv.triv-comm.hia}) by induction on $i$:

\textit{Induction base:} Comparing $\underbrace{h^{0}}_{=1}a=a$ with
$a\underbrace{h^{0}}_{=1}=a$, we obtain $h^{0}a=ah^{0}$. In other words,
(\ref{pf.prop.deriv.triv-comm.hia}) holds for $i=0$. This completes the
induction base.

\textit{Induction step:} Fix $j\in\mathbb{N}$. Assume that
(\ref{pf.prop.deriv.triv-comm.hia}) holds for $i=j$. We must prove that
(\ref{pf.prop.deriv.triv-comm.hia}) holds for $i=j+1$.

We have assumed that (\ref{pf.prop.deriv.triv-comm.hia}) holds for $i=j$. In
other words, we have $h^{j}a=ah^{j}$. Now,%
\[
\underbrace{h^{j+1}}_{=hh^{j}}a=h\underbrace{h^{j}a}_{=ah^{j}}=\underbrace{ha}%
_{=ah}h^{j}=a\underbrace{hh^{j}}_{=h^{j+1}}=ah^{j+1}.
\]
In other words, (\ref{pf.prop.deriv.triv-comm.hia}) holds for $i=j+1$. This
completes the induction step. Thus, (\ref{pf.prop.deriv.triv-comm.hia}) is
proven by induction.]
\end{verlong}

Now, let $i\in\mathbb{N}$. Then, the definition of $\operatorname*{ad}%
\nolimits_{a}$ yields%
\[
\operatorname*{ad}\nolimits_{a}\left(  b\right)  =\left[  a,b\right]
=ab-ba\ \ \ \ \ \ \ \ \ \ \left(  \text{by the definition of }\left[
a,b\right]  \right)  .
\]
But the definition of $\operatorname*{ad}\nolimits_{a}$ also yields%
\begin{align*}
\operatorname*{ad}\nolimits_{a}\left(  bh^{i}\right)   &  =\left[
a,bh^{i}\right]  =a\left(  bh^{i}\right)  -\left(  bh^{i}\right)
a\ \ \ \ \ \ \ \ \ \ \left(  \text{by the definition of }\left[
a,bh^{i}\right]  \right) \\
&  =abh^{i}-b\underbrace{h^{i}a}_{\substack{=ah^{i}\\\text{(by
(\ref{pf.prop.deriv.triv-comm.hia}))}}}=abh^{i}-bah^{i}=\underbrace{\left(
ab-ba\right)  }_{=\operatorname*{ad}\nolimits_{a}\left(  b\right)  }h^{i}\\
&  =\operatorname*{ad}\nolimits_{a}\left(  b\right)  \cdot h^{i}.
\end{align*}
This proves Proposition \ref{prop.deriv.triv-comm}.
\end{proof}

\begin{corollary}
\label{cor.deriv-ad}Let $g\in\mathbb{K}\left[  t\right]  $. Let $p\in
\mathbb{N}$. Then,%
\[
\operatorname*{ad}\nolimits_{a}^{p}\left(  g\left(  x\right)  \right)
=g^{\left(  p\right)  }\left(  x\right)  \cdot h^{p}.
\]

\end{corollary}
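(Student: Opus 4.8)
The plan is to prove the claim by induction on $p$, reducing everything to the single commutator formula supplied by Proposition \ref{prop.deriv1} \textbf{(b)}. Observe first that Proposition \ref{prop.deriv1} \textbf{(b)}, applied to an arbitrary polynomial $f\in\mathbb{K}\left[t\right]$, says $a\cdot f\left(x\right)=f\left(x\right)\cdot a+f^{\prime}\left(x\right)\cdot h$, which upon subtracting $f\left(x\right)\cdot a$ and invoking the definition of $\operatorname*{ad}\nolimits_{a}$ becomes $\operatorname*{ad}\nolimits_{a}\left(f\left(x\right)\right)=f^{\prime}\left(x\right)\cdot h=f^{\left(1\right)}\left(x\right)\cdot h^{1}$. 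Taking $f=g$ here already settles the case $p=1$, and the case $p=0$ is the triviality $\operatorname*{ad}\nolimits_{a}^{0}\left(g\left(x\right)\right)=g\left(x\right)=g^{\left(0\right)}\left(x\right)\cdot h^{0}$.

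For the induction step I would assume the claim for some $p\in\mathbb{N}$, i.e. $\operatorname*{ad}\nolimits_{a}^{p}\left(g\left(x\right)\right)=g^{\left(p\right)}\left(x\right)\cdot h^{p}$, and compute $\operatorname*{ad}\nolimits_{a}^{p+1}\left(g\left(x\right)\right)=\operatorname*{ad}\nolimits_{a}\left(\operatorname*{ad}\nolimits_{a}^{p}\left(g\left(x\right)\right)\right)=\operatorname*{ad}\nolimits_{a}\left(g^{\left(p\right)}\left(x\right)\cdot h^{p}\right)$. The factor $h^{p}$ is the only obstruction, and it is exactly what Proposition \ref{prop.deriv.triv-comm} is designed to remove: applying it with $b=g^{\left(p\right)}\left(x\right)$ and $i=p$ gives $\operatorname*{ad}\nolimits_{a}\left(g^{\left(p\right)}\left(x\right)\cdot h^{p}\right)=\operatorname*{ad}\nolimits_{a}\left(g^{\left(p\right)}\left(x\right)\right)\cdot h^{p}$.

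Now $g^{\left(p\right)}$ is itself a polynomial in $\mathbb{K}\left[t\right]$, so the single-step formula from the first paragraph applies to $f=g^{\left(p\right)}$ and yields $\operatorname*{ad}\nolimits_{a}\left(g^{\left(p\right)}\left(x\right)\right)=\left(g^{\left(p\right)}\right)^{\prime}\left(x\right)\cdot h=g^{\left(p+1\right)}\left(x\right)\cdot h$, using $\left(g^{\left(p\right)}\right)^{\prime}=g^{\left(p+1\right)}$. Substituting back gives $\operatorname*{ad}\nolimits_{a}^{p+1}\left(g\left(x\right)\right)=g^{\left(p+1\right)}\left(x\right)\cdot h\cdot h^{p}=g^{\left(p+1\right)}\left(x\right)\cdot h^{p+1}$, completing the induction. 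The only point requiring care — and the closest thing to an obstacle — is the bookkeeping that the base formula must be reapplied to the \emph{derivative} $g^{\left(p\right)}$ rather than to $g$, so that the order of differentiation is correctly incremented; the commuting hypothesis $\left[h,a\right]=0$ (needed for Proposition \ref{prop.deriv.triv-comm}) does all the real work of pulling $h^{p}$ past $\operatorname*{ad}\nolimits_{a}$.
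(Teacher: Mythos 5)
Your proof is correct and follows essentially the same route as the paper's own: induction on $p$, using Proposition \ref{prop.deriv.triv-comm} to pull $h^{p}$ out of $\operatorname*{ad}\nolimits_{a}$ and Proposition \ref{prop.deriv1} \textbf{(b)} (applied to $g^{\left(p\right)}$) to produce the next derivative. The only cosmetic difference is that you package the single-commutator identity $\operatorname*{ad}\nolimits_{a}\left(f\left(x\right)\right)=f^{\prime}\left(x\right)\cdot h$ as a standalone lemma before the induction, whereas the paper derives it inline during the induction step.
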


\begin{proof}
[Proof of Corollary \ref{cor.deriv-ad}.]We shall prove Corollary
\ref{cor.deriv-ad} by induction on $p$:

\textit{Induction base:} Comparing $\underbrace{\operatorname*{ad}%
\nolimits_{a}^{0}}_{=\operatorname*{id}}\left(  g\left(  x\right)  \right)
=\operatorname*{id}\left(  g\left(  x\right)  \right)  =g\left(  x\right)  $
with $\underbrace{g^{\left(  0\right)  }}_{=g}\left(  x\right)  \cdot
\underbrace{h^{0}}_{=1}=g\left(  x\right)  $, we obtain $\operatorname*{ad}%
\nolimits_{a}^{0}\left(  g\left(  x\right)  \right)  =g^{\left(  0\right)
}\left(  x\right)  \cdot h^{0}$. In other words, Corollary \ref{cor.deriv-ad}
holds for $p=0$. This completes the induction base.

\textit{Induction step:} Let $n\in\mathbb{N}$. Assume that Corollary
\ref{cor.deriv-ad} holds for $p=n$. We must prove that Corollary
\ref{cor.deriv-ad} holds for $p=n+1$.

We have assumed that Corollary \ref{cor.deriv-ad} holds for $p=n$. In other
words, we have%
\[
\operatorname*{ad}\nolimits_{a}^{n}\left(  g\left(  x\right)  \right)
=g^{\left(  n\right)  }\left(  x\right)  \cdot h^{n}.
\]

Now,%
\begin{align}
\underbrace{\operatorname*{ad}\nolimits_{a}^{n+1}}_{=\operatorname*{ad}%
\nolimits_{a}\circ\operatorname*{ad}\nolimits_{a}^{n}}\left(  g\left(
x\right)  \right)   &  =\left(  \operatorname*{ad}\nolimits_{a}\circ
\operatorname*{ad}\nolimits_{a}^{n}\right)  \left(  g\left(  x\right)
\right)  =\operatorname*{ad}\nolimits_{a}\left(
\underbrace{\operatorname*{ad}\nolimits_{a}^{n}\left(  g\left(  x\right)
\right)  }_{=g^{\left(  n\right)  }\left(  x\right)  \cdot h^{n}}\right)
\nonumber\\
&  =\operatorname*{ad}\nolimits_{a}\left(  g^{\left(  n\right)  }\left(
x\right)  \cdot h^{n}\right)  =\operatorname*{ad}\nolimits_{a}\left(
g^{\left(  n\right)  }\left(  x\right)  \right)  \cdot h^{n}
\label{pf.cor.deriv-ad.2}%
\end{align}
(by Proposition \ref{prop.deriv.triv-comm}, applied to $b=g^{\left(  n\right)
}\left(  x\right)  $ and $i=n$).

But Proposition \ref{prop.deriv1} \textbf{(b)} (applied to $g^{\left(
n\right)  }$ instead of $g$) yields
\[
a\cdot g^{\left(  n\right)  }\left(  x\right)  =g^{\left(  n\right)  }\left(
x\right)  \cdot a+\left(  g^{\left(  n\right)  }\right)  ^{\prime}\left(
x\right)  \cdot h.
\]
In view of $\left(  g^{\left(  n\right)  }\right)  ^{\prime}=g^{\left(
n+1\right)  }$ (this follows from the definitions of $g^{\left(  n\right)  }$
and $g^{\left(  n+1\right)  }$), we can rewrite this as%
\begin{equation}
a\cdot g^{\left(  n\right)  }\left(  x\right)  =g^{\left(  n\right)  }\left(
x\right)  \cdot a+g^{\left(  n+1\right)  }\left(  x\right)  \cdot h.
\label{pf.cor.deriv-ad.4}%
\end{equation}
Now, the definition of $\operatorname*{ad}\nolimits_{a}$ yields%
\begin{align*}
\operatorname*{ad}\nolimits_{a}\left(  g^{\left(  n\right)  }\left(  x\right)
\right)   &  =\left[  a,g^{\left(  n\right)  }\left(  x\right)  \right]
=a\cdot g^{\left(  n\right)  }\left(  x\right)  -g^{\left(  n\right)  }\left(
x\right)  \cdot a\\
&  \ \ \ \ \ \ \ \ \ \ \left(  \text{by the definition of }\left[
a,g^{\left(  n\right)  }\left(  x\right)  \right]  \right) \\
&  =g^{\left(  n+1\right)  }\left(  x\right)  \cdot
h\ \ \ \ \ \ \ \ \ \ \left(  \text{by (\ref{pf.cor.deriv-ad.4})}\right)  .
\end{align*}
Hence, (\ref{pf.cor.deriv-ad.2}) becomes%
\[
\operatorname*{ad}\nolimits_{a}^{n+1}\left(  g\left(  x\right)  \right)
=\underbrace{\operatorname*{ad}\nolimits_{a}\left(  g^{\left(  n\right)
}\left(  x\right)  \right)  }_{=g^{\left(  n+1\right)  }\left(  x\right)
\cdot h}\cdot h^{n}=g^{\left(  n+1\right)  }\left(  x\right)  \cdot
\underbrace{h\cdot h^{n}}_{=h^{n+1}}=g^{\left(  n+1\right)  }\left(  x\right)
\cdot h^{n+1}.
\]
In other words, Corollary \ref{cor.deriv-ad} holds for $p=n+1$. This completes
the induction step. Thus, Corollary \ref{cor.deriv-ad} is proven by induction.
\end{proof}

\subsection{Proofs of Proposition \ref{prop.copeland-main} and Theorem
\ref{thm.copeland}}

\begin{proof}
[Proof of Proposition \ref{prop.copeland-main}.]If $b\in\mathbb{L}$ is
arbitrary, then%
\begin{equation}
\left(  U_{b}\right)  _{i,j}=%
\begin{cases}
\dbinom{i}{j}\operatorname*{ad}\nolimits_{a}^{i-j}\left(  b\right)  , &
\text{if }i\geq j;\\
0, & \text{if }i<j
\end{cases}
\label{pf.prop.copeland-main.Ubij=}%
\end{equation}
for each $i\in\left\{  0,1,\ldots,m-1\right\}  $ and $j\in\left\{
0,1,\ldots,m-1\right\}  $ (by (\ref{eq.Ub=})).

On the other hand, (\ref{eq.Vg=}) shows that%
\begin{equation}
\left(  V_{g}\right)  _{i,j}=%
\begin{cases}
\dbinom{i}{j}g^{\left(  i-j\right)  }\left(  x\right)  \cdot h^{i-j}, &
\text{if }i\geq j;\\
0, & \text{if }i<j
\end{cases}
\label{pf.prop.copeland-main.Vgij=}%
\end{equation}
for each $i\in\left\{  0,1,\ldots,m-1\right\}  $ and $j\in\left\{
0,1,\ldots,m-1\right\}  $.

Now, let us fix $i\in\left\{  0,1,\ldots,m-1\right\}  $ and $j\in\left\{
0,1,\ldots,m-1\right\}  $. We shall prove that $\left(  U_{g\left(  x\right)
}\right)  _{i,j}=\left(  V_{g}\right)  _{i,j}$.

Indeed, we are in one of the following two cases:

\textit{Case 1:} We have $i\geq j$.

\textit{Case 2:} We have $i<j$.

Let us first consider Case 1. In this case, we have $i\geq j$. Hence,
$i-j\in\mathbb{N}$. Thus, Corollary \ref{cor.deriv-ad} (applied to $p=i-j$)
yields
\[
\operatorname*{ad}\nolimits_{a}^{i-j}\left(  g\left(  x\right)  \right)
=g^{\left(  i-j\right)  }\left(  x\right)  \cdot h^{i-j}.
\]
Now, (\ref{pf.prop.copeland-main.Ubij=}) (applied to $b=g\left(  x\right)  $)
yields%
\begin{align*}
\left(  U_{g\left(  x\right)  }\right)  _{i,j}  &  =%
\begin{cases}
\dbinom{i}{j}\operatorname*{ad}\nolimits_{a}^{i-j}\left(  g\left(  x\right)
\right)  , & \text{if }i\geq j;\\
0, & \text{if }i<j
\end{cases}
=\dbinom{i}{j}\underbrace{\operatorname*{ad}\nolimits_{a}^{i-j}\left(
g\left(  x\right)  \right)  }_{=g^{\left(  i-j\right)  }\left(  x\right)
\cdot h^{i-j}}\ \ \ \ \ \ \ \ \ \ \left(  \text{since }i\geq j\right) \\
&  =\dbinom{i}{j}g^{\left(  i-j\right)  }\left(  x\right)  \cdot h^{i-j}.
\end{align*}
Comparing this with%
\begin{align*}
\left(  V_{g}\right)  _{i,j}  &  =%
\begin{cases}
\dbinom{i}{j}g^{\left(  i-j\right)  }\left(  x\right)  \cdot h^{i-j}, &
\text{if }i\geq j;\\
0, & \text{if }i<j
\end{cases}
\ \ \ \ \ \ \ \ \ \ \left(  \text{by (\ref{pf.prop.copeland-main.Vgij=}%
)}\right) \\
&  =\dbinom{i}{j}g^{\left(  i-j\right)  }\left(  x\right)  \cdot
h^{i-j}\ \ \ \ \ \ \ \ \ \ \left(  \text{since }i\geq j\right)  ,
\end{align*}
we obtain $\left(  U_{g\left(  x\right)  }\right)  _{i,j}=\left(
V_{g}\right)  _{i,j}$. Thus, $\left(  U_{g\left(  x\right)  }\right)
_{i,j}=\left(  V_{g}\right)  _{i,j}$ is proven in Case 1.

Let us next consider Case 2. In this case, we have $i<j$. Hence,
(\ref{pf.prop.copeland-main.Vgij=}) becomes%
\[
\left(  V_{g}\right)  _{i,j}=%
\begin{cases}
\dbinom{i}{j}g^{\left(  i-j\right)  }\left(  x\right)  \cdot h^{i-j}, &
\text{if }i\geq j;\\
0, & \text{if }i<j
\end{cases}
=0\ \ \ \ \ \ \ \ \ \ \left(  \text{since }i<j\right)  .
\]
But (\ref{pf.prop.copeland-main.Ubij=}) (applied to $b=g\left(  x\right)  $)
yields%
\[
\left(  U_{g\left(  x\right)  }\right)  _{i,j}=%
\begin{cases}
\dbinom{i}{j}\operatorname*{ad}\nolimits_{a}^{i-j}\left(  g\left(  x\right)
\right)  , & \text{if }i\geq j;\\
0, & \text{if }i<j
\end{cases}
=0\ \ \ \ \ \ \ \ \ \ \left(  \text{since }i<j\right)  .
\]
Comparing these two equalities, we find $\left(  U_{g\left(  x\right)
}\right)  _{i,j}=\left(  V_{g}\right)  _{i,j}$. Thus, $\left(  U_{g\left(
x\right)  }\right)  _{i,j}=\left(  V_{g}\right)  _{i,j}$ is proven in Case 2.

We have now proven the equality $\left(  U_{g\left(  x\right)  }\right)
_{i,j}=\left(  V_{g}\right)  _{i,j}$ in both Cases 1 and 2. Hence, this
equality always holds.

Now, forget that we fixed $i$ and $j$. We thus have shown that $\left(
U_{g\left(  x\right)  }\right)  _{i,j}=\left(  V_{g}\right)  _{i,j}$ for all
$i\in\left\{  0,1,\ldots,m-1\right\}  $ and $j\in\left\{  0,1,\ldots
,m-1\right\}  $. In other words, each entry of the $m\times m$-matrix
$U_{g\left(  x\right)  }$ equals the corresponding entry of the $m\times
m$-matrix $V_{g}$. Hence, $U_{g\left(  x\right)  }=V_{g}$. This proves
Proposition \ref{prop.copeland-main}.
\end{proof}

\begin{proof}
[Proof of Theorem \ref{thm.copeland}.]Theorem \ref{thm.gen} (applied to
$b=g\left(  x\right)  $) yields%
\[
\left(  g\left(  x\right)  \cdot a\right)  ^{n}=e_{0}^{T}\left(
\underbrace{U_{g\left(  x\right)  }}_{\substack{=V_{g}\\\text{(by Proposition
\ref{prop.copeland-main})}}}S\right)  ^{n}H_{1}=e_{0}^{T}\left(
V_{g}S\right)  ^{n}H_{1}.
\]
This proves Theorem \ref{thm.copeland}.
\end{proof}

\end{document}